\documentclass[12pt]{article}
\title{Matrix Dufresne identities}
%\date{Not for distribution}
\author{B. Rider\footnote{Department of Mathematics, Temple University, Brian.Rider@temple.edu}  \and  B. Valk\'o\footnote{Department of Mathematics, University of Wisconsin - Madison, valko@math.wisc.edu}}
\oddsidemargin 0in \topmargin 0in \headheight 0in \headsep 0in
\textheight 9in \textwidth 6.5in

\usepackage{amsfonts,color}
\usepackage{graphicx}
\usepackage{amsmath}
\usepackage{amsthm}
\usepackage{amssymb}
\usepackage{mathrsfs}
\usepackage{euscript}

\RequirePackage{color}
\RequirePackage[colorlinks,urlcolor=my-blue,linkcolor=my-red,citecolor=my-green]{hyperref}
\definecolor{my-blue}{rgb}{0.0,0.0,0.6}
\definecolor{my-red}{rgb}{0.5,0.0,0.0}
\definecolor{my-green}{rgb}{0.0,0.5,0.0}

    \newtheorem{theorem}{Theorem}
    \newtheorem{lemma}[theorem]{Lemma}
    \newtheorem{proposition}[theorem]{Proposition}
    \newtheorem{corollary}[theorem]{Corollary}

\theoremstyle{definition} % For roman text in the body
    
    \newtheorem{remark}[theorem]{Remark}

\newcommand{\R}{{\mathbb R}}
\newcommand{\CC}{{\mathbb C}}

\newcommand{\E}{{\mathbb E}}
\newcommand{\ev}{{\rm   E}}
\newcommand{\pr}{\mbox{\rm P}}
\newcommand{\lstar}{{\raise-0.15ex\hbox{$\scriptstyle \ast$}}}

\newcommand{\vect}{\text{vect}}
\theoremstyle{remark} % For an italic header, more subtle than definition style

%matrix variables

\newcommand{\ind}{{\bf 1}}
\newcommand{\cB}{\mathcal{B}}

\newcommand{\Tr}{{\rm{tr} }}
\def\eqd{\stackrel{\mathrm{(law)}}{=}}

\bibliographystyle{plain}

\begin{document}

\maketitle

\begin{abstract}
We prove a version of the classical Dufresne identity for matrix processes. More specifically, we show that  the 
inverse Wishart laws on the space of positive definite $r \times r$ matrices can be realized by 
$ \int_0^{\infty} M_s^{} M_s^{T} ds $ in which
$t \mapsto M_t$ is a drifted Brownian motion on $GL_r(\mathbb{R})$. 
This solves a  problem  in the study of spiked random matrix ensembles which served as the original motivation
for this result.
Various known extensions of the Dufresne identity (and their applications) are also shown to have analogs in 
this setting. In particular, we identify matrix valued diffusions built from $M_t$ which generalize in a natural way the scalar processes
figuring  into the 
geometric  L\'evy and Pitman theorems of Matsumoto and Yor.

\end{abstract}

%\tableofcontents

\section{Introduction}

For $t \mapsto b_t$ a standard Brownian motion denote the associated geometric Brownian motion with drift, along with its (square) running integral by 
\begin{equation}
\label{expBM}
m_t =m_t^{(\mu)}=e^{ b_t + \mu t}, \qquad a_t^{(\mu)} = \int_0^t m_s^2 ds.
\end{equation}
We will use the convention that $\mu > 0$, with the choice of sign in the superscript of $a_t^{(\pm \mu)}$ reserved to produce an integral of $ (m_t)^2 = (m_t^{(\pm \mu)})^2$ either converging or diverging (almost surely) as $t \rightarrow \infty$. In certain situations (if it does not cause confusion), we will not denote the dependence on $\mu$ explicitly. 

The functional $a_t^{(\mu)}$  arises in a number of contexts including mathematical finance, diffusions in random environment,  Brownian motion on hyperbolic spaces, and continuum models of $1+1$ dimensional 
polymers (see \cite{MYReview} and references therein). Connected to the valuation of a certain perpetuity, Dufresne \cite{Duf1} established the fundamental identity in law,
\begin{equation}
\label{Dufresne1}
    a_{\infty}^{(-\mu)}  \eqd \frac{1}{2  \xi},
\end{equation}
in which $\xi$ has the $\mathrm{Gamma}(\mu)$ distribution, with density function $\frac{1}{\Gamma(\mu)} x^{\mu-1} e^{-x}$ on the positive half line. 

The Dufresne result provides one possible starting point to what is a vast collection of beautiful distributional identities for integrated geometric Brownian motion, much of which 
was pioneered by the work of  Matsumoto and Yor.
For instance, the following process level version of \eqref{Dufresne1} was proved in
  \cite{MY4}:
\begin{equation}
\label{Dufresne2} 
 \left\{ \frac{1}{a_t^{(-\mu)}} , \, t > 0 \right\} \eqd  \left\{ \frac{1}{a_t^{(\mu)}} +  \frac{1}{\tilde{a}_{\infty}^{(-\mu)}} , \, t > 0 \right\}.
\end{equation}
Here $\tilde{a}_{\infty}^{(-\mu)}$ is a copy of $a_{\infty}^{(-\mu)}$, independent of the original Brownian motion $b_t$. It is important to note that Dufresne himself had earlier established \eqref{Dufresne2} at fixed times in \cite{Duf2}. Further afield  extensions include ``geometric" versions of L\'evy's $M-X$ and Pitman's $2M-X$ theorem \cite{MY3}, a Brownian Burke property \cite{OconYor}, and the integrability of the O'Connell-Yor polymer model \cite{Oconnell1}.

Motivated by a problem in random matrix theory one of the authors and J.~Ram\'irez  were led to a conjectured Dufresne type identity for 
matrix processes \cite{RR2}. Here we prove that conjecture, and begin a program to extend the various results connected to the Dufresne identity to these  matrix processes. In Section \ref{sub:IntroDufresne} we state our matrix analogs of \eqref{Dufresne1} and \eqref{Dufresne2}. In Section \ref{sub:IntroXandZ} we introduce matrix diffusions which provide a possible generalization of those appearing in the just alluded to geometric L\'evy and Pitman theorems, and discuss their asymptotics and  intertwining properties.
Section \ref{sub:IntroBurke} states a partial Burke-type property for our matrix process. Finally in Section \ref{sub:IntroSpike}
we go back and describe the motivating spiked random matrix connection, and Section \ref{sub:IntroOpen} discusses some open problems and related results in the literature.

\subsection{Dufresne for matrix processes}
\label{sub:IntroDufresne}

The natural matrix extension $M_t=M_t^{(\mu)}$ of the geometric Brownian motion which arises in \cite{RR2} is defined by the $r \times r$ matrix 
It\^o equation
\begin{align} 
\label{Mt}
d M_t = M_t d B_t +(\tfrac12 + \mu)  M_t dt , \qquad  M_0=I, \quad t\ge 0
\end{align}
where $t \mapsto B_t$ is  the matrix valued Brownian motion comprised of independent standard Brownian motions 
$\{ b_{ij}(t) \}_{1 \le i,j \le r}$. 
Certainly this coincides with $m_t$ when $r=1$. 
%For $\mu=0$ the process  $G_t=M_t^{(0)}$ satisfies the Stratonovich equation $d G_t  = G_t \circ d B_t$ and for general $\mu$ we have  $M_t^{(\mu)} = e^{\mu t} G_t $. 
Note  that $M_t$ is rotational invariant: if $O$ is a fixed orthogonal matrix then $OM_tO^T$ has the same law as a process as $M_t$. 

As we will point out below in Section \ref{sec:DufresneBasic},  $M_t$ is almost surely invertible for all time and  for any $s>0$, the process $t\to M_s^{-1} M_{t+s}, t\ge 0$ has the same law as $M_t, t\ge 0$ and is independent of $\{ M_r, \, 0\le r \le s \}$.  Using the independent multiplicative increment property it is easy to extend $M_t$ for all $t\in \R$.   Either version of the  process may be referred to as the Brownian motion (with drift $\mu$) on the general linear group $GL_r$.

%
%
%Applying It\^o's formula one gets that $\det M_t$ is a geometric Brownian motion, and thus 
%$M_t$ is almost surely invertible for all time. 
%
%By the linearity of the sde (\ref{Mt}) it is immediate that for any $s>0$, the process $t\to M_s^{-1} M_{t+s}, t\ge 0$ has the same law as $M_t, t\ge 0$ and it is independent of $\{ M_r, \, r \le s \}$. Using the independent multiplicative increment property it is easy to extend $M_t$ for all $t\in \R$. Let $\tilde M_t, t\in [0,1]$ be an independent copy of $M_t, t\in [0,1]$ and for $t\in [-1,0)$ define $M_t=\tilde M_1^{-1} \tilde M_{1+t}$
%In other words, $M_t$ is a left invariant (multiplicative) Brownian motion on the general linear group $GL_r$. 

%We record the following statement about the growth of the norm of $M_t$, which will be used throughout the paper. It's proof %will be postponed to XXX. 

%\begin{lemma}\label{normlemma0}
%Fix $\mu\in \R$ and consider the process $M_t=M_t^{(\mu)}$. Then for any matrix norm $\|\cdot \|$ we have the following a.s.~limit: 
%\begin{align}
%\lim_{t\to \infty} \frac{1}{t} \log \| M_t\|=\mu+\frac{r-1}{2}.
%\end{align}
%\end{lemma}

%
%
%
%
%

Along with $M_t$ we also define the additive functional $A_t^{(\mu)} = \int_0^t M_s^{} M_s^T ds $ which is the matrix analog of the running integral $a_t^{(\mu)}$ from (\ref{expBM}).  Our basic matrix Dufresne identity is the following.

\begin{theorem}
\label{thm:MatrixDufresne}
If   $2\mu>r-1$, the $r \times r$ random matrix
\begin{equation}
\label{MatrixDufresne}
\displaystyle A_{\infty}^{(-\mu)} =  \int_0^\infty  M_s  M_s^T ds
\end{equation}
has the standard inverse Wishart distribution with parameter $2\mu$. 
%Note that here $M_t=M_t^{(-\mu)}$. 
%The density is given by $g_\mu(x)=C \det(\xi)^{-\mu-\frac{r+1}{2}} e^{-\frac12 \Tr \xi^{-1}} $. 
\end{theorem}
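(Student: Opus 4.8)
The plan is to exploit the multiplicative increment structure of $M_t$ to derive a fixed-point (stochastic recursion) for the law of $A_\infty^{(-\mu)}$, then identify that law as the inverse Wishart by matching it against a known characterization. First I would record the key decomposition: for any fixed $s>0$, writing $M_{t+s} = M_s (M_s^{-1}M_{t+s})$ and using that $\widetilde M_u := M_s^{-1}M_{t+s}$ (with $u=t$) is an independent copy of the process $M_\cdot$, we get
\begin{equation}
\label{eq:recA}
A_\infty^{(-\mu)} = \int_0^s M_u M_u^T\, du + M_s \Bigl( \int_0^\infty \widetilde M_u \widetilde M_u^T\, du \Bigr) M_s^T = \int_0^s M_u M_u^T\, du + M_s\, \widetilde A_\infty^{(-\mu)}\, M_s^T,
\end{equation}
where $\widetilde A_\infty^{(-\mu)}$ is an independent copy of $A_\infty^{(-\mu)}$. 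One must first check that $A_\infty^{(-\mu)}$ is almost surely finite and positive definite under $2\mu > r-1$; finiteness should follow from the scalar Dufresne bound applied to $\Tr(M_sM_s^T) = \|M_s\|_{HS}^2$ together with control of the top Lyapunov exponent of $M_t$ (which is negative when $\mu$ is large enough relative to $r$), and positive definiteness is immediate since $M_sM_s^T > 0$ for $s$ in any interval.

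Next I would pass to the generator / infinitesimal form of \eqref{eq:recA}. Applying It\^o's formula to the matrix SDE \eqref{Mt}, one computes the action of the generator $\mathcal{L}$ of $M_t$ on test functions of $M$, and then — using that $t\mapsto \int_0^t M_uM_u^T\,du$ is the additive functional driving the recursion — derives a second-order PDE (an eigenvalue-type equation) characterizing the Laplace transform $\phi(\Theta) = \E\, e^{-\Tr(\Theta A_\infty^{(-\mu)})}$ for $\Theta > 0$. Concretely, differentiating the identity $\phi(\Theta) = \E\bigl[ e^{-\Tr(\Theta \int_0^s M_uM_u^Tdu)} \phi(M_s^T\Theta M_s)\bigr]$ at $s=0$ yields a linear PDE for $\phi$ on the cone of positive definite matrices, invariant under the $O(r)$-conjugation action (by the rotational invariance of $M_t$ noted in the excerpt). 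The candidate answer is the Laplace transform of the standard inverse Wishart law with parameter $2\mu$, namely (up to normalization) $\phi(\Theta) \propto \det(\Theta)^{\mu - (r-1)/4}\, K(\text{something involving }\sqrt\Theta)$ expressed through matrix Bessel functions — or, more robustly, I would instead verify the recursion \eqref{eq:recA} directly at the level of the inverse Wishart: if $A \sim \mathrm{InvWishart}(2\mu)$ then $A^{-1} \sim \mathrm{Wishart}(2\mu)$, and one checks that $\bigl(\int_0^s M_uM_u^Tdu + M_s \widetilde A M_s^T\bigr)^{-1}$ with $\widetilde A^{-1}$ Wishart has again a Wishart inverse — a computation of Matsumoto–Yor ``matrix GIG'' type. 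Combining the recursion with uniqueness of its solution (the contraction comes from the negative Lyapunov exponent, which forces $M_s \to 0$ and hence pins down the fixed point) gives the result.

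Alternatively, and perhaps more transparently, I would reduce to the scalar Dufresne identity \eqref{Dufresne1} via a one-dimensional projection combined with rotational invariance and induction on $r$. For a fixed unit vector $v\in\R^r$, the scalar process $t\mapsto |M_t^T v|^2$ does not by itself close, but the pair $(v^T M_t, \text{the orthogonal complement})$ does under a skew-product decomposition of Brownian motion on $GL_r$: writing $M_t$ in a suitable ``Iwasawa-type'' or QR form, the radial part is a product of scalar geometric Brownian motions whose running integrals telescope, and $v^T A_\infty^{(-\mu)} v$ then becomes a sum/mixture of independent scalar $a_\infty$-functionals with shifted drifts $\mu, \mu-\tfrac12, \dots, \mu-\tfrac{r-1}{2}$, matching exactly the stick-breaking / Bartlett decomposition of the inverse Wishart. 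Establishing that this decomposition holds jointly as a matrix (not merely in each fixed direction) is the delicate point.

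The main obstacle I anticipate is \emph{uniqueness} in the fixed-point argument — i.e.\ showing that \eqref{eq:recA} has only one solution among probability laws on the positive definite cone — since without a contraction estimate the recursion alone does not pin down the law. I would handle this by quantifying the decay $\E\|M_s\|^p \to 0$ (valid once $2\mu > r-1$, via the Lyapunov exponent spectrum of the $GL_r$ Brownian motion, which can be read off from the radial part), giving a genuine contraction on an appropriate Wasserstein metric and hence uniqueness. A secondary technical point is justifying the passage to the infinitesimal PDE (interchange of limits, finiteness of the relevant moments), which again rests on the same a priori integrability bounds for $M_t$ and $A_\infty^{(-\mu)}$.
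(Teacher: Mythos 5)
Your overall architecture is sound in outline: the decomposition $A_\infty^{(-\mu)} = \int_0^s M_uM_u^T\,du + M_s\,\widetilde A_\infty^{(-\mu)}\,M_s^T$ is correct, and your uniqueness argument works, since for $2\mu>r-1$ the top Lyapunov exponent $-\mu+\frac{r-1}{2}$ is negative (Lemma \ref{normlemma}), so $\|M_s\|\to 0$ a.s.\ and any distributional fixed point of the recursion must coincide with the law of $A_\infty^{(-\mu)}$; indeed this nicely sidesteps the uniqueness issues the paper handles by hypoellipticity (diffusion proof) or by a bounded-martingale argument (Feynman--Kac proof). The gap is in the step you treat as routine: \emph{verifying that the inverse Wishart law is a fixed point}. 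Your suggestion to ``verify the recursion directly at the level of the inverse Wishart'' by a Matsumoto--Yor/GIG-type computation cannot be carried out as stated, because for fixed $s>0$ the joint law of $\bigl(\int_0^s M_uM_u^T\,du,\,M_s\bigr)$ is not available in closed form (the known GIG/Wishart independence properties of Bernadac and Letac--Wesolowski concern independent random matrices with explicit densities, not this path functional). The feasible version is the infinitesimal one you also sketch: differentiating $\phi(\Theta)=\E\bigl[e^{-\Tr(\Theta\int_0^s M_uM_u^Tdu)}\phi(M_s^T\Theta M_s)\bigr]$ at $s=0$ gives exactly the PDE \eqref{KBessel2} of Theorem \ref{thm:MatrixBessel} (the process $s\mapsto M_s^T\Theta M_s$ does close under It\^o's formula). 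But then you must actually prove that the inverse Wishart Laplace transform, i.e.\ the matrix $K$-Bessel function $K_r(-\mu\,|\,\cdot\,,I)$, solves that PDE, and this is precisely the technical core of the paper's Feynman--Kac proof: a direct differentiation under the integral and integration by parts on the cone is cumbersome, and the paper instead runs a Mellin-transform computation on $\mathcal{P}$ (Steps 1--4 of Section \ref{sec:DufresneBasic}). Your proposal names the candidate answer but offers no method for this verification, so the identification of the fixed point --- which is the theorem --- is missing.

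Your fallback via one-dimensional projections also does not close the gap: the scalar Dufresne identity applied to $|M_t^Tv|^2$ (which is a geometric Brownian motion with drift $-(\mu-\frac{r-1}{2})$) does give the correct inverse-gamma law for each fixed quadratic form $v^TA_\infty^{(-\mu)}v$, but the family of laws of $v^TAv$ for fixed directions $v$ does not determine the joint matrix law, and the claimed Bartlett/stick-breaking structure with drifts $\mu,\mu-\tfrac12,\dots$ is not established; you acknowledge this as ``the delicate point,'' but it is the whole point. For comparison, the paper's two proofs supply exactly the ingredient you are missing in two different ways: the diffusion proof uses time reversal to build a genuinely Markov, stationary process $Q_t=N_t^{-1}\bigl(\int_{-\infty}^tN_sN_s^Tds\bigr)N_t^{-T}$ with marginal law that of $A_\infty^{(-\mu)}$ and checks that the $\gamma_{2\mu}^{-1}$ density is annihilated by the adjoint generator (plus boundary/integration-by-parts control), while the Feynman--Kac proof verifies the $K$-Bessel PDE via the Mellin transform. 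To complete your argument you would need to import one of these computations (the infinitesimal fixed-point identity is equivalent to them), at which point your Lyapunov-contraction uniqueness would give a clean finish.
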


As Lemma \ref{normlemma} below shows, $\lim_{t\to \infty} \frac{1}{t} \log \| M_t^{(-\mu)}\|= -\mu+\frac{r-1}{2}$ with  any matrix norm $\| \cdot \|$. The condition $2 \mu > r-1$ ensures that $A_\infty^{(-\mu)}$ is almost surely finite. That condition is also
necessary for the nondegeneracy of underlying Wishart distribution.

Recall that the standard  $r \times r$ (real) Wishart distribution with parameter $p > r-1$ is the law on the cone of symmetric positive definitive matrices 
${\mathcal{P}}$ prescribed by:\footnote{Since all matrix variables will reside in $GL_r$, we often omit the dependence on $r$ from the notation for the various distributions as well as their support, e.g., $\mathcal{P}$.}
\begin{equation}
\label{WishartLaw}
 \gamma_{p}(dX) = \frac{1}{\Gamma_r(p/2)} (\det X )^{   \frac{p-r-1}{2}} e^{-\frac12 \Tr X } \,  \ind_{\mathcal{P}}(X) dX.
 \end{equation}
 Here $\Gamma_r(p/2)$  the multivariate gamma function
 $
 \Gamma_r(p/2)=\pi^{\tfrac{r(r-1)}{4}} \prod_{k=1}^r \Gamma(\tfrac{p-k+1}{2}). 
 $
  When $p$ is also an integer $\gamma_{p}$ can be realized by the random sample covariance matrix $G G^T$ for $G$ an $ r \times p$ matrix with independent  standard normal entries. In either case it is the natural multivariate generalization of the gamma distribution.
 In symbols then Theorem \ref{thm:MatrixDufresne} reads $A_{\infty}^{(-\mu)} \eqd \gamma_{2 \mu}^{-1}$, the latter having density proportional to $   (\det X )^{ -\mu - \frac{r+1}{2}} e^{-\frac12 \Tr X^{-1} }$ on $\mathcal{P}$.  There are of course complex and quaternion Wishart distributions. Corollary \ref{cor:ComplexCase}  below provides a version of 
 Theorem \ref{thm:MatrixDufresne} for these settings.
 
The original  Dufresne identity \eqref{Dufresne1}  has a number of different proofs, not all of which appear extendable beyond the scalar case. To highlight those ideas that do carry over to the matrix case, we give two different proofs of Theorem \ref{thm:MatrixDufresne}. Both appear in Section \ref{sec:DufresneBasic}.
The first uses a inversion strategy employed by several authors.
The second mimics an argument of  Baudoin-O'Connell \cite{BaudOcon} which is  likely the most succinct proof of the one dimensional identity and which we briefly summarize now. 

Let  $y_t = y_0 e^{2b_t - 2 \mu t} $, which is to say that $y_t$ is  $m_t^2$ with a variable starting point and the convergent choice of the sign of $\mu$. The observation in \cite{BaudOcon} is that  
$$
  u(y) = \ev \left[ e^{-\frac{1}{2} \int_0^{\infty} y_t dt }  | y_0 = y \right] =     \ev  \left[ e^{-\frac{1}{2} y \int_0^{\infty} y_t dt }  | y_0 = 1 \right] = 
   \ev \left[ e^{-\frac{1}{2} y a_{\infty}^{(-\mu)}} \right], 
$$
is on one hand the Laplace transform of the desired distribution, and on the other, courtesy of Feyman-Kac, a solution of 
 \begin{equation}
\label{bessel1d}
    2 y  \frac{d}{dy} y  \frac{d}{dy} u(y) - 2\mu y \frac{d}{dx} u(y) - \frac{1}{2} y u(y) = 0, \qquad u(0) = 1.
\end{equation}
The unique bounded solution  of \eqref{bessel1d} is then shown to be $u(y) = \frac{2^{1-\mu}}{\Gamma(\mu)} y^{-\mu/2} K_{\mu} (\sqrt{y})$, where
\begin{equation}
 \label{Macdonald1d}
   K_{s}(a) = \frac{1}{2} \int_0^{\infty} x^{s-1} e^{-\frac12 a (x + \frac{1}{x} )} dx
\end{equation}
is the Macdonald function (or modified Bessel function of the second kind). After a change of variables $u$ is recognized as the Laplace transform of the (scaled) inverse gamma distribution.
 
 Bessel functions of a matrix argument first appear in the 1955 work of Herz \cite{Herz}, and we introduce what is effectively his ``$K$-Bessel" function: 
\begin{equation}
\label{KBessel1}
  K_r( s | A, B) =  \frac{1}{2} \int_{\mathcal{P}}  (\det X )^{s - \frac{r+1}{2} } e^{ - \frac12 {\Tr A} X - \frac12 {\Tr B} X^{-1} } \, dX,
 \end{equation}
 for $A,B \in P$.\footnote{Herz actually denotes what is effectively this function by $B_r$.  We follow more closely the notation of Terras  \cite[\S 4.2.2]{Terras}, where
 this is referred to as the $K$-Bessel function of the second kind. We choose a slightly different normalization  here (as in \cite{Letac}) by introducing the extra $1/2$ constants in the exponential term to better align with the standard $(r=1)$ Macdonald function.}
 Note this  reproduces the regular Macdonald function in the form 
 $K_1(s | a, b) = (ab)^{s/2} K_s(\sqrt{ab})$ upon setting  $r=1$ and $A,B = a, b \in \mathbb{R}_+$. Both functions are well defined for all $s \in \CC$. It is also clear that, up to a normalization, $K_{r} ( -\mu | A, I) $ is  the Laplace transform (in the variable $A$) of the $\gamma_{2\mu}^{-1}$ distribution.

Picking up on the basic idea in \cite{BaudOcon} we set $ Y_t =  M_t^{} M_t^T $ with $M_t = M_t^{(-\mu)}$ and $2 \mu> r-1$,
so that $A_{\infty}^{(-\mu)} = \int_0^{\infty} Y_t dt $.

\begin{theorem} 
\label{thm:MatrixBessel}
The process $t \mapsto Y_t  \in \mathcal{P}$ is Markovian with generator,
\begin{equation}
\label{YGenerator}
G_Y=2  {\Tr} (Y \frac{\partial}{\partial Y} )^2-2\mu {\Tr} (Y  \frac{\partial}{\partial Y} ), 
\end{equation}
 expressed here through the matrix-valued operator 
 $[\frac{\partial}{\partial Y} ]_{ij} = (\frac{1}{2} +  \frac{1}{2} \delta_{i,j} ) \frac{\partial}{\partial  Y_{ij}} $.
  
  Furthermore, for $2 \mu > r-1$
 the unique bounded  solution of
 \begin{equation}
 \label{KBessel2}
    G_Y U(Y) - \frac{1}{2} ( {\Tr }Y ) \, U(Y) = 0, \qquad U(0) = 1,\qquad Y\in \mathcal{P}
\end{equation}
is the normalized $K$-Bessel function $U(Y) = \frac{ K_{r} ( - \mu |  Y, I) }{2^{\mu-1} \Gamma_r(\mu)}$.
\end{theorem}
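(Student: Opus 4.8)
The plan is to prove the two assertions separately, the generator computation being essentially bookkeeping and the identification of the $K$-Bessel solution the substantive point. For the generator, I would apply It\^o's formula to $Y_t = M_tM_t^T$, using the convergent version $dM_t = M_t\,dB_t + (\tfrac12 - \mu)M_t\,dt$ of \eqref{Mt}. Writing $dB_t + dB_t^T$ for the symmetrized Brownian matrix built from $B_t$, and noting that the It\^o correction $M_t(dB_t\,dB_t^T)M_t^T$ equals $r\,Y_t\,dt$, one gets the closed It\^o equation
\[
dY_t = M_t\,(dB_t + dB_t^T)\,M_t^T + (r+1-2\mu)\,Y_t\,dt ,
\]
with the bracket of the martingale term computed to be $d\langle (Y_t)_{ij}, (Y_t)_{k\ell}\rangle = 2\big((Y_t)_{ik}(Y_t)_{j\ell} + (Y_t)_{i\ell}(Y_t)_{jk}\big)\,dt$. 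Since both the drift and the diffusion coefficient depend on the trajectory only through $Y_t$, the process $Y_t$ is a time-homogeneous diffusion on $\mathcal{P}$; reading off its generator in the ordinary partials $\partial/\partial Y_{ij}$ and re-expressing everything through the symmetrized derivative $[\tfrac{\partial}{\partial Y}]_{ij} = (\tfrac12 + \tfrac12\delta_{ij})\tfrac{\partial}{\partial Y_{ij}}$ then reproduces \eqref{YGenerator} — the extra first-order term generated on expanding $2\,\Tr(Y\tfrac{\partial}{\partial Y})^2$ combines with $-2\mu\,\Tr(Y\tfrac{\partial}{\partial Y})$ to account for the drift $(r+1-2\mu)Y$. (Alternatively, the Markov property alone is immediate from the independent multiplicative increments of $M_t$: $Y_{t+s} = M_s\,\widetilde Y_t\,M_s^T$ with $\widetilde Y_t$ an independent copy of $Y_t$, and the rotational invariance of the law of $Y_t$ lets one replace $M_s$ by the symmetric square root $Y_s^{1/2}$.)

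For the boundary value problem \eqref{KBessel2}, the plan is to check directly that $V(Y) := K_r(-\mu\,|\,Y,I)/(2^{\mu-1}\Gamma_r(\mu))$ is a bounded solution with $V(0)=1$, and separately that bounded solutions normalized to $1$ at the origin are unique. For existence I would differentiate under the integral sign in \eqref{KBessel1} with $X$ frozen: $G_Y$ then acts only on $e^{-\frac12\Tr(YX)}$, and since $\Tr(Y\tfrac{\partial}{\partial Y})$ acts on that factor as multiplication by $-\tfrac12\Tr(YX)$, the quantity $G_Y\,e^{-\frac12\Tr(YX)}$ is a polynomial in $X$ times $e^{-\frac12\Tr(YX)}$. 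The decisive step is an integration by parts in $X$ over the cone $\mathcal{P}$: using $\nabla_X\log\det X = X^{-1}$ and $\nabla_X\Tr X^{-1} = -X^{-2}$ one folds those polynomial prefactors into $X$-derivatives of the full integrand $(\det X)^{-\mu - \frac{r+1}{2}}e^{-\frac12\Tr(YX) - \frac12\Tr X^{-1}}$, the exponent $-\mu - \tfrac{r+1}{2}$ being exactly what makes the surviving bulk term equal $\tfrac12(\Tr Y)$ times the original integral; the boundary contributions vanish because $e^{-\frac12\Tr X^{-1}}$ controls the face $X\to 0$ and $e^{-\frac12\Tr(YX)}$ the face $X\to\infty$ (first for $Y$ in the interior of $\mathcal{P}$, then on the boundary by continuity and monotone convergence). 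I expect this integration by parts — the matrix counterpart of the reduction of \eqref{bessel1d} to the Macdonald ODE — to be the principal obstacle; keeping straight the interplay of the shift $\tfrac{r+1}{2}$, the parameter $-\mu$, and the numerical constants $2$ and $-2\mu$ in $G_Y$ is the delicate part.

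Boundedness and the value at the origin are comparatively routine. Since $0 \le e^{-\frac12\Tr(YX)} \le 1$ whenever $X, Y \succeq 0$, the integral $K_r(-\mu\,|\,Y,I)$ is nonincreasing in $Y$ in the positive-definite order and hence dominated by its value $K_r(-\mu\,|\,0,I)$; convergence of these integrals — in particular at $X=\infty$ when $Y$ is singular — is exactly what the hypothesis $2\mu > r-1$ buys. The substitution $X \mapsto X^{-1}$ identifies $K_r(-\mu\,|\,0,I)$ with the Wishart-type normalization integral $\tfrac12\int_{\mathcal{P}}(\det X)^{\mu - \frac{r+1}{2}}e^{-\frac12\Tr X}\,dX$, which fixes the constant $2^{\mu-1}\Gamma_r(\mu)$ so that $V(0) = 1$.

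For uniqueness I would argue probabilistically. If $U$ is any bounded solution of \eqref{KBessel2} (smooth in the interior of $\mathcal{P}$ by ellipticity of $G_Y$ there), then by It\^o's formula and the generator computation the process $U(Y_t)\exp(-\tfrac12\int_0^t\Tr Y_s\,ds)$ is a local martingale, and, being bounded since the exponential factor lies in $[0,1]$, a genuine martingale; hence $U(Y_0) = \E_{Y_0}\!\big[U(Y_t)\exp(-\tfrac12\int_0^t\Tr Y_s\,ds)\big]$ for every $t$. By Lemma \ref{normlemma}, $\|M_t^{(-\mu)}\| \to 0$, so $Y_t \to 0$ and $U(Y_t) \to U(0) = 1$, while the exponential factor is nonincreasing in $t$ and therefore converges almost surely. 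Bounded convergence then gives $U(Y_0) = \E_{Y_0}\!\big[\exp(-\tfrac12\int_0^\infty\Tr Y_s\,ds)\big]$, a quantity that does not depend on the particular solution; so $U \equiv V$, and in particular the $K$-Bessel function is the unique bounded solution. (The same representation, together with the multiplicative structure showing that $Y_t$ started from $LL^T$ equals $L\,Y_t^{(-\mu)}L^T$ in law, identifies $V(Y) = \E\big[e^{-\frac12\Tr(Y A_\infty^{(-\mu)})}\big]$, the Laplace transform of $A_\infty^{(-\mu)}$; this is the entry point for the Baudoin-O'Connell style second proof of Theorem \ref{thm:MatrixDufresne}.)
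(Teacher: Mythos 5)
Your generator computation and your uniqueness argument are correct and essentially identical to the paper's: the It\^o computation giving $dY_t = M_t(dB_t+dB_t^T)M_t^T + (r+1-2\mu)Y_t\,dt$ with bracket $d\langle Y_{ij},Y_{k\ell}\rangle = 2(Y_{ik}Y_{j\ell}+Y_{i\ell}Y_{jk})\,dt$, and the bounded-martingale identity $U(Y_0)=\E\bigl[U(Y_t)e^{-\frac12\int_0^t \Tr Y_s\,ds}\bigr]$ combined with Lemma \ref{normlemma} (so $Y_t\to 0$ and $U(Y_t)\to U(0)$) are exactly how the paper proceeds. The boundedness of $K_r(-\mu|\cdot,I)$ and the normalization at $Y=0$ are also handled as you describe.

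The gap is in the step you yourself flag as ``the principal obstacle'': showing that $K_r(-\mu|Y,I)$ actually solves \eqref{KBessel2}. Differentiating under the integral sign reduces this, as in the paper, to the identity
\[
0=\int_{\mathcal{P}}\bigl(\Tr(xyxy)+(2\mu-r-1)\Tr(xy)-\Tr(y)\bigr)\,(\det x)^{-\mu}e^{-\frac12\Tr(xy+x^{-1})}\,d\mu_r(x),
\]
and you propose to establish it by folding the polynomial prefactor into an exact $x$-divergence on the cone. You never exhibit that divergence, and the paper explicitly declines this route because ``integrating back up inside the integral appears cumbersome'': the term $\Tr(xyxy)$ is quadratic in $x$, so the required vector field is not supplied by $\nabla_X\log\det X=X^{-1}$ and $\nabla_X\Tr X^{-1}=-X^{-2}$ alone, and the assertion that the exponent $-\mu-\frac{r+1}{2}$ makes the surviving bulk term equal $\frac12\Tr Y$ times the integral is precisely the content that has to be proved. (In the scalar case a single exact derivative, $\frac{d}{dx}\bigl[-2y\,x^{1-\mu}e^{-\frac12 xy-\frac1{2x}}\bigr]$, does the job; the matrix generalization is not automatic.) The paper instead computes the Mellin transform on $\mathcal{P}$ of each of the three terms via the Cholesky substitutions $y=t^Tt$, $x=q^Tq$, obtains explicit multipliers $c_1,c_2,c_3$ of $\hat V(s)$, verifies $c_3+(2\mu-r-1)c_2-c_1=0$, and concludes by invertibility of the Mellin transform on rotation-invariant functions of $L^2(\mathcal{P},\mu_r)$. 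Until you either carry out the integration by parts in full or substitute an argument of this kind, the central claim of the theorem is asserted rather than proved.
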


Theorem \ref{thm:MatrixDufresne} then follows from considerations similar to those above:
$$
 \ev[  e^{-\frac{1}{2} \Tr (Y A_{\infty}^{(-\mu)} ) } ] = \ev [  e^{-\frac{1}{2} \Tr (Y  \int_0^{\infty} Y_t dt )} | Y_0 = I ] =
  \ev [  e^{-\frac{1}{2} \Tr ( \int_0^{\infty} Y_t dt )} | Y_0 = Y  ] = U(Y).
$$
The middle equality uses that $Y_t$ started from $Y \in \mathcal{P}$ is equal in law to $\sqrt{Y} Y_t \sqrt{Y}^T$, with now $Y_t$ started from the identity, along with the trace being cyclic. 

The theory of matrix Bessel functions has been developed considerably since \cite{Herz}, in part due to applications to multivariate statistics as well as to the harmonic analysis of symmetric spaces. See for example \cite{Muirhead} (particularly Chapter 7) and  \cite{Terras}, respectively. Both references include a number of differential operator  characterizations of various matrix Bessel functions. Still, the present characterization of $K_r(\cdot | A, I)$  appears  
new  despite the obvious similarities of \eqref{bessel1d} and \eqref{KBessel2}. 

\begin{remark} The process $Y_t$ (modulo drift) was previously studied in \cite{Norris} as one of two canonical ``Brownian motion on ellipsoids".
Its Markov property, along with that of its joint process of eigenvalues, was already remarked upon there. Because of the rotational invariance of $M_t$, the function  $U(Y)$ is actually determined by the eigenvalues $ \Lambda = \Lambda(Y) = \lambda_1 \ge \lambda_2 \ge \cdots  \ge \lambda_r$ alone.  The eigenvalue process has generator   
\begin{equation*}
\label{GLambda1}
   G_{\Lambda} =  \sum_{k=1}^r  ( 2 \lambda_k  \frac{\partial}{\partial \lambda_k}  \lambda_k \frac{\partial}{\partial \lambda_k } - (r-1 +2\mu) \lambda_k  \frac{\partial}{\partial \lambda_k } ) 
   + \sum_{k < \ell}  \frac{1}{\lambda_k - \lambda_\ell}   (  2 \lambda_k^2 \frac{\partial}{\partial \lambda_k }
                        -  2 \lambda_{\ell}^2 \frac{\partial}{\partial \lambda_\ell } ),
\end{equation*} 
and thus \eqref{KBessel2} can be expressed instead by $(G_{\Lambda} - \frac{1}{2} \sum_{k=1}^r \lambda_k ) U(\Lambda) = 0$.
\end{remark}

Last, we also have the exact matrix analog of the process level Dufresne identity \eqref{Dufresne2}.

\begin{theorem} 
\label{thm:ProcessDufresne}
There is the following identity in distribution:
\begin{align}
\label{eq:ProcessDufresne}
 \left\{  (A^{(\mu)}_t)^{-1}, \,  t \ge 0 \right\} \eqd  \left\{  (A_t^{(-\mu)})^{-1}-  (A_\infty^{(-\mu)} )^{-1},  \, t\ge 0 \right\}.
\end{align}
Again it is assumed that $2\mu > r-1$.
%where $\tilde Q_t=\int_0^t \tilde M_s \tilde M_s^T ds$ with $d\tilde M=\tilde M d\tilde B+(\mu+1/2)  \tilde M dt$,  
%$\tilde M_0=I$.\\
%Moreover, we can make this an a.s.~identity with 
%\[
%\tilde B_t=B_t-2\mu t I+\int_0^t  M_s^T (Q_\infty-Q_s)^{-1} M_s ds. 
%\]
\end{theorem}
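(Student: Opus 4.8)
The plan is to follow the three-step template behind the scalar identity \eqref{Dufresne2}: pass to a common driving Brownian motion, reduce the process-level statement to an identity between genuine matrix diffusions, and then carry out that identification by an intertwining (Doob $h$-transform) argument resting on the matrix Bessel function of Theorem \ref{thm:MatrixBessel}.

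First I would put both sides of \eqref{eq:ProcessDufresne} on one probability space. Checking \eqref{Mt} and using uniqueness gives $M^{(\mu)}_t = e^{2\mu t} M^{(-\mu)}_t$ when both are driven by the same matrix Brownian motion $B$; writing $M_t := M^{(-\mu)}_t$ and $Y_s := M_s M_s^T$, this yields $A^{(-\mu)}_t = \int_0^t Y_s\,ds$ and $A^{(\mu)}_t = \int_0^t e^{4\mu s} Y_s\,ds$. By the independent multiplicative-increment property together with Theorem \ref{thm:MatrixDufresne}, $A^{(-\mu)}_\infty = A^{(-\mu)}_t + M_t\Xi M_t^T$ with $\Xi$ independent of $\mathcal{F}_t = \sigma(M_s, s\le t)$ and $\Xi \eqd \gamma_{2\mu}^{-1}$; since $A^{(-\mu)}_\infty - A^{(-\mu)}_t = \int_t^\infty Y_s\,ds$, the resolvent identity $C^{-1} - (C+D)^{-1} = C^{-1}(C^{-1}+D^{-1})^{-1}C^{-1}$ gives the closed form
\[
R_t^{-1} := \big((A^{(-\mu)}_t)^{-1} - (A^{(-\mu)}_\infty)^{-1}\big)^{-1} = A^{(-\mu)}_t\,\big(A^{(-\mu)}_\infty - A^{(-\mu)}_t\big)^{-1} A^{(-\mu)}_\infty .
\]
Both $t\mapsto A^{(\mu)}_t$ and $t\mapsto R_t^{-1}$ are $C^1$ $\mathcal{P}$-valued processes with value $0$ at $t=0$; moreover $\tfrac{d}{dt}A^{(\mu)}_t = Y^{(\mu)}_t := M^{(\mu)}_t(M^{(\mu)}_t)^T$, and a short computation (with $G_t := I + R_t^{-1}(A^{(-\mu)}_\infty)^{-1} = A^{(-\mu)}_\infty(A^{(-\mu)}_\infty - A^{(-\mu)}_t)^{-1}$) shows $\tfrac{d}{dt}R_t^{-1} = G_t\,Y_t\,G_t^T$. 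Since a $C^1$ path started at $0$ is recovered from its derivative, \eqref{eq:ProcessDufresne} is (taking inverses and then derivatives) \emph{equivalent} to the equality in law of the derivative processes $\{\tfrac{d}{dt}R_t^{-1}\}_{t\ge 0} \eqd \{Y^{(\mu)}_t\}_{t\ge 0}$; and $Y^{(\mu)}$ is exactly the Markov process of Theorem \ref{thm:MatrixBessel} with $-\mu$ replaced by $+\mu$, so its (second order) generator $G^{(\mu)}_Y = 2\Tr(Y\tfrac{\partial}{\partial Y})^2 + 2\mu\Tr(Y\tfrac{\partial}{\partial Y})$ is known.

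It then suffices to identify $t\mapsto \tfrac{d}{dt}R_t^{-1}$ as a Markov process with generator $G^{(\mu)}_Y$ started at $0$. The triple $(Y_t, R_t^{-1}, A^{(-\mu)}_\infty)$ is Markov, and conditioning it on its current value \emph{freezes} the random limit $A^{(-\mu)}_\infty$; given that value, the remaining evolution of $Y = Y^{(-\mu)}$ is that of the diffusion of Theorem \ref{thm:MatrixBessel} \emph{conditioned on its total integral} $\int_0^\infty Y_s\,ds = A^{(-\mu)}_\infty$. This conditioning is a Doob $h$-transform whose $h$-function is proportional to the inverse-Wishart density, hence is built from $K_r(\,\cdot\mid A,I)$. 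Running the It\^o calculus for $d(G_t Y_t G_t^T)$ under the conditioned law, I expect the dependence on both $A^{(-\mu)}_\infty$ and $R_t^{-1}$ to cancel through matrix-Bessel identities --- the $K_r$-analogues of the Macdonald-function recurrences used to solve \eqref{KBessel2} --- leaving an autonomous SDE with generator $G^{(\mu)}_Y$. As a sanity check for the ``$A^{(-\mu)}_\infty$-part'' of the argument, note that an honest time reversal is available at any \emph{fixed} horizon $t$: the reversed path $\check M_u := M_t^{-1}M_{t-u}$, $0\le u\le t$, solves $d\check M = -\check M\,d\check B + (\tfrac12+\mu)\check M\,du$ for a matrix Brownian motion $\check B$, hence (as $-\check B \eqd \check B$) has the law of $M^{(\mu)}$ on $[0,t]$; substituting $s = t-u$ then gives $M_t^{-1}A^{(-\mu)}_t M_t^{-T} = \int_0^t \check M_u \check M_u^T\,du \eqd A^{(\mu)}_t$, the fixed-time shadow of the theorem.

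The step I expect to be the main obstacle is the intertwining just described: making rigorous the identification of ``$Y^{(-\mu)}$ conditioned on its running-to-infinity integral'' with a $K_r$-Doob transform of the diffusion of Theorem \ref{thm:MatrixBessel} --- including the boundary analysis at the two edges ($0$ and $\infty$) of the cone $\mathcal{P}$ needed for this to define a bona fide entrance-Markov process, and hence for the identity to hold at all finite-dimensional distributions rather than only at fixed times --- and then pushing through the matrix-Bessel identities that collapse the $d(G_t Y_t G_t^T)$ computation. In the scalar case all of this is (part of) the Matsumoto--Yor circle of results underlying \eqref{Dufresne2}; here it requires the analogous analysis of $K_r(\,\cdot\mid A,B)$, of independent interest. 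A more pedestrian alternative is to upgrade the fixed-horizon reversal above directly to the joint law at times $t_1<\cdots<t_n$ by tracking how the reversed paths at nested horizons splice together; but the non-Markovianity of $t\mapsto(A_t)^{-1}$ as a $\mathcal{P}$-valued process means even this requires care.
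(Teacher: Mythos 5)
Your architecture is in fact the paper's architecture: the algebraic identity
\[
\bigl((A_t^{(-\mu)})^{-1}-(A_\infty^{(-\mu)})^{-1}\bigr)^{-1}=\int_0^t N_sN_s^T\,ds,\qquad N_t=A_\infty(A_\infty-A_t)^{-1}M_t,
\]
which is your $\tfrac{d}{dt}R_t^{-1}=G_tY_tG_t^T$ rewritten as $N_tN_t^T$, is exactly Proposition \ref{claim:Inverses}; ``freeze $A_\infty$ and identify the conditioned dynamics'' is exactly the enlargement-of-filtration strategy of Proposition \ref{claim:enlarge}; and the representation $A_\infty=A_t+M_t\Xi M_t^T$ with $\Xi\sim\gamma_{2\mu}^{-1}$ independent of $\cB_t$, which you write down, is the paper's starting point. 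But the proof stops exactly where it has to start working. The assertion that ``the dependence on both $A_\infty^{(-\mu)}$ and $R_t^{-1}$ cancels through matrix-Bessel identities, leaving an autonomous SDE with generator $G_Y^{(\mu)}$'' \emph{is} the theorem; you explicitly defer it and flag it as the main obstacle. As written this is a correct reduction plus a plan, with the decisive step missing.

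The gap is closed without any $K_r$-identities, and the route is more elementary than the one you anticipate. From $A_\infty=A_t+M_t\Xi M_t^T$ the conditional density of $A_\infty$ given $\cB_t$ is explicit, namely $f(M_t^{-1}(A-A_t)M_t^{-T})(\det M_t)^{-(r+1)}$ with $f$ the $\gamma_{2\mu}^{-1}$ density (see \eqref{chanvar}); its ratio against the unconditional density is the explicit $\cB_t$-martingale \eqref{Radon}, whose stochastic logarithm $2\mu\Tr dB_t-\Tr[dB_t\,M_t^T(A-A_t)^{-1}M_t]$ identifies the finite-variation correction turning $B_t$ into the Brownian motion $\hat B_t$ of \eqref{BMhat}, adapted to $\cB_t\vee\sigma(A_\infty)$ and independent of $A_\infty$. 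A short It\^o computation on $N_t=A_\infty(A_\infty-A_t)^{-1}M_t$ then gives $dN_t=N_t\,d\hat B_t+(\tfrac12+\mu)N_t\,dt$, $N_0=I$: the drift correction cancels algebraically against the derivative of $(A_\infty-A_t)^{-1}$, so conditionally on $A_\infty$ the process $\int_0^t N_sN_s^T\,ds$ is a copy of $A_t^{(\mu)}$, and the conditioning washes out. Two further cautions about your framing: the conditional law of $A_\infty$ given $\cB_t$ depends on $(M_t,A_t)$, not on $Y_t$ alone, so the ``Doob $h$-transform of the $Y$-diffusion of Theorem \ref{thm:MatrixBessel}'' is not quite the right object --- the enlargement must be performed at the level of $M_t$ (or the pair $(Y_t,A_t)$); and your fixed-horizon time reversal, while correct, only yields the one-dimensional marginals, as you acknowledge.
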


Theorem \ref{thm:ProcessDufresne} is proved in Section \ref{sec:ProcessDufresne} by an enlargement of filtration argument, similar to the  proof  in \cite{MY4} for the scalar case.
Note the slightly different, but equivalent,  presentation of the identity compared with \eqref{Dufresne2}. In this form \eqref{eq:ProcessDufresne} can actually be made an almost sure identity by an appropriate construction of the 
underlying matrix Brownian motions (see  Proposition \ref{claim:enlarge}).

\subsection{Geometric L\'evy and Pitman theorems}
\label{sub:IntroXandZ}

Connected to their study of the functional $a_t^{(\mu)}$, Matsumoto-Yor introduced the pair of processes,
\begin{equation}
\label{xandz} 
x_t =  m_t^{-2} \int_0^t m_s^2 ds, \qquad z_t = m_t^{-1} \int_0^t m_s^2 ds, 
\end{equation}
(with $m_t=m_t^{(\mu)}$), both of which turn out to be diffusions \cite{MY1, MY2, MY3}. For $x_t$ the Markov property is immediate. An application of It\^o's formula
produces the following simple sde for $x_t$ for any $\mu\in \R$:
\begin{equation}
\label{xdiffusion}
  dx_t = 2 x_t db_t +  dt + (2 - 2 \mu) x_t  dt.
\end{equation}
Plainly, the same procedure applied to  $z_t$ cannot produce a closed equation. Nonetheless,  $z_t$ is a Markov process (for any $\mu\in \R$)
with law described by,
\begin{equation}
\label{zdiffusion}
  d z_t =  z_td \bar{b}_t + ( \frac{1}{2} - \mu) z_t  dt + \frac{K_{\mu+1}}{K_{\mu}} \left( \frac{1}{z_t} \right) dt.
\end{equation}
Here $K_{\mu}$ is the Macdonald function \eqref{Macdonald1d} and  $\bar{b}_t$ is a  new Brownian motion (the subtlety is explained momentarily). 
An important property of $z_t$ is its invariance under the transformation $\mu \mapsto -\mu$ which follows from 
 identity 
$K_{\mu-1}(a) = K_{\mu+1}(a) - (2 \mu /a) K_{\mu}(a)$ along with the more transparent fact $K_{\mu}(a) = K_{-\mu}(a)$.

The interest in $x_t$ and $z_t$ is that they encode generalizations of the classical $M-X$ theorem of L\'evy, as well as the $2M-X$ theorem of Pitman, as was discovered by Matsumoto-Yor \cite{MY1}. In particular, rescaling time by $c^2$ and 
taking $\mu$ into $\gamma/c$ yields:
$$
  x_{c^2t}^{\gamma/c}  \eqd c^2 \int_0^t e^{c(2 b_s^{\gamma} - 2b_t^{\gamma})} ds, \qquad z_{c^2 t}^{\gamma/c} \eqd  c^2 \int_0^t e^{c( 2 b_s^{\gamma} - b_t^{\gamma})} ds,
$$
in which $b_t^{\gamma}$ is shorthand for the drifted Brownian motion.  Simple Laplace asymptotics yield that, as $c \rightarrow \infty$,  $\tfrac{1}{2c} \log  \int_0^t e^{c(2 b_s^{\gamma} - 2b_t^{\gamma})} ds $ and $ \tfrac1{c} \log  \int_0^t e^{c( 2 b_s^{\gamma} - b_t^{\gamma})} ds$  converge pathwise to $ \max_{s< t} (b_s^{\gamma} - b_t^{\gamma})$ and $\max_{s < t} (2 b_s^{\gamma} - b_t^{\gamma})$. Working on the sde 
side  (that is, with \eqref{xdiffusion} and \eqref{zdiffusion}), shows that $\tfrac1{2c} \log  x_{c^2t}^{\gamma/c} $ and $\tfrac1{c} \log z_{c^2 t}^{\gamma/c}$
have  limiting processes that are equivalent in law to the diffusions with respective generators
\begin{equation}
\label{xzGenerators}
  G_x =   \frac{1}{2} \frac{d^2}{dx^2} - \gamma \, \mathrm{sgn}(x)  \frac{d}{dx}, \qquad G_z  = \frac{1}{2} \frac{d^2}{dz^2} + \gamma \cot (\gamma z) \frac{d}{dz}. 
\end{equation}
The former is understood to be equipped with a Neumann boundary condition at the origin: the limiting $x$-process is reflected at the origin while the $z$-process has an entrance boundary at that point. 
 
Letting $\gamma \downarrow 0$,  from the processes  \eqref{xzGenerators} we recover the reflected Brownian motion and the 3-d Bessel process occurring in the celebrated results of 
 L\'evy and Pitman identifying the distributions of the processes $t\to \max_{s< t} (b_s - b_t)$ and $t\to \max_{s< t} (2b_s - b_t)$. 
  Taking the point of view that `$\exp \int \log $' has replaced the running maximum, that the Brownian functionals in  \eqref{xandz} are diffusions is now  referred to as  the ``geometric" $M-X$ or $2M-X$ theorem.

Similar to the original Dufresne identity \eqref{Dufresne1}, there are various ways to identify the law of $z_t$ 
with \eqref{zdiffusion}.  The one relevant here is again due to Matsumoto-Yor \cite{MY3},  and rests on properties of the Generalized Inverse Gaussian (GIG) distribution. The GIG is a three-parameter distribution on the positive half line 
with density proportional to $x^{p-1} e^{-\frac{1}{2} a x -\frac{1}{2} b x^{-1}}$, with arbitrary $p$ and $a, b >0$. As such it is intimately connected to the Macdonald  function $K_p$:  the ratio  appearing in the drift \eqref{zdiffusion} being  the mean of a GIG with parameters $(\mu, 1/z_t, 1/z_t)$.  What is proved in \cite{MY3} is that the law of $m_t$ conditional on the field $\{z_s, s \le t\}$ is exactly this GIG, and the closed equation \eqref{zdiffusion} is produced by a projection (and so the indicated $\bar{b}_t$ is measurable with respect to $ \sigma(z_s, s \le t)$).

By analogy with \eqref{xandz}  we introduce
\begin{equation}
\label{XandZ}
   X_t = M_t^{-1} (  \int_0^t M_s^{} M_s^T \, ds  )  M_t^{-T}, \qquad Z_t = M_t^{-1} \int_0^t  M_s^{} M_s^T \, ds,
\end{equation}
for our matrix process $t \mapsto M_t$ as defined originally in \eqref{Mt}. Note that from now on we use the shorthanded  $M^{-T}=[M^T]^{-1}$. 
That $X_t$ is Markovian is again straightforward using It\^o's formula. For $Z_t$,
as can be anticipated at this point, there is a matrix GIG distribution on $\mathcal{P}$, more or less defined by having normalizer given by the $K$-Bessel function \eqref{KBessel1}. That is, it is the law
\begin{equation}
\label{matrixGIG}
  \eta_{p, A, B}(dX) = c (\det X)^{ p - \frac{1}{2}(r+1)} e^{-\frac{1}{2}( \Tr  A X + \Tr B X^{-1} )} \ind_{\mathcal{P}}(X) dX,
\end{equation}
with $A, B \in P$ and $c =  \frac{1}{2} K_r(p | A,B)^{-1}$.

\begin{theorem}
\label{thm:MatrixXandZ}
%Both  $X_t$ and  $Z_t$ are diffusions. 
For all $\mu$, the process $X_t$ is the diffusion defined by the It\^o equation
\begin{equation}
\label{Xprocess}
   dX_t =  I dt - 2 \mu X_t dt + \Tr X_t  I dt - dB_t X_t - X_t^{} dB_t^{T},
\end{equation}
run on the same Brownian motion $B_t$ is as  $M_t$ \eqref{Mt}.

If $|\mu|>\frac{r-1}{2}$ the process $Z_t$ is also a diffusion. It satisfies 
\begin{equation}
\label{Zdiffusion}
dZ_t = d \bar{B}_t Z_t +(\frac12 - \mu) Z_t dt+ \kappa_{\mu}( I,  (Z_t {Z_t}^T)^{-1}) Z_t  dt
\end{equation}
where now $\bar{B}_t $ is a matrix valued Brownian motion adapted to $\sigma( Z_s, s\le t)$ and 
  $\kappa_p(A,B)$ denotes the mean of the $\eta_{p, A, B}$ distribution \eqref{matrixGIG}. 
In addition,
the law of $t \mapsto Z_t$ 
is unchanged by taking $\mu$ into $-\mu$.
\end{theorem}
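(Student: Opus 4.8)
\emph{The $X_t$ equation.} The plan here is a direct computation with It\^o's formula. From \eqref{Mt} and the matrix It\^o rule $dB_t\,dB_t=I\,dt$ one first finds $d(M_t^{-1})=-\,dB_t\,M_t^{-1}+(\tfrac12-\mu)M_t^{-1}\,dt$. Substituting this, together with $d\!\int_0^t M_sM_s^T\,ds=M_tM_t^T\,dt$ and the bilinear rule $dB_t\,S\,dB_t^{T}=(\Tr S)\,I\,dt$ (for predictable symmetric $S$), into the product rule for $X_t=M_t^{-1}\big(\int_0^t M_sM_s^T\,ds\big)M_t^{-T}$ and collecting terms yields \eqref{Xprocess}. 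The coefficients of \eqref{Xprocess} are affine in the entries of $X$, hence globally Lipschitz, so the equation has a unique strong solution; as $M_t^{-1}\big(\int_0^t M_sM_s^T\,ds\big)M_t^{-T}$ provides a global solution, $X_t$ is precisely this diffusion.

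\emph{The $Z_t$ equation.} Here the same computation for $Z_t=M_t^{-1}\int_0^t M_sM_s^T\,ds$ only produces $dZ_t=-\,dB_t\,Z_t+(\tfrac12-\mu)Z_t\,dt+M_t^{T}\,dt$, which does not close because $M_t^{T}$ is not adapted to $\mathcal{F}^Z_t:=\sigma(Z_s, s\le t)$. Following \cite{MY3}, the plan is to first pin down the conditional law of $M_t$ given $\mathcal{F}^Z_t$ and then project the equation. Writing $A_t=\int_0^t M_sM_s^T\,ds$, the algebraic relations $A_t=M_tZ_t$, $M_t^{T}=X_t^{-1}Z_t$ and $X_t^{-1}=Z_t^{-T}A_tZ_t^{-1}$ reduce this to the matrix analog of the GIG conditional law underlying \eqref{zdiffusion}: \emph{conditionally on $\mathcal{F}^Z_t$, the matrix $X_t^{-1}$ has the matrix GIG distribution $\eta_{\mu,\,I,\,(Z_tZ_t^{T})^{-1}}$ of \eqref{matrixGIG}} (equivalently $A_t\mid\mathcal{F}^Z_t\sim\eta_{\mu,\,(Z_t^{T}Z_t)^{-1},\,I}$). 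To prove this I would mirror \cite{MY3}: use the independent multiplicative increments of $M_t$ from Section \ref{sec:DufresneBasic}, writing $A_{t+h}=M_tM_t^{T}+M_t\big(\int_0^{h}\widehat M_s\widehat M_s^{T}\,ds\big)M_t^{T}$ with $\widehat M$ an independent copy of $M$, and propagate the conditional law in $t$ using the matrix version of the Matsumoto--Yor / beta--gamma algebra for the matrix GIG family (cf.\ \cite{Letac}); the Wishart $\gamma_{2\mu}$ that enters that algebra is what forces $\mu>\tfrac{r-1}{2}$, and the range $\mu<-\tfrac{r-1}{2}$ is then supplied by the $\mu\mapsto-\mu$ symmetry proved below, while the degenerate start $(M_0,Z_0)=(I,0)$ is handled as in the scalar case by starting from a positive time and passing to the limit. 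Granting the conditional law, $\E[M_t^{T}\mid\mathcal{F}^Z_t]\,dt=\E[X_t^{-1}\mid\mathcal{F}^Z_t]\,Z_t\,dt=\kappa_\mu\big(I,(Z_tZ_t^{T})^{-1}\big)Z_t\,dt$, while the martingale part $-\!\int_0^{\cdot}dB_s\,Z_s$ has conditional covariation $\delta_{ii'}[Z_t^{T}Z_t]_{jj'}\,dt$ between its $(i,j)$ and $(i',j')$ entries, again by the bilinear rule; since $Z_t$ is invertible for $t>0$, L\'evy's characterization produces a matrix Brownian motion $\bar B_t$ adapted to $\mathcal{F}^Z_t$ with $-\,dB_t\,Z_t$ replaced by $d\bar B_t\,Z_t$. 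This is \eqref{Zdiffusion}, and the hypothesis $|\mu|>\tfrac{r-1}{2}$ is also what one uses to see that \eqref{Zdiffusion} is well posed near $\det Z_t=0$ (only a mild singularity of the $\kappa_\mu$-drift there), so that $Z_t$ is a diffusion.

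\emph{Invariance under $\mu\mapsto-\mu$.} I would reduce this to a Bessel-function identity. The second-order part of the generator of \eqref{Zdiffusion} does not depend on $\mu$, so it suffices to check that the $\mu$- and $(-\mu)$-drift vector fields coincide; the common starting point $Z_0=0$ together with well-posedness then gives equality in law. The two drifts differ by $-2\mu Z+\big[\kappa_\mu(I,B)-\kappa_{-\mu}(I,B)\big]Z$ with $B=(ZZ^{T})^{-1}$, so the claim amounts to $\kappa_\mu(I,B)-\kappa_{-\mu}(I,B)=2\mu\,I$ for every $B\in\mathcal{P}$. This follows from two elementary facts about Herz's $K$-Bessel function \eqref{KBessel1}: the inversion symmetry $K_r(s\mid A,B)=K_r(-s\mid B,A)$ (substitute $X\mapsto X^{-1}$) and the scaling law $K_r(s\mid GAG^{T},B)=(\det G)^{-2s}K_r(s\mid A,\,G^{T}BG)$ (substitute $X\mapsto G^{-T}XG^{-1}$), which together give $K_r(\mu\mid A,B)/K_r(-\mu\mid A,B)=(\det A)^{-\mu}(\det B)^{\mu}$. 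Since $\kappa_p(A,B)=-2\,\tfrac{\partial}{\partial A}\log K_r(p\mid A,B)$ (differentiate the normalizer of \eqref{matrixGIG}, in the matrix-derivative notation of Theorem \ref{thm:MatrixBessel}), differentiating $\log\!\big[(\det A)^{-\mu}(\det B)^{\mu}\big]$ in $A$ at $A=I$ gives the identity. This is the matrix counterpart of the scalar reasoning via $K_\mu=K_{-\mu}$ and the recurrence $K_{\mu-1}(a)=K_{\mu+1}(a)-(2\mu/a)K_\mu(a)$.

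\emph{Main obstacle.} The one substantive step is the conditional-law statement in the second part --- carrying the Matsumoto--Yor / GIG algebra over to matrices and controlling the degenerate initial configuration --- together with the well-posedness of the singular equation \eqref{Zdiffusion} near $\det Z=0$, which is exactly where the hypothesis $|\mu|>\tfrac{r-1}{2}$ is needed. The $X_t$ equation and the $\mu\mapsto-\mu$ symmetry are routine by comparison.
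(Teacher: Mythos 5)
Your treatment of $X_t$ (direct It\^o computation, identical to the paper's identification of $X_t$ with the process $Q_t$ of \eqref{basic_Qprocess}) and your reduction of \eqref{Zdiffusion} to the conditional law of $M_t^{T}Z_t^{-1}$ given $\sigma(Z_s,s\le t)$, followed by projection via the innovations theorem, are both sound and match the paper. The genuine gap is in the one step you yourself flag as the main obstacle: establishing that $M_t^{T}A_t^{-1}M_t$ is conditionally matrix-GIG. Your plan --- propagate the conditional law forward in $t$ through the multiplicative-increment decomposition of $A_{t+h}$ using a ``matrix Matsumoto--Yor / beta--gamma algebra'' --- is not carried out and, as stated, has no mechanism for producing the required independence inputs. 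The paper's argument is quite different and works at a \emph{fixed} time $t$: with $X=M_t^{T}A_t^{-1}M_t$ and $Y=M_t^{T}(A_\infty-A_t)^{-1}M_t$, Theorem \ref{thm:MatrixDufresne} gives $Y\sim\gamma_{2\mu,I}$ independent of $\mathcal{B}_t$; the enlargement-of-filtration results (Propositions \ref{claim:enlarge} and \ref{claim:Inverses}) give that $(X+Y)^{-1}=Z_t(A_t^{-1}-A_\infty^{-1})Z_t^{T}$ and $X^{-1}-(X+Y)^{-1}=Z_tA_\infty^{-1}Z_t^{T}$ are conditionally independent, because $A_t^{-1}-A_\infty^{-1}=\int_0^tN_sN_s^{T}ds$ is measurable with respect to $\sigma(\hat B_s,s\le t)$, which is independent of $A_\infty$; and then Bernadac's \emph{converse} characterization of the matrix GIG \cite{Bernadac} (Lemma \ref{lem:Bernadac}) identifies the law of $X$. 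Without the Dufresne identity and the $N_t$/$\hat B_t$ construction of Section \ref{sec:ProcessDufresne}, there is no visible route to these independence statements, and the explicit semigroup of $(M_t,A_t)$ that a forward-propagation argument would require is not available. So the proposal does not contain a proof of the key conditional law, and this is also why the hypothesis $2\mu>r-1$ enters (through $A_\infty$), not through well-posedness near $\det Z=0$ as you suggest.

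On the $\mu\mapsto-\mu$ invariance your route genuinely differs from the paper's, and the computational core of it is correct and worth noting: the identities $K_r(s\mid A,B)=K_r(-s\mid B,A)$ and $K_r(s\mid GAG^{T},B)=(\det G)^{-2s}K_r(s\mid A,G^{T}BG)$ do give $K_r(\mu\mid A,B)/K_r(-\mu\mid A,B)=(\det A)^{-\mu}(\det B)^{\mu}$, and together with $\kappa_p(A,B)=-2\frac{\partial}{\partial A}\log K_r(p\mid A,B)$ this yields $\kappa_{\mu}(I,B)-\kappa_{-\mu}(I,B)=2\mu I$ --- precisely the identity the paper says ``seems laborious (and non-trivial)'' to verify directly, so this is a nice contribution. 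However, to upgrade equality of drifts to equality in law you need uniqueness in law for \eqref{Zdiffusion} started from the degenerate point $Z_0=0$, which you do not establish and which is delicate. The paper sidesteps both issues at once with Proposition \ref{ZequalsZhat}: the pathwise identity $Z_t=\hat Z_t$, where $\hat Z_t=N_t^{-1}\int_0^tN_sN_s^{T}ds$ is built from a $+\mu$ copy of the Brownian motion on $GL_r$, gives the sign-flip invariance by pure algebra, with no SDE uniqueness required. I would recommend adopting the paper's mechanism for the invariance and keeping your $K$-Bessel computation as a corollary rather than as the proof.
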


While both $Z_t$ and the right hand side of \eqref{Zdiffusion} are sensible for all $\mu$, our method uses Theorem \ref{thm:MatrixDufresne} as input and so requires the  same condition. One assumes this gap might be filled by other means.

In general the mean of a matrix GIG does not appear to have a particularly nice expression.  Though if $A$ and $B$ are diagonal, one can see that $\kappa_{p}(A, B)$ is diagonal as well.  And by bringing in a (well known) generalization of the $K$-Bessel functions  introduced thus far one can get a reasonable handle on these diagonal components. See the proof of Theorem \ref{thm:Laplace} below for both points.  Note that the invariance of $Z_t$ under the map $\mu \mapsto -\mu$ implies the identity 
$ \kappa_{\mu}(I,A) = 2 \mu I + \kappa_{-\mu}(I,A)$, for $|\mu|>\tfrac{r-1}{2}$. (A standard analytic continuation argument  extends the identity to all $\mu\in \R$.)
A direct verification of  this identity seems laborious (and non-trivial).

 The key to \eqref{Zdiffusion} is that
the conditional distribution of $M_t$ given  $\{ Z_s, s \le t, Z_t = Z \}$ is $Z^T \Xi$ for 
$\Xi \sim \eta_{\mu, I,   (Z Z^T)^{-1} }$. This
hinges on  a characterization of the matrix GIG law due to Bernadac \cite{Bernadac}, which in turn builds on earlier work of
Letac-Wesolowski \cite{Letac}.  An immediate consequence of this is the following.

\begin{corollary} 
\label{cor:Intertwining} Let $|\mu|>\tfrac{r-1}{2}$. 
The laws of $M_t$ and $Z_t$ intertwine. Denote by $T_t^M$ and $T_t^Z$ the corresponding semigroups and define the Markov kernel $\Lambda$ as %from $GL_r$ to itself by,
\begin{equation}
\label{eq:Intertwining}
  \Lambda h( Z) =  \int_\mathcal{P} h(Z^T X  )  \, \eta_{\mu, I,  (ZZ^T)^{-1} } (dX),
\end{equation}
for all  suitable test functions $h: GL_r \mapsto \R$. Then it holds that ${\Lambda } T_t^M = T_t^Z \Lambda$.  Since $X_t = Z_t M_t^{-T}$, it follows that $X_t$ also intertwines with $Z_t$. In this case,  
\begin{equation}
\label{eq:Intertwining2}  
    \tilde{\Lambda} h(Z) =  \int_\mathcal{P} h(X^{-1})  \, \eta_{\mu, I, (ZZ^T)^{-1} } (dX),
\end{equation}
defines the corresponding kernel for which $\tilde{\Lambda } T_t^X = T_t^Z \tilde{\Lambda}$.
\end{corollary}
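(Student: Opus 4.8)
The plan is to read off the intertwining directly from the conditional-law statement quoted just before the corollary, namely that the law of $M_t$ given $\{Z_s, s\le t\}$ with $Z_t = Z$ is that of $Z^T\Xi$ with $\Xi\sim\eta_{\mu,I,(ZZ^T)^{-1}}$. Once this is in hand, the kernel $\Lambda$ in \eqref{eq:Intertwining} is precisely the regular conditional distribution of $M_t$ given $Z_t$, and the intertwining $\Lambda T_t^M = T_t^Z\Lambda$ becomes the standard assertion that a sufficient/Markovian statistic pushes a semigroup forward consistently. Concretely, I would proceed as follows.

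First I would record the joint Markovian structure: since $Z_t$ is a diffusion (Theorem \ref{thm:MatrixXandZ}, using $|\mu|>\tfrac{r-1}{2}$) and since, as noted in Section \ref{sub:IntroDufresne}, the multiplicative increments of $M$ are stationary and independent, the pair $(M_t, Z_t)$ is Markov and the conditional law of $M_t$ given the past of $Z$ depends only on $Z_t$ — this is exactly the displayed GIG statement. Call this conditional kernel $\Lambda(Z, \cdot)$; by inspection it is \eqref{eq:Intertwining}. Second, I would verify the intertwining by a one-step (generator or semigroup) computation: for a test function $h$ on $GL_r$,
\begin{equation*}
T_t^Z(\Lambda h)(Z) = \ev\big[\,\ev[h(M_t)\mid Z_t]\,\big|\, Z_0 = Z\,\big] = \ev[h(M_t)\mid Z_0 = Z] = \ev\big[\,\ev[h(M_{t}/M_{0}\cdot M_0)\mid M_0]\,\big],
\end{equation*}
and then use that, conditionally on $Z_0 = Z$, $M_0\sim\Lambda(Z,\cdot)$ while the increment $M_0^{-1}M_t$ is an independent copy of the $M$-process run for time $t$; this factorizes the expectation as $\int \Lambda(Z,dM_0)\,(T_t^M h)(M_0) = \Lambda(T_t^M h)(Z)$, which is $\Lambda T_t^M h(Z)$. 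Third, for the $X$-part I would simply substitute $X_t = Z_t M_t^{-T}$, so that conditionally on $Z_t = Z$ one has $X_t = Z(Z^T\Xi)^{-T} = Z\Xi^{-T}Z^{-T}$; pushing $\eta_{\mu,I,(ZZ^T)^{-1}}$ forward under $\Xi\mapsto \Xi^{-1}$ (using $X\mapsto X^{-1}$ sends $\eta_{p,A,B}$ to $\eta_{-p,B,A}$ up to the determinant Jacobian, so the push-forward is again a GIG family) and absorbing the conjugation by $Z$ into the parameters gives the kernel $\tilde\Lambda$ in \eqref{eq:Intertwining2}, after which the same semigroup argument yields $\tilde\Lambda T_t^X = T_t^Z\tilde\Lambda$.

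The main obstacle is not the intertwining identity itself — once the conditional law of $M_t$ given $Z_t$ is granted, everything above is the routine "Markov functional" argument — but rather making rigorous the claim that the conditional law of $M_t$ given the \emph{whole past} $\{Z_s, s\le t\}$ in fact depends only on $Z_t$, i.e.\ that $Z$ is a genuine Markov statistic and not merely that $(M,Z)$ is jointly Markov. This is where the characterization of the matrix GIG of Bernadac \cite{Bernadac} (building on Letac--Wesolowski \cite{Letac}) and Theorem \ref{thm:MatrixDufresne} enter: one must show the GIG conditional law is preserved along the $Z$-filtration, which is exactly the content feeding the proof of Theorem \ref{thm:MatrixXandZ}, so in practice I would cite that development and then note that the intertwining is its formal corollary. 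A secondary, purely bookkeeping point is checking the push-forward of $\eta_{p,A,B}$ under inversion and conjugation so that \eqref{eq:Intertwining2} has the stated form; this is a change-of-variables computation on $\mathcal{P}$ using that $d(ZXZ^T) = (\det Z)^{r+1} dX$ and that $X\mapsto X^{-1}$ has Jacobian $(\det X)^{-(r+1)}$.
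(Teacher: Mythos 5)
Your proposal follows essentially the same route as the paper: the key input is the GIG conditional law of $M_t$ given $\{Z_s,\,s\le t\}$ (Proposition \ref{prop:GIG}, resting on Bernadac's characterization), after which the intertwining is the standard filtering argument for Markov functionals; the paper simply outsources that last step to Proposition 2.1 of \cite{CPY} rather than writing out the semigroup computation, and (for sign-bookkeeping reasons) applies it to the pair $(N_t,\hat Z_t)$ rather than $(M_t,Z_t)$. Two small points. First, in your semigroup computation the conditioning ``$Z_0=Z$'' should be read at a generic time $s$ with $Z_s=Z$, since in the actual construction $Z_0=0$ and $M_0=I$ deterministically; this is exactly what the CPY formulation handles. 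Second, your algebra for the $X$-part has a slip: with $M_t=Z^T\Xi$ and $\Xi$ symmetric one has $M_t^{-T}=(\Xi Z)^{-1}=Z^{-1}\Xi^{-1}$, hence $X_t=Z_tM_t^{-T}=\Xi^{-1}$ directly --- not $Z\Xi^{-T}Z^{-T}$ --- so no push-forward under conjugation or Jacobian accounting is needed, and the kernel \eqref{eq:Intertwining2} falls out immediately.
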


This intertwining has had far reaching implications in the scalar case.
A remaining question here  is whether the matrix processes contain either an $M-X$ or $2M-X$ type theorem.  We  show this occurs at the level  of the eigenvalues (or singular values) of $X_t$ and $Z_t$, each of which comprise their own Markov process.

\begin{theorem}
\label{thm:Laplace}
Denote by $x_r^{\mu}(t) \le \cdots \le x_1^{\mu}(t)$ the eigenvalues of $X_t$. Denote the ordered  
singular values of $Z_t$ similarly by $z_i^\mu(t)$. Speeding up time by a factor of $c^2$ and rescaling 
$\mu$ as in $\mu = \frac{r-1}{2} + \gamma/c$ for a fixed 
$\gamma >0$ we have that
$$
    \lim_{c \rightarrow \infty} \frac{1}{2c} \log x_1^{\gamma/c}(c^2 t) \Rightarrow  | b_t^{-\gamma \mathrm{sgn}(\cdot)} |, 
$$ 
and 
$$
    \lim_{c \rightarrow \infty} \frac{1}{c} \log z_r^{\gamma/c}(c^2 t) \Rightarrow  b_t^{\gamma \coth( \gamma \,\cdot) }.
$$
The notations indicate a reflected Brownian motion with constant drift $-\gamma$, and  a Brownian motion with variable drift  $\gamma \coth (\gamma \, \cdot)$,
respectively. In both cases the convergence takes place in the usual Skorohod topology.
\end{theorem}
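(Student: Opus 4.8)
\textbf{Proof proposal for Theorem \ref{thm:Laplace}.}

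The plan is to work entirely on the stochastic differential equation side, using the eigenvalue/singular-value diffusions associated to $X_t$ and $Z_t$ together with the one–dimensional Laplace-asymptotics heuristic `$\exp\int\log$ replaces $\max$', exactly as in the scalar reduction leading to \eqref{xzGenerators}. First I would record that, by the rotational invariance of $M_t$, both $X_t$ and $Z_t$ induce autonomous Markov processes on their spectra: the eigenvalues of $X_t$ form a diffusion with a generator of the same shape as $G_\Lambda$ in the Remark after Theorem \ref{thm:MatrixBessel} (drift $-(r-1+2\mu)\lambda_k$ plus the Dyson-type repulsion terms $\frac{1}{\lambda_k-\lambda_\ell}(2\lambda_k^2\partial_k-2\lambda_\ell^2\partial_\ell)$ and the additive $\sum I$ coming from the $I\,dt$ term in \eqref{Xprocess}), and the squared singular values of $Z_t$ form a diffusion whose generator is obtained from that of the spectrum of $X_t$ via the intertwining of Corollary \ref{cor:Intertwining} — concretely, conjugating by the $h$-transform built from the diagonal entries of $\kappa_\mu$, which by the proof of Theorem \ref{thm:Laplace} (the remark that $\kappa_p(A,B)$ is diagonal when $A,B$ are, handled through the generalized $K$-Bessel functions) can be written in terms of ratios $K_{\mu+1}/K_\mu$ evaluated at the individual $z_i$. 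This reduces the statement to a one–parameter family of one-dimensional limit problems.

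Next I would perform the scaling. Set $\mu=\tfrac{r-1}{2}+\gamma/c$, speed time by $c^2$, and put $\xi_k(t)=\tfrac{1}{2c}\log x_k^{\gamma/c}(c^2 t)$ and $\zeta_k(t)=\tfrac1c\log z_k^{\gamma/c}(c^2 t)$. Applying It\^o's formula to these logarithmic changes of variable in the eigenvalue SDEs, the diagonal drift $-(r-1+2\mu)\lambda_k = -(2(r-1)+2\gamma/c)\lambda_k$ produces, after the $\tfrac{1}{2c}\log$ rescaling, a constant drift tending to $-\gamma\,\mathrm{sgn}(\cdot)$ for the top eigenvalue (the $\mathrm{sgn}$ appearing because the additive $I\,dt$ term acts as a reflecting mechanism at the origin, just as $dt$ does for the scalar $x_t$ in \eqref{xdiffusion}–\eqref{xzGenerators}); the crucial point is that the repulsion terms $\frac{1}{\lambda_k-\lambda_\ell}(2\lambda_k^2\partial_k-2\lambda_\ell^2\partial_\ell)$, when written in the logarithmic coordinates and rescaled, contribute drift of order $1$ but \emph{only through the gaps}, and under the rescaling the gaps $\xi_k-\xi_\ell$ diverge, so the pair interaction freezes in the sign of that gap — it degenerates to a term that simply reinforces the ordering and does not affect the marginal law of the extreme particle $\xi_1$. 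One then gets that $\xi_1$ converges to the diffusion with generator $G_x=\tfrac12\partial^2-\gamma\,\mathrm{sgn}(x)\partial_x$ reflected at $0$, i.e. $|b_t^{-\gamma\,\mathrm{sgn}(\cdot)}|$. For $\zeta_r$ the same scaling is applied to the $Z$-singular-value SDE; here the drift term $\kappa_\mu$, whose $i$-th diagonal component behaves like $\frac{K_{\mu+1}}{K_\mu}(1/z_i\cdot\text{scale})$, is analyzed via the large-order asymptotics of the Macdonald function: with $\mu\sim\gamma/c$ and argument on the relevant scale, $\tfrac1c\log$ of this ratio converges to $\gamma\coth(\gamma\,\zeta_r)$, reproducing exactly the scalar computation behind \eqref{zdiffusion}–\eqref{xzGenerators}, while again the singular-value repulsion terms freeze out.

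To make the above rigorous I would (i) show tightness of $\{\xi_1(\cdot)\}_c$ and $\{\zeta_r(\cdot)\}_c$ in Skorohod space — for $\xi_1$ this is straightforward from the explicit SDE after rescaling since the martingale part has bounded quadratic variation density and the drift is bounded above; for $\zeta_r$ one needs a uniform-in-$c$ upper bound on the $\kappa_\mu$-drift, which follows from uniform bounds on $K_{\mu+1}/K_\mu$; (ii) identify every subsequential limit as a solution of the appropriate martingale problem, controlling the error terms coming from the frozen repulsion and from the error in the Bessel-ratio asymptotics — here one uses that the relevant quantities are evaluated at points staying (with high probability) a macroscopic distance from the degenerate configurations; and (iii) invoke uniqueness for the limiting one-dimensional martingale problems (reflected Brownian motion with drift, and the $\coth$-drift diffusion, the latter having an entrance boundary at $0$ as noted after \eqref{xzGenerators}). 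The main obstacle I anticipate is step (ii) for $\zeta_r$: one must show that the interaction drift terms involving $1/(z_i^2-z_r^2)$ together with the off-diagonal structure of $\kappa_\mu$ — which is \emph{not} diagonal in general, only when the matrix argument is — genuinely vanish in the limit and do not conspire to change the drift of the smallest singular value. Controlling this likely requires a separation-of-singular-values estimate uniform in $c$ (so that $z_i^2-z_r^2$ stays bounded below on the scaled clock), analogous to non-collision estimates for Dyson-type processes, combined with the observation that the relevant off-diagonal entries of $\kappa_\mu$ are exponentially small in $c$ once the singular values are well separated.
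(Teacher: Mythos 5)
Your overall strategy---pass to the spectral SDEs, take logarithms, rescale, and argue that the interactions freeze out while the surviving one-dimensional drifts reproduce the scalar Matsumoto--Yor computation---is the same as the paper's, and your treatment of $x_1$ (the additive $I\,dt$ term generating a reflection/local-time mechanism at the origin, the Dyson-type repulsion killed by separation of scales) matches the paper's Proposition \ref{prop:Zasymp}, which makes this precise via the Skorohod problem for $b_t^{-\gamma}$. Two points, however, are off. First, the singular values of $Z_t$ are not obtained in the paper by an $h$-transform or intertwining of the $X_t$ spectrum; Corollary \ref{cor:Intertwining} intertwines the matrix processes through the GIG kernel and does not hand you a conjugation between the two spectral generators. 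The paper instead applies It\^o directly to $Z_t^T Z_t$ (Lemma \ref{XandZspec}), and the covariance $\kappa_\mu(I,UAU^T)=U\kappa_\mu(I,A)U^T$ together with a sign-flip conjugation by a diagonal orthogonal matrix shows that $\kappa_\mu(I,\Lambda)$ is \emph{exactly} diagonal at diagonal arguments---so the ``main obstacle'' you anticipate about off-diagonal entries of $\kappa_\mu$ is a non-issue.

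Second, and this is the genuine gap: you write the drift of $z_i$ as a scalar Macdonald ratio $\frac{K_{\mu+1}}{K_\mu}$ evaluated at $1/z_i$. It is not. The drift is $\frac{1}{z_i}[\kappa_\mu(\Lambda_z,I)]_{ii}$, the $i$-th diagonal entry of the mean of a \emph{matrix} GIG, which is a ratio of matrix-argument $K$-Bessel functions $K_r(\mathbf{s}\,|\,\Lambda,I)$ with vector index $\mathbf{s}$ and which depends on all $r$ singular values simultaneously. Showing that this ratio collapses, uniformly in the relevant range, to the scalar ratio $K_{1+\gamma}(e^{-y})/K_{\gamma}(e^{-y})$ as the other singular values escape to infinity is the heart of the paper's proof of the second limit: it requires Terras' induction formula expressing $K_r(\mathbf{s}\,|\,\Lambda,I)$ as an integral of a one-dimensional Macdonald function against a lower-rank $K$-Bessel density, followed by a Laplace-type concentration argument for the resulting measure $\rho^{(i)}_\Lambda$ at $x=0$, combined with the separation-of-scales estimate. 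Your proposal assumes the conclusion of this reduction as its starting point, so the $Z$ half of the argument as written does not close.
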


In either case, the reminder of the spectrum has a relatively trivial limit in the chosen scaling. For $X_t$, each of  the similarly scaled lower eigenvalues  converge to the zero process. For $Z_t$, the larger singular values escape to infinity at increasing exponential rates. Note as well that Theorem \ref{thm:Laplace} provides  an analog for just half of the Matsumoto-Yor result $-$ one might like at the same time to have  path-wise identities by applying some sort of Laplace asymptotics to the definitions \eqref{XandZ}.

The proofs of Theorems \ref{thm:MatrixXandZ} and \ref{thm:Laplace}, along with that of Corollary \ref{cor:Intertwining}, are found in Section \ref{sec:XandZ}.

\subsection{Burke properties}
\label{sub:IntroBurke}

O'Connell-Yor \cite{OconYor} proved the following ``Brownian Burke property". Let $b_t$ and $c_t$ be independent Brownian motions, and 
set 
\begin{equation}
\label{TITO1}
    r_t = \log \int_{-\infty}^t e^{ b_{(s,t)} + c_{(s,t)} - \mu (t-s)} ds,
\end{equation}
where $b_{(s,t)} = b_t - b_s$  and $c_{(s,t)} = c_t - c_s$.
Then
\begin{equation}
\label{TITO2}
    \left. \begin{array}{l} b_t + r_0 - r_t , t\in \R\\   c_t + r_0 - r_t, t\in \R \end{array}  \right\} \eqd \left\{b_t, t\in \R\right\} \mbox{ and are independent}.
\end{equation}
%Further,  for fixed $t \in \R$,  the field generated by the pair of new Brownian motions over $s \le t$ is independent of $\{ r_s, s \ge t\}$.
 The analogy with the classical Burke property is made by considering $t \mapsto r_t$ as a generalized queue, where  `$\log \int \exp$' again replaces `$\sup$', with $t \mapsto b_t $ and $t \mapsto \mu t - c_t$ the respective arrival and departure processes.  This result is key in the construction of the semi-directed Brownian polymer (also introduced in \cite{OconYor}) which is  now understood to be a member of the KPZ universality class \cite{BorCorFer}. A similar Burke type property lies behind the integrability of Sepp\"al\"ainen's log-gamma polymer \cite{Sep} which has also subsequently been shown to have Tracy-Widom  fluctuations \cite{BorCorRem}.

The above scheme constructs two new independent Brownian motions from two independent input Brownian motions. As a preliminary step, a similar statement is established in \cite{OconYor} that shows that 
\begin{equation}
\label{OIOO}
\left\{   b_t^{\mu} + \alpha_t - \alpha_0, t \ge 0\right\} \eqd \left\{b_t^{\mu}, t \ge 0\right\} \quad \mbox{ where } \alpha_t = \log \int_{-\infty}^t e^{ 2 b_s^{\mu} - 2  b_t^{\mu}} ds.
\end{equation}
Here we are reusing notation from before, $b_t^{\mu}$ denoting a Brownian motion with drift $\mu$. Also, for fixed $\tau \in \R$,  the field generated by the new drifted Brownian motion
$ b_t^{\mu} + \alpha_t - \alpha_0$ over $t  \le \tau $ is independent of $\{ \alpha_t, t \ge \tau\}$. 

%While we do not have an appropriate version of a matrix process Burke property, that is,  of \eqref{TITO1} and \eqref{TITO2}, we do have 

The following provides a matrix extension of \eqref{OIOO}. It requires the full line version of the process $M_t$, the details of which are again described at the beginning of Section \ref{sec:DufresneBasic}. 

\begin{theorem} 
\label{thm:OIOO} Fix $2\mu>r-1$ and 
let $M_t=M_t^{(\mu)}$ be the solution of \eqref{Mt} extended over $t \in (-\infty, \infty)$. 
 Then, 
\begin{equation}
\label{eq:MatrixOIOO}
  \left( \int_{-\infty}^0 M_s^{} M_s^T ds \right)^{-1}   M_t \left(  \int_{-\infty}^t   M_t^{-1} M_s^{} M_s^T  M_t^{-T} ds    \right) \eqd M_t,   
\end{equation}
as processes for $t \ge 0$. In addition,
for any fixed $\tau >0$, the process defined by the left hand side of \eqref{eq:MatrixOIOO} up to time $\tau$ is independent
of $t \mapsto  M_t^{-1} (\int_{-\infty}^t M_s^{} M_s^T ds) M_t^{-T}$ for  $t \ge \tau$.
\end{theorem}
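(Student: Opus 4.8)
\textbf{Proof proposal for Theorem~\ref{thm:OIOO}.}

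The plan is to reduce the full-line identity \eqref{eq:MatrixOIOO} to the already-established process Dufresne identity \eqref{eq:ProcessDufresne} of Theorem~\ref{thm:ProcessDufresne}, exploiting the stationary-increments structure of $M_t$. First I would set up notation: write $M^{-}_\infty := \int_{-\infty}^0 M_s M_s^T\,ds$ for the ``past'' contribution and, using the multiplicative increment property, decompose the integral $\int_{-\infty}^t M_t^{-1} M_s M_s^T M_t^{-T}\,ds$ into the part over $s\in(-\infty,0]$ and the part over $s\in[0,t]$. For the $s\le 0$ piece, observe that $M_t^{-1}M_s = (M_s^{-1}M_t)^{-1}$ and that by time-reversal/increment stationarity the relevant objects are built from an independent copy of the process; the $s\in[0,t]$ piece is exactly $M_t^{-1}(\int_0^t M_s M_s^T\,ds)M_t^{-T} = X_t$ in the notation of \eqref{XandZ} (with the convergent sign of $\mu$ when we run time in the appropriate direction). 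The goal of this bookkeeping is to rewrite the left side of \eqref{eq:MatrixOIOO} as a product of three factors: $(M^-_\infty)^{-1}$, the increment $M_t$ itself, and a term of the form $M_t\,(\text{past})\,M_t^T + (\text{forward integral})$ sandwiched appropriately, so that the whole expression collapses to the shape $M_t \cdot (\text{something involving } (A^{(\pm\mu)}_t)^{-1})$.

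Second, I would invoke the scalar-case template from \cite{OconYor}: in one dimension, \eqref{OIOO} follows from the Dufresne process identity \eqref{Dufresne2} together with the observation that $\alpha_t-\alpha_0 = \log a^{(-\mu)}_t - \log a^{(-\mu)}_\infty$ can be re-expressed, via \eqref{Dufresne2}, in terms of an independent copy of the diverging functional. The matrix analog is to show that the left-hand side of \eqref{eq:MatrixOIOO}, call it $\widehat M_t$, satisfies $\widehat M_t \widehat M_t^T = $ the object whose inverse is $(A^{(\mu)}_t)^{-1}$ built from a fresh Brownian motion, while the ``angular'' part (the orthogonal factor in a polar decomposition) is handled by the rotational invariance of $M_t$ noted after \eqref{Mt}. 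Concretely: one checks that $\widehat M_t$ solves the same It\^o equation \eqref{Mt} with drift $\mu$ driven by some matrix Brownian motion $\widehat B_t$, and that $\widehat B_t$ is genuinely a standard matrix Brownian motion adapted to the appropriate filtration. The independence claim in the second half of the theorem should then come out of the enlargement-of-filtration construction already used for Proposition~\ref{claim:enlarge} / Theorem~\ref{thm:ProcessDufresne}: the process $t\mapsto M_t^{-1}(\int_{-\infty}^t M_s M_s^T\,ds)M_t^{-T}$ for $t\ge\tau$ is, by the increment property, a functional of $\{M_{\tau}^{-1}M_{t}: t\ge\tau\}$ plus the single random matrix $M_\tau^{-1}(\int_{-\infty}^\tau M_sM_s^T\,ds)M_\tau^{-T}$, and one shows this last matrix is exactly the ingredient that makes $\widehat B_t$ for $t\le\tau$ independent of the future.

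The main obstacle I anticipate is the same subtlety that appears in the scalar proof: verifying that the ``new'' driving Brownian motion $\widehat B_t$ extracted from $\widehat M_t$ is indeed a standard matrix Brownian motion (correct quadratic variation, independent entries) and, more delicately, establishing the precise filtration with respect to which the independence/adaptedness statements hold. In the matrix setting this is genuinely harder than in one dimension because $M_t$ and its transpose do not commute, so the It\^o calculus for $\widehat M_t\widehat M_t^T$ and for the polar factors involves cross terms that must be tracked carefully; one cannot simply take logarithms. I would handle this by working directly with $\widehat M_t\widehat M_t^T$ and $\widehat X_t := \widehat M_t^{-1}(\cdots)\widehat M_t^{-T}$, applying It\^o's formula to identify their generators and matching them against those in Theorems~\ref{thm:MatrixDufresne}–\ref{thm:MatrixXandZ}, rather than trying to guess $\widehat B_t$ explicitly. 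A secondary technical point is the a.s.\ finiteness of $\int_{-\infty}^0 M_sM_s^T\,ds$ and of the forward integral, which is where the hypothesis $2\mu>r-1$ enters (via Lemma~\ref{normlemma} applied to the relevant reversed process); this should be routine given the norm asymptotics already quoted.
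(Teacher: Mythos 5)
Your high-level plan points at the right toolbox (the enlargement-of-filtration machinery behind Theorem \ref{thm:ProcessDufresne}, i.e.\ Propositions \ref{claim:enlarge} and \ref{claim:Inverses}, together with the O'Connell--Yor template), and your observation that the left side of \eqref{eq:MatrixOIOO} simplifies to $(\int_{-\infty}^0 M_sM_s^T ds)^{-1}(\int_{-\infty}^t M_sM_s^Tds)M_t^{-T}$ is the correct first move. But the proposal stops short of the two steps that actually make the argument work, and the fallback you offer in their place would not go through as described. The paper's proof runs the Dufresne machinery on the \emph{convergent} process: with $N_t=A_\infty(A_\infty-A_t)^{-1}M_t$ (Proposition \ref{claim:Inverses}) and $\hat A_t=\int_0^t N_sN_s^Tds$, a resolvent computation gives the purely algebraic identity $A_\infty(A_\infty-A_t)^{-1}=(A_\infty+\hat A_t)A_\infty^{-1}$, so that $N_t$ is exactly of the form ``(past)$\,\cdot\,$(past $+$ forward integral of $N$)$^{-1}\cdot\,$(drift-$\mu$ $GL_r$ Brownian motion)''. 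The decisive point is then that $A_\infty$ is independent of the entire $\hat B$-measurable family $(N_t,\hat A_t)$ (Proposition \ref{claim:enlarge}), so it may be replaced in law by the independent copy $\int_{-\infty}^0 N_sN_s^Tds$ obtained by extending $N$ to the whole line; after an inverse-transpose this is precisely \eqref{eq:MatrixOIOO}. Your sketch never produces this substitution-by-an-independent-copy step, which is where the distributional identity actually comes from; ``the whole expression collapses'' is asserted rather than demonstrated.

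Your proposed workaround --- showing directly that the left side $\widehat M_t$ solves \eqref{Mt} for some new matrix Brownian motion $\widehat B_t$, identified by It\^o calculus and generator matching --- is not merely technically harder, it is problematic in principle: if you differentiate $A_{(-\infty,0)}^{-1}A_{(-\infty,t)}M_t^{-T}$ the martingale part comes out as $A_{(-\infty,0)}^{-1}A_{(-\infty,t)}\,dB_t^T\,M_t^{-T}=L_t\,M_t^T dB_t^T M_t^{-T}$, and $M_t^TdB_t^TM_t^{-T}$ is not a Brownian increment, so there is no closed SDE for $\widehat M_t$ in its own filtration to match against; the theorem is an identity in law, not a pathwise SDE statement. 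Finally, the independence claim in the second half of the theorem is not addressed by your sketch: the paper proves it via the explicit time-reversal identity
\begin{equation*}
N_t^{-1}\hat A_{(-\infty,t)}N_t^{-T}=L_t^T\Bigl(\int_t^\infty L_s^{-T}L_s^{-1}\,ds\Bigr)L_t
=\int_t^\infty (L_t^{-1}L_s)^{-T}(L_t^{-1}L_s)^{-1}\,ds,
\end{equation*}
which exhibits the ``future'' process as a functional of the multiplicative increments of $L$ after time $t$, hence independent of $\{L_s,\,s\le t\}$. Your version of this step (``one shows this last matrix is exactly the ingredient that makes $\widehat B_t$ independent of the future'') is circular as stated; you would need to supply an identity of the above type.
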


Note that
$ M_t^{-1} (\int_{-\infty}^t M_s^{} M_s^T ds) M_t^{-T}$ reduces to $e^{\alpha_t}$
 (and so \eqref{eq:MatrixOIOO} reduces precisely to \eqref{OIOO}) for $r=1$. 
 
 The next result is a matrix version of  \eqref{TITO2}, the Brownian Burke property of O'Connell-Yor.

\begin{theorem}\label{thm:TITO}
Let $B_t$ and $C_t$ be independent two-sided matrix Brownian motions and set $2 \mu > r-1$. Consider
the strong solution of 
\[
dH_t=H_t(dB_t+dC_t)+(2\mu+1) H_t dt, \qquad H_0=I,
\] 
extended to the whole line, again using the (multiplicative) independence stationary increment property as described in Section \ref{sec:DufresneBasic}. 
Now define the processes $A_{(-\infty,t)}=\int_{-\infty}^s H_u H_u^T du$ and 
\begin{align*}
F_t=B_t+2\mu I t- \frac12 \int_0^t H_s^TA_{(-\infty,s)}^{-1} H_s ds,\qquad
G_t=C_t+2\mu I t- \frac12 \int_0^t H_s^TA_{(-\infty,s)}^{-1} H_s ds.
\end{align*}
Then $(F_t, G_t, t\ge 0) \eqd (B_t,C_t,\ge 0)$. 
\end{theorem}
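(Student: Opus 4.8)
The plan is to deduce the statement from Theorem~\ref{thm:OIOO} (the matrix analogue of \eqref{OIOO}) applied to the process $H_t$, mirroring the way the scalar Burke property \eqref{TITO2} is obtained from \eqref{OIOO}. Write $V_t=B_t+C_t$ and $D_t=B_t-C_t$; these are independent two-sided matrix Brownian motions, each equal in law to $\sqrt 2$ times a standard matrix Brownian motion. The key structural observations are that $H$ is driven by $V$ alone, $dH_t=H_t\,dV_t+(2\mu+1)H_t\,dt$, and that $H_t\eqd M^{(\mu)}_{2t}$ as processes for $M^{(\mu)}$ as in \eqref{Mt}; since $2\mu>r-1$, the two-sided construction of $H$ and the almost sure finiteness of $A_{(-\infty,t)}=\int_{-\infty}^t H_sH_s^T\,ds$ are supplied by the discussion at the start of Section~\ref{sec:DufresneBasic} and by Lemma~\ref{normlemma} (modulo the trivial time rescaling). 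In particular $A_{(-\infty,s)}$ is positive definite for every $s$, so $F_t$ and $G_t$ are well-defined continuous processes vanishing at $t=0$.

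I would next separate the difference from the sum. Since the bounded-variation correction is common to $F$ and $G$, one has $F_t-G_t=B_t-C_t=D_t$ on the nose, already a $\sqrt2$-times-standard matrix Brownian motion, so the whole content lies in the sum. Put $\mathcal{X}_s=H_s^{-1}A_{(-\infty,s)}H_s^{-T}$, which is symmetric and positive definite and satisfies $H_s^TA_{(-\infty,s)}^{-1}H_s=\mathcal{X}_s^{-1}$, so that
\[
F_t+G_t=V_t+4\mu I\,t-\int_0^t\mathcal{X}_s^{-1}\,ds .
\]
Unravelling \eqref{eq:MatrixOIOO} for $H$, its left-hand side is $\mathcal{L}_t=A_{(-\infty,0)}^{-1}A_{(-\infty,t)}H_t^{-T}$. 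Applying It\^o's formula to this product — using $dA_{(-\infty,t)}=H_tH_t^T\,dt$ and $d(H_t^{-T})=-H_t^{-T}\,dV_t^T+(1-2\mu)H_t^{-T}\,dt$, and then simplifying with $A_{(-\infty,0)}^{-1}A_{(-\infty,t)}H_t^{-T}=\mathcal{L}_t$ and $A_{(-\infty,0)}^{-1}H_t=\mathcal{L}_t\mathcal{X}_t^{-1}$ — I expect everything to collapse to
\[
\mathcal{L}_t^{-1}\,d\mathcal{L}_t=-dV_t^T+\bigl(\mathcal{X}_t^{-1}+(1-2\mu)I\bigr)\,dt .
\]
Hence the drift-corrected process $N_t:=\int_0^t\mathcal{L}_s^{-1}\,d\mathcal{L}_s-(2\mu+1)I\,t$ equals $-V_t^T+\int_0^t\mathcal{X}_s^{-1}\,ds-4\mu I\,t$, and, since $\mathcal{X}_s$ is symmetric, comparison with the display above gives the identity $F_t+G_t=-N_t^T$.

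The conclusion then follows from Theorem~\ref{thm:OIOO}, which gives $\{\mathcal{L}_t\}_{t\ge0}\eqd\{H_t\}_{t\ge0}$ as processes. The assignment that sends a $GL_r$-valued continuous semimartingale $K$ with $K_0=I$ to $\int_0^t K_s^{-1}\,dK_s-(2\mu+1)I\,t$ preserves equality in law — equivalently, $K$ is recovered from this extracted process as the strong solution of the corresponding linear $GL_r$-equation — and for $H$ it returns $V$ exactly; hence applied to $\mathcal{L}$ it returns a process distributed as $V$, i.e.\ $N\eqd V$. As $-V^T\eqd V$ for a matrix Brownian motion, we get $F+G=-N^T\eqd V$, so $F+G$ is $\sqrt2$ times a standard matrix Brownian motion. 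Finally $F_t+G_t$ is a measurable functional of $V$ alone (through $H$ and $A_{(-\infty,\cdot)}$) and is therefore independent of $F-G=D$; since $V$ and $D$ are independent, each $\sqrt2$-times-standard, the pair $(F+G,\,F-G)$ is a pair of independent such processes, whence $(F,G)=\bigl(\tfrac12(F+G)+\tfrac12(F-G),\ \tfrac12(F+G)-\tfrac12(F-G)\bigr)\eqd(B,C)$ is a pair of independent standard matrix Brownian motions.

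The routine but delicate point is the It\^o computation for $\mathcal{L}_t$: one has to keep track of matrix orderings and transposes so that the drift contributions — from $dA_{(-\infty,t)}$, from the It\^o correction $(dV)(dV)=2I\,dt$, and from the drift of $H$ — reassemble exactly into $\mathcal{X}_t^{-1}+(1-2\mu)I$ and the constants combine to $-4\mu$. Beyond this, the argument is conceptual bookkeeping: the role of Theorem~\ref{thm:OIOO} is precisely to certify that the ``Brownian part'' of the OIOO-transformed process $\mathcal{L}$ is again a matrix Brownian motion.
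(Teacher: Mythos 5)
Your proposal is correct, and its overall skeleton coincides with the paper's: reduce to the sum process via $F_t-G_t=B_t-C_t$ exactly, establish a one--Brownian--motion Burke identity for $F_t+G_t$, and recombine using the independence of $B_t+C_t$ and $B_t-C_t$. The difference lies in how the key intermediate fact is obtained. The paper simply invokes Corollary \ref{anotherOIOO} --- the statement that $B_t+2\mu I t-\int_0^t M_s^T(\int_{-\infty}^s M_uM_u^T\,du)^{-1}M_s\,ds$ is again a matrix Brownian motion --- which it derives from the enlargement-of-filtration identity of Proposition \ref{claim:enlarge} together with the algebra in the proof of Theorem \ref{thm:OIOO}. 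You instead re-derive this fact from the \emph{conclusion} of Theorem \ref{thm:OIOO}: writing the left side of \eqref{eq:MatrixOIOO} as $\mathcal{L}_t=A_{(-\infty,0)}^{-1}A_{(-\infty,t)}H_t^{-T}$, computing $\mathcal{L}_t^{-1}d\mathcal{L}_t$ by It\^o (your computation checks out, including the $dV^T dV^T=2I\,dt$ correction and the resulting drift $\mathcal{X}_t^{-1}+(1-2\mu)I$), and using that the extraction of the driving martingale commutes with equality in law of continuous semimartingales (justified, e.g., by Riemann-sum approximation of the It\^o integral) to conclude $N\eqd V$ and hence $F+G=-N^T\eqd V$. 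This buys a self-contained argument that uses only the published statement of Theorem \ref{thm:OIOO} rather than its internal machinery, at the cost of the extra It\^o bookkeeping; the paper's route is shorter because Corollary \ref{anotherOIOO} was already set up for exactly this purpose.
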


Here the analogy to the one-dimensional case is not as immediate, but the  process $t \mapsto 2 \mu t I -  \frac12  \int_0^t H_s^TA_{(-\infty,s)}^{-1} H_s ds$
can be seen to correspond to $r_0- r_t $ by  first differentiating and then integrating back up in the definition \eqref{TITO1}. Note as well that $H_{t/2} \eqd M_{t}^{(\mu)}$.

Theorems \ref{thm:OIOO} and \ref{thm:TITO} are proved in Section \ref{sec:ProcessDufresne}. 

\subsection{Connection to spiked random matrices}
\label{sub:IntroSpike}

An important problem in mathematical statistics is to describe the law of the largest eigenvalue of sample covariance (or Wishart) matrices of the form $G \Sigma G^{\dagger}$. In the basic setting  $G$ is $p \times q$ and comprised  independent unit Gaussians in $\mathbb{F} = \R, \mathbb{C}, $ or $\mathbb{H}$ and $\dagger$ is the associated conjugate transpose. One is typically interested in the limit as $p$ and $q$ tend to $\infty$ with $\Sigma$ some deterministic sequence of symmetric population matrices. When $\Sigma = I$, this so-called soft-edge limit is well known to be given by the $\beta=1,2$ or $4$ Tracy-Widom laws (for the case of   $\R, \mathbb{C}, $ or $\mathbb{H}$ entries respectively). Moving toward the more general problem, the spiked ensembles in which $\Sigma = \Sigma_r \oplus I_{q-r}$ and $r$ remains fixed as $p$ and $q$ grow have generated considerable interest.

Using the determinantal framework at $\beta=2$, \cite{BBP} proved there exists a phase transition.  Below criticality one sees Tracy-Widom in the limit, above criticality there  are Gaussian effects (the limit given by the law of the largest eigenvalue of a finite rank GUE), with a new one parameter family of spiked soft-edge laws in the crossover regime.    Subsequent analytic work was carried at $\beta = 1$ and $\beta = 4$ by \cite{Mo} and \cite{Wang}, among others.
 
 In another direction, \cite{BV1, BV2} proved that the $\beta=1,2,$ or $4$ soft-edge spiked laws can  be characterized in a unified way
through the eigenvalue problem for the (random) operator $H$ acting on functions $f \in L^2[ [0,\infty), \mathbb{F}^r]$ defined by
$$
  H = - \frac{d^2}{dt^2} + r t + \sqrt{2} \mathcal{B}_t^{\prime}, \qquad f^{\prime}(0) = C f(0).
$$
Here $\mathcal{B}_t$ is the standard $\mathbb{F}$-invariant Brownian motion, that is, for $U \in U_r(\mathbb{F}) =
\{ V \in \mathbb{F}^{r \times r} : V V^\dagger = I \}$ it holds that $U \mathcal{B}_t U^{\dagger} \eqd  \mathcal{B}_t$,  and $C$ is the scaling limit of the matrix $\Sigma_r$. At $r=1$ the result holds for all $\beta > 0$. In that case the noise term reduces to $\frac{2}{\sqrt{\beta}} b_x^{\prime} $, and $H$ is recognized as the Stochastic Airy Operator from \cite{RRV}, but with a Robin (rather than Dirichlet) boundary condition.

The authors of \cite{RR2} asked whether one could similarly spike the hard edge, or smallest eigenvalue laws for general  $\beta$ (though see \cite{DF} for earlier work specific to $\beta=2$).  This regime is defined by setting $q = p+a$ for $a >-1$ remaining fixed as $p \rightarrow \infty$. 
A primary motivation was to confirm that the resulting spiked hard edge laws recover all known spiked soft edge laws via the  familiar hard-to-soft transition. 
 
To describe the spiked hard-edge operator, we set $\mu  = \frac{a+r}{2}$
and introduce
\begin{equation}
\label{MStrange}
  d M_t =   M_t dW_t + \left(  \frac{1}{2 \beta}  - \mu \right) M_t dt, \qquad  \mathcal{M}_t = M_t M_t^{\dagger} ,
\end{equation}
with the usual $M_0=I$. Here $t \mapsto W_t$ is again an  $r \times r$ matrix of independent Brownian motions, but now 
with each off-diagonal entry a unit 
$\mathbb{F}$-valued Brownian motion and each diagonal entry a real Brownian motion with mean-square zero and mean-square $\beta^{-1}$.
The relevant result from  \cite{RR2} is that, under the condition that $ (p \Sigma_r)^{-1} \rightarrow C^{-1}$ in norm, the limiting smallest  eigenvalues of $ p G \Sigma G^{\dagger} $ are described by the eigenvalue problem for: 
\begin{align}
\label{MatrixOp}
  G f(t) =   \int_0^{\infty} \left(  \int_0^{t \wedge  s}  e^{ru}{{\mathcal{M}}}_u^{-1} du \right)  {\mathcal{M}}_s f(s) ds 
     +    C^{-1} \int_0^\infty  {\mathcal{M}}_s f(s) ds, 
\end{align}
acting on  $f \in L^2[\mathcal{{M}}]$,  the space of  functions $f : \mathbb{R}_+ \mapsto \mathbb{F}^r$ with  
 $   \int_0^\infty (f^\dagger \mathcal{M}_t f)(t) dt < \infty$.  The operator $G$ is positive compact and actually describes the limiting {\em{inverse}}
 Wishart eigenvalues, that is, the limiting Wishart eigenvalues are  the spectral points $\lambda$ for the problem $ \lambda G f = f$.  

As in the soft edge case, when $r=1$ the result is valid for all $\beta >0$. Also when $r=1$, both $\mathcal{M}_t$ and 
$e^{rt} \mathcal{M}_t^{-1}$   reduce to geometric Brownian motions and $G$ has the interpretation of the Green's function for a Brownian motion in a Brownian potential. And again  at $r=1$ with $C = c \in \R$, the limit $c \rightarrow \infty$ recovers the basic $\beta$ hard edge operator introduced in \cite{RR1}. 

While there is no critical point at the hard edge,
 the supercritical regime refers to choosing  $C = c I$ and taking $c \rightarrow 0$ in \eqref{MatrixOp}.
At one level the outcome is easy to describe: $c G_{ cI}$ converges (almost surely in operator norm) to the finite rank operator defined by integration against $\mathcal{M}_t$. By analogy with the $r\times r$ Gaussian invariant ensemble  supercritical limit at the spiked soft edge, the obvious conjecture was that 
$ \mathrm{spec} ( \int_0^{\infty} \mathcal{M}_t dt ) $ should be described by the inverse Wishart law(s).  And of course for $r=1$ 
the conjecture was known to be correct due to the original Dufresne identity.

\begin{corollary}  
\label{cor:ComplexCase} With the appropriate choice of $t \mapsto W_t$,
the eigenvalues of the $r \times r$ random matrix $\int_0^{\infty} \mathcal{M}_t dt$ have  joint law given by the eigenvalues of  the inverse $\mathbb{F}$-Wishart distribution with parameter $2 \mu$.   Using isotropic matrix Brownian motions  and 
 replacing the underlying process in \eqref{MStrange} by
\begin{equation}
\label{MFlat}
  d M_t =  M_t dB_t + \left(  \frac{1}{\beta}  - \frac{1}{2}  - \mu \right) M_t dt, \quad   M_0 = I,
\end{equation}
with $B_t$ now comprised completely of independent unit 
$\mathbb{F}$-valued Brownian motions, the full  $\mathbb{F}$-Wishart distribution is recovered by
$ \int_0^{\infty} M_t M_t^{\dagger} dt $.
In both cases, the natural condition on $\mu$ remains $2 \mu > r-1$.
\end{corollary}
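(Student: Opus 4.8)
The plan is to reduce Corollary~\ref{cor:ComplexCase} to Theorem~\ref{thm:MatrixDufresne} and the Feynman--Kac machinery of Theorem~\ref{thm:MatrixBessel}, simply by tracking how $\beta\in\{1,2,4\}$ enters the various constants. There are two independent points. First, for the \emph{flat} process \eqref{MFlat} with isotropic $\mathbb{F}$-valued noise, one shows $\int_0^\infty M_tM_t^\dagger\,dt$ is honestly the inverse $\mathbb{F}$-Wishart law (as a matrix). Second, for the operator-theoretic process \eqref{MStrange}, the anisotropy of $W_t$ is calibrated so that $\mathcal{M}_t=M_tM_t^\dagger$ has the same law --- hence in particular the same spectrum --- as the flat $M_tM_t^\dagger$, so nothing new needs proving there. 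The condition $2\mu>r-1$ enters exactly as in Theorem~\ref{thm:MatrixDufresne}, guaranteeing almost-sure finiteness of the integral.

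\emph{Step 1: the flat case \eqref{MFlat}.} Fix $\mathbb{F}$ and put $Y_t=M_tM_t^\dagger$; this is a diffusion on the $\mathbb{F}$-Hermitian analogue of the cone $\mathcal{P}$, and since $M_t$ is $U_r(\mathbb F)$-invariant, the laws of $Y_t$ and of $\int_0^\infty Y_t\,dt$ are determined by the eigenvalues. Applying It\^o's formula exactly as in the proof of Theorem~\ref{thm:MatrixBessel}, but now with the covariance structure of an $\mathbb{F}$-isotropic matrix Brownian motion, produces the $\beta$-deformed generator $G_Y^{(\beta)}$ of \eqref{YGenerator}; equivalently the eigenvalue generator is the $\beta$-deformation of the operator in the Remark following Theorem~\ref{thm:MatrixBessel}, with the interaction term over pairs $k<\ell$ carrying a factor $\beta$ and the radial drifts adjusted to the corresponding $\beta$-dependent constants. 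Running the Baudoin--O'Connell argument --- i.e. $\ev\big[e^{-\frac\beta2\Tr(A\int_0^\infty Y_t\,dt)}\big]$ is the unique bounded Feynman--Kac solution of $G_Y^{(\beta)}U-\tfrac\beta2(\Tr Y)U=0$, $U(0)=1$ --- identifies this Laplace transform with the $\mathbb F$-analogue of the normalized $K$-Bessel function \eqref{KBessel1}, which is, up to a constant, the Laplace transform of the inverse $\mathbb F$-Wishart law with parameter $2\mu$. For $\mathbb F=\R$ this is Theorem~\ref{thm:MatrixDufresne}; for $\mathbb F=\CC$ one may invoke the Hermitian matrix-argument Bessel functions of Herz--Terras directly; for $\mathbb F=\HH$ the same ODE characterization is run at the level of the eigenvalue process, where the relevant $\beta=4$ Bessel/Laguerre normalizers are classical. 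Rotational invariance then lifts the eigenvalue identity to the matrix identity $\int_0^\infty M_tM_t^\dagger\,dt\eqd\gamma_{2\mu}^{-1}$ in the appropriate $\mathbb F$-Wishart sense.

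\emph{Step 2: the operator process \eqref{MStrange}.} Here I would compute $d(M_tM_t^\dagger)$ directly. The key observation is that in $M_tM_t^\dagger$ the diagonal entries of the driving noise enter only through their real parts, so replacing the $\mathbb F$-valued diagonal noise of an isotropic Brownian motion by a real Brownian motion of rate $\beta^{-1}$ changes nothing in the martingale part of $\mathcal M_t$; meanwhile the drift offset between $\tfrac1{2\beta}-\mu$ in \eqref{MStrange} and $\tfrac1\beta-\tfrac12-\mu$ in \eqref{MFlat} is precisely the It\^o compensation for that replacement. A short bookkeeping then shows that $\mathcal M_t$ and the flat $M_tM_t^\dagger$ are equal in law as diffusions (in particular share the same eigenvalue process), so the corollary's claim about $\int_0^\infty\mathcal M_t\,dt$ follows from Step~1; the phrase ``appropriate choice of $W_t$'' simply means taking the symmetric realization for which this matching holds.

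\emph{Main obstacle.} The delicate part is the constant-chasing: extracting every $\beta$-dependent coefficient in $G_Y^{(\beta)}$ from the $\mathbb F$-covariance structure, and verifying that the noise/drift offsets in \eqref{MStrange} really reproduce the flat generator rather than some perturbation of it --- routine in spirit but easy to slip on. The second genuine point is the uniqueness-and-identification step of Step~1: that the relevant $\mathbb F$- (equivalently $\beta$-) Bessel function is the unique bounded solution of the PDE/ODE and coincides with the inverse $\mathbb F$-Wishart Laplace transform. For $\mathbb F=\CC$ this is covered by the existing matrix-argument Bessel literature, but for $\mathbb F=\HH$ there is no equally clean matrix-argument theory, and the safest route is to argue entirely at the level of the eigenvalue process, where the $\beta=4$ Bessel-type normalizer and the corresponding uniqueness statement are available from the theory of $\beta$-ensembles.
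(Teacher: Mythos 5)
Your overall architecture is genuinely different from the paper's, and it is worth separating the two halves. The paper proves both cases of the corollary by the ``diffusion'' route: time-reverse to get the stationary process $Q_t$, write down the two matrix sdes \eqref{flatQ} and \eqref{funnyQ}, prove (Proposition \ref{Prop:commoneigs}, via the full spectral It\^o computation with the cancellation between the $\mathrm{diag}(Q_t)$ drift and the modified noise cross-variations) that both give the \emph{same} eigenvalue sde \eqref{CommonEigSDE}, and then identify the invariant measure by checking $G_{\beta,p}^*(e^{-V})=0$ for the divergence-form generator. No Bessel functions are needed. Your Step 2 replaces Proposition \ref{Prop:commoneigs} by a direct matching of the forward processes: since $dW_t+dW_t^\dagger$ and $dB_t+dB_t^\dagger$ are both the $U_r(\mathbb F)$-invariant Hermitian Brownian motion (diagonal variance $4\,dt/\beta$, off-diagonal variance $2\,dt$ in both cases), and since $dW\,dW^\dagger=(r-1+\tfrac1\beta)I\,dt$ versus $dB\,dB^\dagger=rI\,dt$ differ by exactly twice the drift offset $\tfrac12-\tfrac1{2\beta}$ between \eqref{MStrange} and \eqref{MFlat}, the generators of $\mathcal M_t$ and of the flat $M_tM_t^\dagger$ coincide. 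This is correct and arguably cleaner than the paper's computation --- but be aware that it works only because the noise is \emph{conjugated} ($M\,d\mathcal H\,M^\dagger$); for the left-multiplied $Q$-process the paper actually uses, the two noise structures give genuinely different matrix diffusions (compare \eqref{flatQ} with \eqref{funnyQ}), and only the eigenvalue processes agree. Also note you need the invariance of $dW+dW^\dagger$ under $U_r(\mathbb F)$-conjugation to conclude that $\mathcal M_t$ is Markov in the first place; ``short bookkeeping'' undersells what has to be checked.

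The genuine gap is in Step 1. You propose to prove the flat case by the Feynman--Kac/Baudoin--O'Connell route, which requires (a) uniqueness of bounded solutions of the $\beta$-deformed equation \eqref{KBessel2} and (b) verification that the $\mathbb F$-analogue of the $K$-Bessel function actually solves it. Point (a) carries over, but (b) is exactly the hard part: in the real case it consumed the entire Mellin-transform computation (Steps 1--4 of Section \ref{sec:DufresneBasic}), relying on the Helgason--Fourier isometry on $\mathcal P_r(\R)$ and the $x=t^Tt$ decomposition. For $\mathbb F=\CC$ you would have to redo that computation over the complex cone (the multipliers $c_1,c_2,c_3$ all change), and for $\mathbb F=\HH$ your appeal to ``classical'' $\beta=4$ Bessel normalizers from $\beta$-ensemble theory is unsubstantiated --- the paper itself remarks that even the real-case PDE characterization of $K_r(\cdot\,|\,A,I)$ appears to be new, so there is no off-the-shelf uniqueness-plus-identification statement to invoke at the eigenvalue level either. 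The fix is to abandon Feynman--Kac for $\beta=2,4$ and substitute the paper's invariant-measure argument: once you have the common eigenvalue sde (which your Step 2 delivers, or Proposition \ref{Prop:commoneigs} does), writing the generator in the form \eqref{Peig_Generator} with the $\beta$-dependent $\phi$ and $V$ and checking $G_{\beta,p}^*(e^{-V})=0$ is an elementary computation, with the Haar-invariance of the eigenvector process supplying the full matrix law in the isotropic case.
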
 

Of course, when $\beta=1$ the equations \eqref{MStrange} and \eqref{MFlat} agree and the above is a repeat of Theorem \ref{thm:MatrixDufresne}.  While the structured noise in \eqref{MStrange} is what arises in the spiked random matrix problem, we mention the result for \eqref{MFlat} for  $\beta=2$ and $4$ as it seems a more natural construction and readily produces the full matrix law. The proof of  Corollary \ref{cor:ComplexCase} is sketched alongside the proof of Theorem \ref{thm:MatrixDufresne} in Section \ref{sec:DufresneBasic}.

\subsection{Further questions}
\label{sub:IntroOpen}

The most obvious question is whether exists a (solvable) polymer model in matrix variables. The semi-directed Brownian polymer (or O'Connell-Yor polymer) alluded  to above, can be defined by the partition function
$$
   \mathcal{Z}_{n,t} = \int_{0 < s_1 < \cdots s_{n-1} < t} ds_1 \dots ds_{n-1}
                        \exp{ ( b^{(1)}_{(0, s_1)} + b^{(2)}_{(s_1, s_2)} +  \cdots + b^{(n)}_{(s_{n-1}, t)} )},
$$
where $(b^{(1)}, \dots, b^{(n)})$ is a standard  Brownian on $\R^n$. The  stationary version defined earlier in \cite{OconYor} has the first level not started at zero, but instead distributed over the negative half-line by the measure 
$e^{b_t - \mu t}$.  That partition function was in fact arrived at, and aspects of its law understood, by iterating the Brownian Burke property  described in Section \ref{sub:IntroBurke} (recall \eqref{TITO1} and \eqref{TITO2}). What is missing in our case is a matrix Burke property that can be iterated through the non-commutativity in the same fashion.

In \cite{Oconnell1} O'Connell shows that the  $t \mapsto \mathcal{Z}_{n,t}$ process has the same law as the top component of a diffusion on $\R^n$ whose generator is a conjugation of the quantum Toda Hamiltonian. Remarkably, when $n=2$ this result is exactly the Matsumoto-Yor $2M-X$ theorem ($z_t$ is $\mathcal{Z}_{2,t}$ up to a change of variables). In both cases, there is an intertwining (between $\mathcal{Z}_{n,t}$ or $z_t$ and the driving $\R^n$ or $\R$ Brownian motion) which provides a fairly explicit formula for the Laplace transform  of the ``$z$" processes.
  While we have an analogous intertwining (Corollary \ref{cor:Intertwining}), the semigroup of Brownian motion on $GL_r$ does not have a sufficiently concrete expression to afford a better characterization of the law of $Z_t$.  Potentially one might be able to bypass the intertwining, and find some description of the joint law $(M_t, A_t)$, and so $Z_t$, by more direct means (again, there are several such routes at $r=1$ \cite{MY2}).

One might also consider various parts of the above program for different groups. In this general spirit, but from different directions, we point out the very recent papers of Chhaibi \cite{Chhaibi} and Bougerol \cite{Bougerol1}.  In the second reference, geometric considerations lead to a process similar in structure to our $Z_t$, but constructed from a Brownian motion on the group of complex lower triangular matrices with positive diagonal.  The singular values of this object are then shown to be Markov with generator given by a conjugation (by a polynomial function  in copies of the Macdonald function and their derivatives) of quantum Toda on a Weyl chamber. Any direct link to the formulas derived here $-$ to the generator of 
$Z_t$ (Theorem \ref{thm:MatrixXandZ}) or that for its singular values (see Lemma  \ref{XandZspec} below) $-$ is not immediately transparent. More simply, it is natural to ask what matrix laws beyond the Wishart can be constructed from of a ``Dufresne procedure" (back in the vein of Theorem 1).

\subsection*{Acknowledgements}

We thank  F.~Baudoin, G.~Let\'ac, and N.~O'Connell for their interest and many helpful discussions.  Thanks as well to D.W.~Stroock for pointers to the PDE literature,  and T.~Kurtz for assistance with the proof of Proposition \ref{prop:Zasymp}.  B.R. was supported in part by NSF grants DMS-1340489 and DMS-1406107, as well as grant 229249 from the Simons Foundation.  B.V. was supported in part by the NSF CAREER award DMS-1053280.

\section{The matrix Dufresne identity}
\label{sec:DufresneBasic}

We prove the basic matrix Dufresne identity in two different ways. Stated above as Theorems \ref{thm:MatrixDufresne} and \ref{thm:MatrixBessel}, they fall below under the headings ``Diffusion" and ``Feyman-Kac" proof. We also provide a sketch of a ``diffusion" proof of Corollary \ref{cor:ComplexCase}.

First  we summarize some of the properties of $M_t = M_t^{(\mu)}$. 
Using the Taylor expansion of the determinant near $I$  
%\[
%\det(I+A)=\sum_{k=0}^\infty \frac{1}{k!}\left(- \sum_{j=1}^\infty \tfrac{(-1)^j}{j} \Tr(A^j)\right)^k, \qquad \textup{if }\|A\|<1.
%\]
and It\^o's formula one finds that,
\begin{align}\label{detM00}
d \det M_t=\det M_t(\Tr dB+r(\tfrac12 +\mu) dt),
\end{align}
for $t\ge 0$. Hence, $\det M_t=\exp(\Tr B_t+\mu r t)$, and  $M_t$ is  almost surely invertible for all time. Then, by the linearity of the sde \eqref{Mt}  it follows that, for any $s>0$, 
the process $t\to M_{s,t}=M_s^{-1} M_{t}, t\ge s$ satisfies the same equation subject to  $M_{t,t}=I$. But that means that 
for fixed $s\ge 0$: 
\begin{align}\label{multincr}
\{M_{s,s+t}, t\ge 0\} \eqd \{M_t, t\ge 0\}, \qquad  \{M_{s,s+t}, t\ge 0\} \textup{ is independent of $\{ M_r, \, r \le s \}$.}
\end{align}
These properties  allow
a natural  extension of  $M_t$ to all $t\in \R$. First extend the matrix Brownian motion $B_t$ for all $t\in \R$, and consider the strong solution $\tilde M_t$ of the sde:
\[
d\tilde M_t=\tilde M_t dB_t+\tilde M_t (\tfrac12 +\mu) dt, \qquad \tilde M_{-1}=I, \quad  t\in [-1,0].
\]
Setting $M_t=\tilde M_0^{-1} \tilde M_t$ for $t\in [-1,0)$, the extended $M_t, t\ge -1$ process satisfies \eqref{multincr} for any $s\ge -1$. Repeating this procedure for earlier starting points defines a version of $M_t$ over the whole line. Importantly,  the resulting process has  properties \eqref{multincr} for each $s \in \R$ and  $t\to M_{s,t}$ satisfies  (\ref{Mt}) for $t\ge s$ with $M_{t,t}=I$.

To close, we state the following lemma on the norm growth of  $M_t$ which we will use repeatedly. The proof is deferred to the very end of the section.

\begin{lemma}
\label{normlemma}
Let $m_1 \le m_2  \le \cdots \le m_r$ be the singular values of $M_t = M_t^{(\mu)}$.   It holds that
$ \lim_{t \rightarrow \infty} \frac{1}{t} \log m_i(t) =   \mu +  \frac{ i -1}{2}$ with probability one for each $i = 1, \dots, r$.
\end{lemma}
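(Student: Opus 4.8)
The plan is to reduce everything to the ordered eigenvalue process $\Lambda_t=(\lambda_1(t)\ge\cdots\ge\lambda_r(t))$ of $Y_t=M_t^{(\mu)}(M_t^{(\mu)})^T$, which is exactly $m_{r+1-i}(t)^2=\lambda_i(t)$, and to analyze it in logarithmic coordinates. By the rotational invariance of $M_t$, and as recorded in the Remark following Theorem \ref{thm:MatrixBessel} (cf. \cite{Norris}), $\Lambda_t$ is itself a diffusion; its generator is the operator $G_\Lambda$ of that Remark but with $\mu$ replaced by $-\mu$ (the Remark is stated for $M^{(-\mu)}$, whereas the lemma concerns $M^{(\mu)}$). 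Expanding the second order part of $G_\Lambda$ shows the diffusion matrix is diagonal, so in the coordinates $\rho_k=\log\lambda_k$ one has, for $t>0$,
\[
 d\rho_k(t)=2\,d\beta_k(t)+\Bigl(2\mu-(r-1)+q_k(t)\Bigr)dt,\qquad q_k(t):=\sum_{\ell\neq k}\frac{2\lambda_k(t)}{\lambda_k(t)-\lambda_\ell(t)},
\]
with $\beta_1,\dots,\beta_r$ independent standard Brownian motions and the ordering $\rho_1(t)\ge\cdots\ge\rho_r(t)$ preserved. Two facts come for free: $\tfrac1t\beta_k(t)\to 0$ a.s., and summing the $r$ equations the $q_k$ cancel in pairs, giving $\sum_k\rho_k(t)=\log\det Y_t=2(\Tr B_t+r\mu t)$, so $\tfrac1t\sum_k\rho_k(t)\to 2r\mu$ (a consistency check).

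The crux is to show that the consecutive spreads $D_k(t):=\rho_k(t)-\rho_{k+1}(t)$ tend to $+\infty$ a.s. for $k=1,\dots,r-1$. Granting this, the $\ell$-th term of $q_k(t)$ equals $2/(1-e^{\rho_\ell(t)-\rho_k(t)})$, which converges to $2$ when $\ell>k$ and to $0$ when $\ell<k$, so $q_k(t)\to 2(r-k)$ a.s. A Cesàro argument (isolate the interval $[0,1]$, on which the integrand of $\int q_k$ is a.s.\ integrable, and use the pointwise limit on $[1,\infty)$) gives $\tfrac1t\int_0^t q_k(s)\,ds\to 2(r-k)$, and hence, dividing the displayed SDE by $t$, $\tfrac1t\rho_k(t)\to \bigl(2\mu-(r-1)\bigr)+2(r-k)$ a.s. Dividing by $2$ and relabelling through $\rho_{r+1-i}=2\log m_i$ then identifies $\lim_{t\to\infty}\tfrac1t\log m_i(t)$ as the value in the statement (the $r$ limits summing to $r\mu$, consistent with $\det M_t=\exp(\Tr B_t+r\mu t)$), and taking $i=r$ recovers the norm growth $\tfrac1t\log\|M_t\|\to\mu+\tfrac{r-1}{2}$ quoted before Theorem \ref{thm:MatrixDufresne}, for any matrix norm (all norms on $GL_r$ being comparable).

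The divergence of the spreads is where the real work is, and I expect it to be the main obstacle. The \emph{total} spread is unconditional: $d(\rho_1-\rho_r)=2(d\beta_1-d\beta_r)+(q_1-q_r)\,dt$, and since each summand of $q_1=\sum_{\ell>1}\tfrac{2\lambda_1}{\lambda_1-\lambda_\ell}$ exceeds $2$ (as $\lambda_1>\lambda_\ell$ for $t>0$) while $q_r=\sum_{\ell<r}\tfrac{2\lambda_r}{\lambda_r-\lambda_\ell}<0$, the drift $q_1-q_r$ exceeds $2(r-1)$ at all times; comparison with its martingale part (of size $O(\sqrt t)$) gives $\liminf_t\tfrac1t(\rho_1(t)-\rho_r(t))\ge 2(r-1)>0$. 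For the individual gaps, $D_k$ solves $dD_k=\sqrt{8}\,d\tilde\beta_k+\delta_k\,dt$, and a short computation gives $\delta_k=2\coth(D_k/2)+\epsilon_k$ with $\epsilon_k=\sum_{\ell\notin\{k,k+1\}}\bigl(\tfrac{2\lambda_k}{\lambda_k-\lambda_\ell}-\tfrac{2\lambda_{k+1}}{\lambda_{k+1}-\lambda_\ell}\bigr)\le 0$, where moreover $|\epsilon_k|$ is controlled by the \emph{neighbouring} spreads $D_{k-1},D_{k+1},\dots$ and becomes negligible once these are large. Since $2\coth(D_k/2)>2$ always and blows up near $D_k=0$, $D_k$ dominates a transient Bessel-type process away from $0$; combined with the positive drift of the total spread, a bootstrap (working inward from the extreme gaps, or arguing that $(D_1,\dots,D_{r-1})$ cannot be recurrent near the boundary of the positive cone) shows each $D_k(t)\to+\infty$ a.s. A softer alternative is to invoke the subadditive ergodic theorem for $t\mapsto\log\|\wedge^j M_t\|$ — whose increments are stationary and independent, inherited from those of $M_t$ in \eqref{multincr}, with the needed integrability since $M_1$ and $M_1^{-1}$ have moments of all orders — to obtain existence of $\gamma_k':=\lim\tfrac1t\rho_k(t)$; feeding this back into the SDE forces $\tfrac1t\int_0^t q_k\,ds$ to converge, and the limit is pinned to $2(r-k)$ using strict ordering of the $\gamma_k'$, which in turn follows from the unconditional positive drift of \emph{every} spread $\rho_k-\rho_\ell$. (One may also simply quote the known asymptotics for the radial part of Brownian motion with drift on the symmetric space $GL_r(\R)/O(r)$.) Either way one lands on the asserted rates.
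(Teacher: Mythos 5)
Your reduction to the log--eigenvalues of $Y_t=M_tM_t^T$ and the SDE $d\rho_k=2\,d\beta_k+(2\mu-(r-1)+q_k)\,dt$ are exactly right (this is the paper's starting point too), but the step you yourself identify as ``the real work'' --- that every consecutive spread $D_k=\rho_k-\rho_{k+1}$ diverges, equivalently that the exponential rates are strictly separated --- is not actually established by either of your routes. In route (a) the $2\coth(D_k/2)$ term is only an \emph{upper} bound for the drift of $D_k$ (your $\epsilon_k\le 0$), and $\epsilon_k$ is not harmless: if the neighbouring gap is tiny, a term such as $-\frac{2\lambda_{k+1}}{\lambda_{k+1}-\lambda_{k+2}}$ makes $\epsilon_k$ arbitrarily negative, so $D_k$ does not dominate a transient one-dimensional diffusion without a priori control of the other gaps --- precisely the circularity your ``bootstrap'' would have to break, and no argument is given. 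In route (b), Kingman does give existence of the limits $\gamma_k'$, but your pinning of them rests on ``the unconditional positive drift of \emph{every} spread $\rho_k-\rho_\ell$'', which is false: only the extreme spread $\rho_1-\rho_r$ has unconditionally positive drift; already for $r=3$ the difference $q_1-q_2$ contains $\frac{2\lambda_1}{\lambda_1-\lambda_3}-\frac{2\lambda_2}{\lambda_2-\lambda_3}$, which is negative and blows up as $\lambda_2\downarrow\lambda_3$, so intermediate spreads can have very negative drift. (Quoting the radial-part asymptotics on $GL_r(\R)/O(r)$ is citing the result, not proving it.) The paper closes this gap without ever proving divergence of the spreads first: by the same sign observation $\epsilon_k\le0$, each consecutive gap is bounded \emph{above} (comparison with the same driving noise) by a diffusion $z$ with drift $2\coth(z/2)$, hence $z_t/t\to 2$; on the other hand the unconditional bounds on the extreme interactions (top interaction $\ge r-1$, bottom $\le-(r-1)$ in the $\frac{\lambda_i+\lambda_j}{\lambda_i-\lambda_j}$ normalization) force the total spread $\rho_1-\rho_r$ to grow at rate at least $2(r-1)$. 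Since the $r-1$ gaps each grow at rate at most $2$ and their sum grows at rate at least $2(r-1)$, each gap grows at rate exactly $2$, and the extremal bounds (or your determinant identity) then pin every individual rate. Replacing your bootstrap by this squeeze makes the argument complete and renders Kingman unnecessary.

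A second point you should not gloss over: your computation does \emph{not} return the displayed constants, and cannot. With the gaps of $\rho=\log\lambda$ growing at rate $2$, what you obtain is $\frac1t\log m_i\to \mu+(i-1)-\frac{r-1}{2}$, and these values sum to $r\mu$, exactly as your own determinant check demands; the constants $\mu+\frac{i-1}{2}$ in the statement sum to $r\mu+\frac{r(r-1)}{4}$ and so cannot all be correct for $r\ge 2$. The two agree only at $i=r$, i.e.\ for the top exponent $\mu+\frac{r-1}{2}$, which is the only value the paper actually uses (norm decay of $M_t^{(-\mu)}$, finiteness of $A_\infty^{(-\mu)}$, separation of scales). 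So state the limits you actually derived --- they are the correct ones --- rather than asserting, as you do, that they coincide with the formula in the statement for every $i$.
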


\subsection{Diffusion proof}

We actually prove Theorem \ref{thm:MatrixDufresne}  in two different ways as well.  For completeness we first indicate how everything works  directly 
through the matrix coordinates. The proof is somewhat more transparent in eigenvalue/eigenvector 
coordinates, and we carry out that approach afterwards.

\subsubsection*{Via matrix coordinates}

Recall the definition of $M_t=M_t^{(-\mu)}$ with the convergent choice of sign for the drift:
\begin{equation}\label{Magain}
   dM_t = M_t dB_t + ( \tfrac12- \mu) M_t dt, \qquad M_0 = I, \quad t\ge 0
\end{equation}
and of course $2 \mu > r-1$.

Consider the version of this process extended to the whole line (as described just above), and then introduce the time reversed process $N_t = M_{-t}$. We claim that $N_t$ is also a Brownian motion on $GL_r$, but with drift $\mu$ instead of  $-\mu$. In particular, for $t\ge 0$ it solves the SDE
 \begin{align}\label{SDE1}
d N_t= N_t d\tilde B_t+(\frac12+ \mu)  N_t dt , \qquad  N_0=I, \quad t\ge 0,
\end{align}
where $d\tilde B_t =-dB_{-t}$. A quick (but formal) explanation for this statement would follow from  $(I+dB_t+(\tfrac12-\mu)I dt)^{-1}-I\approx  -dB_t-(\tfrac12-\mu)I dt+dB_t dB_t$ and $dB_t dB_t =I dt$. For the precise proof one needs to first verify that $N_t$ also satisfies the stationary and independent increment property as $M_t$, and then to show that  if $M_t$ solves \eqref{Magain} on say $t\in [0,1]$,  the process $\tilde N_t=M_1^{-1} M_{1-t}, t\in [0,1]$ will solve the same sde with $+\mu$ instead of $-\mu$ and with $-dB_{1-t}$ playing the role of $dB_t$.

From this point (with a small abuse of notation) we will drop the tilde from  $d\tilde B_t$ and just use $dB_t$ for the noise in $N_t$.%

Next consider $ N_{s,t}= N_t^{-1} N_{s+t}$ for any fixed $t$. Then $N_{s,t }\eqd N_{s}$, again as processes in $s\in \R$, and  if we further define
\begin{equation}
\label{Qdiffusion}
Q_t:=\int_{-\infty}^0  N_{s,t} N_{s,t}^T ds=N_t^{-1} \left(\int_{-\infty}^t  N_s^{} N_s^T ds \right) N_t^{-T} ,
\end{equation}
we have at last a process that is stationary with marginal law given by that of 
$$
Q_0 =  \int_{-\infty}^0 N_s N_s^T ds \eqd \int_0^{\infty} M_t M_t^T dt.
$$
The proof of Theorem \ref{thm:MatrixDufresne} then comes down to the following.

\begin{proposition} Let $2\mu>r-1$. The process   $t \mapsto Q_t$ is  a diffusion corresponding to the matrix sde
\begin{equation}
\label{basic_Qprocess}
dQ_t=I dt + (1-2\mu) Q_t dt+\Tr Q_t I dt-dB_t Q_t-Q_t dB_t^T.
\end{equation}
If  $Q_0 \in \mathcal{P}$ then  $Q_t$ remains in $\mathcal{P}$ for all $t>0$ with $\gamma_{2\mu}^{-1}$ as  its unique invariant measure. 
\end{proposition}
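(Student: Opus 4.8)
The plan is to base everything on the single representation
\[
Q_t \;=\; N_t^{-1}\Bigl(\, Q_0 + \int_0^t N_s N_s^{T}\,ds \,\Bigr) N_t^{-T},
\]
where $N_t$ is the $GL_r$-Brownian motion with drift $\mu$ driven by $B_t$ (so $N_t\eqd M_t^{(\mu)}$); for $Q_0=\int_{-\infty}^0 N_s N_s^T ds$ this is \eqref{Qdiffusion} since $N_0=I$, and for an arbitrary $Q_0\in\mathcal P$ I would take the right-hand side as the candidate solution of \eqref{basic_Qprocess}. Everything is finite and non-exploding for $t\ge0$: $N_t$ is invertible for all time, and $2\mu>r-1$ forces $\|N_s\|^2\to0$ as $s\to-\infty$ through Lemma \ref{normlemma}, so $\int_{-\infty}^0 N_s N_s^T ds$ converges. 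The aim is then to read off the sde, the positivity, the invariant law, and its uniqueness all from this one formula.

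For the sde I would first compute $d(N_t^{-1})$ by the matrix It\^o formula: from $N_t^{-1}dN_t=dB_t+(\tfrac12+\mu)dt$ and $dB_t\,dB_t=I\,dt$ this gives $d(N_t^{-1})=-(dB_t)N_t^{-1}+(\tfrac12-\mu)N_t^{-1}\,dt$, and transposing, $d(N_t^{-T})=-N_t^{-T}dB_t^{T}+(\tfrac12-\mu)N_t^{-T}\,dt$. Then I would apply the It\^o product rule to $Q_t=N_t^{-1}R_tN_t^{-T}$ with $R_t=Q_0+\int_0^tN_sN_s^Tds$, so that $dR_t=N_tN_t^Tdt$ is of finite variation: the cross terms with $dR_t$ drop, the middle term $N_t^{-1}(dR_t)N_t^{-T}$ is $I\,dt$, the two outer terms together give $-(dB_t)Q_t-Q_t dB_t^{T}+(1-2\mu)Q_t\,dt$, and the only remaining second-order term $d(N_t^{-1})R_t\,d(N_t^{-T})$ equals $\Tr(Q_t)\,I\,dt$ via the identity $(dB_t)P(dB_t^{T})=\Tr(P)\,I\,dt$, valid for any matrix $P$. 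Summing reproduces \eqref{basic_Qprocess}. Because the drift $Q\mapsto I+(1-2\mu)Q+\Tr(Q)I$ and the diffusion coefficient are affine in $Q$, equation \eqref{basic_Qprocess} has a pathwise-unique, non-exploding strong solution, so the solution flow is Markov and Feller and the process above is exactly the diffusion it defines (its generator, read off from \eqref{basic_Qprocess}, agreeing with $G_Y$ of Theorem \ref{thm:MatrixBessel} up to the drift).

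Positivity I would get straight from the representation: for $Q_0\in\mathcal P$ the matrix $R_t=Q_0+\int_0^tN_sN_s^Tds$ is positive definite for every $t$ (positive definite plus nonnegative definite), and conjugation by the invertible $N_t^{-1}$ keeps it in $\mathcal P$, so $Q_t\in\mathcal P$ for all $t\ge0$, and by uniqueness this is the solution of \eqref{basic_Qprocess} from $Q_0$. To exhibit an invariant law I would note that the $Q_t$ of \eqref{Qdiffusion} is stationary, being a fixed functional of the stationary multiplicative increments of $N$ coming from \eqref{multincr}; its one-dimensional marginal is $Q_0=\int_{-\infty}^0 N_sN_s^Tds\eqd\int_0^\infty M_t^{(-\mu)}(M_t^{(-\mu)})^Tdt$, which is $\gamma_{2\mu}^{-1}$ by Theorem \ref{thm:MatrixDufresne}. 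Hence $\gamma_{2\mu}^{-1}$ is invariant for \eqref{basic_Qprocess}.

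The main obstacle is uniqueness of the invariant measure, where I would use a one-step coupling through the common driving noise. Running two initial points $Q_0,Q_0'\in\mathcal P$ on the same $B$, hence the same $N$, the representation yields the exact cancellation $Q_t-Q_t'=N_t^{-1}(Q_0-Q_0')N_t^{-T}$, so $\|Q_t-Q_t'\|\le\|N_t^{-1}\|^2\,\|Q_0-Q_0'\|\to0$ almost surely, since by Lemma \ref{normlemma} the smallest singular value of $N_t\eqd M_t^{(\mu)}$ grows like $e^{\mu t}$ (using $\mu>\tfrac{r-1}{2}\ge0$). Thus any two solutions couple, and given invariant laws $\nu,\nu'$ one starts $Q_0\sim\nu$ and $Q_0'\sim\nu'$ independently of $B$: then $Q_t\sim\nu$ and $Q_t'\sim\nu'$ for all $t$ while $Q_t-Q_t'\to0$, so $|\int f\,d\nu-\int f\,d\nu'|=|\E[f(Q_t)-f(Q_t')]|\to0$ for every bounded uniformly continuous $f$, which forces $\nu=\nu'$. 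I expect the It\^o derivation of \eqref{basic_Qprocess} to be the bulkiest (but entirely mechanical) piece, whereas the decisive ingredient is the almost sure decay $\|N_t^{-1}\|\to0$ from Lemma \ref{normlemma}, which is precisely what makes the representation both positivity-preserving and contracting, and so simultaneously yields Markovianity, invariance of $\gamma_{2\mu}^{-1}$, and its uniqueness.
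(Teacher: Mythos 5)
Your It\^o derivation of \eqref{basic_Qprocess} from the representation $Q_t=N_t^{-1}\bigl(Q_0+\int_0^t N_sN_s^T\,ds\bigr)N_t^{-T}$ is essentially the computation in the paper (the paper does it for $Q_0=\int_{-\infty}^0 N_sN_s^T\,ds$, i.e.\ for \eqref{Qdiffusion}). Where you genuinely depart from the paper is in the last two assertions, and there your route is in part cleaner: (a) positivity of $Q_t$ falls out of the closed-form representation plus pathwise uniqueness for the linear sde, whereas the paper compares $\det Q_t$ with a geometric Brownian motion via \eqref{detsde}; (b) uniqueness of the invariant law via synchronous coupling, $Q_t-Q_t'=N_t^{-1}(Q_0-Q_0')N_t^{-T}\to 0$ almost surely because the smallest singular value of $N_t$ grows like $e^{\mu t}$ by Lemma \ref{normlemma}, replaces the paper's heavier machinery (hypoellipticity of $\partial_t-G_Q^*$ via Stroock, positivity of the transition density, and an integration by parts with boundary control along $\det Q=0$). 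Both (a) and (b) are correct, more elementary than the paper's arguments, and the coupling even yields convergence to equilibrium, not just uniqueness.

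The genuine gap is the step that identifies $\gamma_{2\mu}^{-1}$ as an invariant measure: you obtain it by quoting Theorem \ref{thm:MatrixDufresne}, but this Proposition is introduced in the paper precisely as the engine of the first (``diffusion'') proof of that theorem, which deduces $A_\infty^{(-\mu)}\eqd\gamma_{2\mu}^{-1}$ from the Proposition by noting that the stationary marginal of \eqref{Qdiffusion} must coincide with the unique invariant law. As written your argument is therefore circular; it can only be salvaged by explicitly leaning on the independent Feynman--Kac proof (Theorem \ref{thm:MatrixBessel}), at the cost of the Proposition no longer providing a second proof of the Dufresne identity. To keep the statement self-contained one must identify the invariant law directly, as the paper does: check that the formal adjoint $G_Q^*$ in \eqref{QAdjoint} annihilates $f(Q)=(\det Q)^{-\mu-\frac{r+1}{2}}e^{-\frac12 \Tr Q^{-1}}$ and justify the absence of boundary terms in the integration by parts. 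With that computation added, your proof would be complete, and (a), (b) would stand as a genuinely different, softer treatment of the positivity and uniqueness parts.
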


\begin{proof} 
 Applying It\^ o's formula in \eqref{Qdiffusion} we find that
\begin{align*}
dQ_t=&Idt+ d  N^{-1}_t \left(\int_{-\infty}^t  N_s N_s^T ds \right) N_t^{-T}+
N_t^{-1}  \left(\int_{-\infty}^t  N_s N_s^T ds \right) d N^{-T}_t
\\&+d N^{-1}_t \left(\int_{-\infty}^t  N_s N_s^T ds \right)d N^{-T}_t.
\end{align*}
We also have that, 
\begin{align*}
 d N^{-1}_t=&-N_t^{-1} dN_t^{} N_t^{-1}+N_t^{-1} dN_t N_t^{-1}dN_t N_t^{-1} \\
% =&-(dB+(\mu+ \frac12)I dt)N_t^{-1}+dBdBN_t^{-1} \\
 =&- dB_t^{}  N_t^{-1}+ (\frac12 - \mu) N_t^{-1} dt,
 \end{align*}
 after substituting in \eqref{SDE1} and using $dB_t dB_t = I dt$ in the second term of line one.
Combined, this produces,
\begin{align*}
dQ_t =I dt-(dB+(\mu-\frac12)I dt) Q_t-Q_t(dB+(\mu-\frac12)I dt)^T+dB Q_t dB^T,
\end{align*}
which simplifies to \eqref{basic_Qprocess} on account of the rule  $dB_t C dB_t^T = (\Tr C) I dt$ for any matrix $C$.

To see that $Q_t$, defined by  \eqref{basic_Qprocess}  and a fixed starting point $Q_0  \in \mathcal{P}$, remains in $\mathcal{P}$ for all time, 
apply It\^o's formula yet again, now to $t \mapsto \det (Q_t)$:
\begin{align}
\label{detsde}
 d\det(Q_t) & = \det(Q_t) \left(  \Tr[ Q_t^{-1} dQ_t ] + \frac{1}{2}  (\Tr [ Q_t^{-1} dQ_t] )^2 - \frac{1}{2} \Tr [Q_t^{-1} dQ_t Q_t^{-1} dQ_t]   \right) \\
                   & = \det(Q_t) \left( - 2 \Tr dB_t   + \Tr Q_t^{-1} dt + (2r - 2 \mu ) dt \right). \nonumber
\end{align}
For the second line we used 
$
  \Tr [ Q_t^{-1} dQ_t Q_t^{-1} dQ_t ] = \Tr[  Q_t^{-1} (dB)^2 Q_t] + \Tr  (dB)^2 +  2 \Tr  [Q^{-1} dB Q dB^T].
$
Introducing $b_t = - \Tr B_t /\sqrt{r}$ and 
$$
   dz_t = 2 \sqrt{r} z_t db_t + (2r - 2\mu) z_t dt, \quad z_0 = \det(Q_0) > 0,
$$
we see that  $t \mapsto \det(Q_t)$ is bounded below by the geometric Brownian motion $z_t$ 
up to the first passage of $\det (Q_t)$ to zero. But  $z_t$ never vanishes, and so that passage time must be infinite. 
%That is, $\det (Q_t) >0$ for all $t >0$.

The next ingredients are to show that $Q_t$ admits a smooth positive transition density on $P$, and then to identify the inverse Wishart law $\gamma_{2\mu}^{-1}$ as an invariant measure. Since the latter also has a positive density with respect to Lebesgue measure on $\mathcal{P}$, it will follow that this is the unique invariant measure for $Q_t$. 

The generator of $Q_t$ can be succinctly expressed in matrix coordinates as in,
\begin{equation}
\label{QGenerator}
  G_Q = 2  \Tr  \left( Q^2 (\frac{\partial}{\partial  Q} )^2 \right) 
    + (1- 2 \mu) \Tr \left( Q \frac{\partial}{\partial Q} \right)  + (1 + \Tr  Q) \Tr  \left( \frac{\partial}{\partial Q} \right),
\end{equation}
where again $\frac{\partial}{\partial Q}$ is the matrix-valued operator $[\frac{\partial}{\partial Q}]_{ij} =  ( \frac{1}{2} + \frac{1}{2} \delta_{ij} ) \frac{\partial}{\partial Q_{ij}} $. The second order part of
\eqref{QGenerator} is verified by writing out
$$
  (dB Q + Q dB^T)_{ij} (dB Q + Q dB^T)_{k\ell}
  = [Q^2]_{j\ell} \delta_{ik} + [Q^2]_{i\ell} \delta_{jk} + [Q^2]_{jk} \delta_{i \ell} + [Q^2]_{ik} \delta_{j\ell},
$$
and summing over $i\le j$ and $k \le \ell$. One then checks that the adjoint takes the form,
\begin{align}
\label{QAdjoint}
 G_Q^*  =  \ & 2  \Tr  \left( Q^2 (\frac{\partial}{\partial  Q} )^2 \right) 
    + ( 2 \mu + 2r +3 ) \Tr \left( Q \frac{\partial}{\partial Q} \right)  + ( \Tr Q - 1) \Tr  \left( \frac{\partial}{\partial Q} \right)  \\
    &  +
     \mu r (r+1) +   \frac{1}{2} r (r+1)^2,   \nonumber
 \end{align}
when restricted to act on functions of a symmetric matrix variable.

To invoke the necessary  regularity estimates we temporarily consider the ``vectorized" $Q_t$, or 
$\vect(Q_t) = (Q_{11}(t), Q_{22}(t), \dots ) \in \R^{\frac{r(r+1)}{2}}$. In particular, we show that the diffusion matrix written in these coordinates is positive definite on the open set $\mathcal{P} \subset \R^{\frac{r(r+1)}{2}}$.  This comes down to proving: if
 $Q$ is a positive  definite matrix and $Z$ is an $r\times r$ matrix normal with iid entries then the covariance matrix of $\vect(ZQ+QZ^T)$ is positive definite. 
To see this, take any  $r\times r$ orthogonal matrix $O$ and let $K_O$ be the linear map on $\R^{\frac{r(r+1)}{2}}$ defined by $\vect(O^T Q O)=K_O \vect(Q)$. It is easy to verify that $|\det K_O|=1$. Next write the spectral decomposition of $Q=O^T \Lambda O$, and note that
 the desired covariance matrix satisfies: 
\begin{align*}
& \ev \vect(ZQ+QZ^T) \vect(ZQ+QZ^T)^T\\
%&\qquad =E \vect(ZO^T \Lambda O+O^T \Lambda OZ^t) \vect(ZO^T \Lambda O+O^T \Lambda OZ^t)^t\\
%&\qquad =E \vect(O^T (Z\Lambda + \Lambda Z^t)O) \vect(O^T (Z\Lambda + \Lambda Z^t)O)^t \\
&\qquad = \ev K_O \vect(Z\Lambda + \Lambda Z^T) \vect(Z\Lambda + \Lambda Z^T)^T K_O^T.
\end{align*}
Thus, by the first remark, we may assume that $Q$ is diagonal with entries $Q_{ii}>0$. But in that case the covariance matrix is diagonal with entries $4 [Q^2]_{ii}, 1\le i\le r$ and $[Q^2]_{ii}+[Q^2]_{jj}, 1\le i<j\le r$ which is clearly positive definite. 

At last then,  Theorem 3.4.1 of \cite{Stroock} (though see also  Remark 3.4.2 there) implies that $\partial_t - G_Q^*$ is hypoelliptic on $\mathcal{P}$.  At the same time a straightforward but tedious calculation will show that $G_Q^* f(Q) = 0 $ for $f(Q)  = (\det Q)^{-\mu -\frac{r+1}{2}} e^{-\frac12 \Tr Q^{-1}}$, the $\gamma_{2\mu}^{-1}$ density. For smooth test functions $h$ of compact support on $\mathcal{P}$ and $T_t$ the semigroup  of $Q_t$, 
we have
$$
     \frac{\partial}{\partial t} \int_{\mathcal{P}} (T_t h)(Q) f(Q) dQ  = \int_{\mathcal{P}} G_{Q} (T_t h)(Q) f(Q) dQ.
$$
An integration by parts would  continue the equality as $\int_\mathcal{P}  (T_t h)(Q) (G_{Q}^* f)(Q) dQ = 0$ and complete the proof. To justify this, that is, that there are no boundary terms, requires two facts. The first is that $T_t h(Q)$ and its normal derivative are bounded along the boundary $\det(Q)=0$. This can be established by writing $G_{Q}$ in local coordinates in the vicinity of $\det(Q)=0$, and working by comparison with one-dimensional process \eqref{detsde} whose semi-group 
is readily seen to have the desired property at the origin. The second is to check by a simple computation that $\frac{\partial}{\partial Q_{ij}} f(Q)  |_{\det(Q)=0} = 0$ for all $i,j$, noting that $f(Q) |_{\det(Q) = 0} = 0$ is obvious.
\end{proof}

\subsubsection*{Via the eigenvalue law}

One could alternately argue that, since the law of  $Q_t$ defined in \eqref{basic_Qprocess} is invariant under rotations by the orthogonal group, it is enough to consider the motion of the eigenvalues. More convenient still, is to identify the Wishart law $\gamma_{2\mu}$ itself  by considering instead the eigenvalues of $P_t = Q_t^{-1}$. 

We have that,
\begin{equation}
\label{QInverse}
 d P_t = - P_t^2 dt+ (1+2 \mu) P_t dt+ \Tr P_t I dt - dB_t^T P_t - P_t dB_t,
\end{equation}
the solution of which, by similar reasoning as above, also remains in $\mathcal{P}$ for all time after starting from any point in the interior.
Further, $p_r \ge p_{r-1} \ge \cdots \ge p_1 \ge 0$, the ordered eigenvalues perform the joint diffusion,
\begin{equation}
\label{wishart_eigs}
   d p_i = - p_i^2 dt +(2\mu+2) p_i dt +  p_i  \sum_{j \neq i}  \frac{p_i + p_j}{p_i - p_j}  dt + 2 p_i db_i,
\end{equation}
with $\{ b_i \}_{i=1, \dots, r}$ independent standard Brownian motions. The above can be derived from \eqref{QInverse} by computing the It\^o differential of the corresponding spectral representation.
  We will do a sample of such a calculation below in a slightly more complicated context. It is by now standard that system \eqref{wishart_eigs} possesses a strong solution, that the paths $p_i = p_i(t)$ do not intersect for $t > 0$, and any initial condition $P_0$ with some $p_i(0) = p_{i+1}(0)$ is an entrance point, see for example \cite[\S 4]{AGZ}. 
 
The action of the corresponding generator $G_{p}$ can be expressed in the form,
 \begin{align}
 \label{Peig_Generator}
 G_p f
 & = \sum_{i=1}^r 2 p_i^2 \partial_i^2 f + 
      \sum_i \left(- p_i^2 +(2\mu+2) p_i +  p_i  \sum_{j \neq i}  \frac{p_i + p_j}{p_i - p_j} \right) \partial_i f  \\
  & = \sum_{i=1}^r  \partial_i( \phi( p_i) \partial_i f) - ( \phi( p_i) \partial_i V)  \partial_i f,  \nonumber
 \end{align}
 where, 
 \begin{equation}
 \label{WishartPotential}
   \phi(p_i)  = 2 p_i^2, \qquad 
   V(p)= - \sum_{i> j} \log(p_i-p_j) +  \frac{1}{2} \sum_{i=1}^r    p_i -    (\mu - \frac{r+1}{2} )  \sum_{i=1}^r \log p_i,
\end{equation}
restricted to the Weyl chamber $\Sigma_r = \{ p_r  \ge \cdots \ge p_1 \ge p_0=0 \} \subset \R^r$.
Now $e^{-V(p)} \ind_{\Sigma_r}(p)$ is recognized as (after a suitable normalization) the joint density of eigenvalues for the real Wishart ensemble $\gamma_{2\mu}$, and the form of $G_p$ in line two of \eqref{Peig_Generator} is particularly suited for verifying that
$ G_p^* (e^{-V(p)} ) =0$.

To identify $e^{-V(p)}$ as the invariant measure, we only have to deal with the same integration by parts issue that came up when working with matrix coordinates. In this case Appendix A of \cite{ESY} explains why for smooth $h$ of compact support in $\Sigma_r$ we have that $x \mapsto \ev_x [ h(p_1(t), \dots, p_r(t) ) ]$ along with its normal derivative are bounded at the seams $p_{i+1}=p_{i}$ (or boundary of $\Sigma_r$). After that a quick calculation yields $  \lim_{p_{i+1} \rightarrow p_i} \frac{\partial}{\partial (p_{i+1}+p_i)} e^{-V(p)}  = 0$.

\subsubsection*{Proof of Corollary \ref{cor:ComplexCase}}

For the complex and quaternion cases it is a bit more constructive to go through eigenvalue/eigenvector coordinates. The starting point remains the same: a matrix diffusion $t \mapsto Q_t$ is constructed which has the desired distribution as its invariant measure (assuming the latter exists and is unique). 

Corollary \ref{cor:ComplexCase} considers two setting. The analogs of \eqref{basic_Qprocess} are:  
\begin{equation}
\label{flatQ}
  d Q_t  =    I dt + (\frac{2}{\beta} - 1 - 2 \mu) Q_t dt  + \Tr Q_t dt  + dB_t Q_t  + Q_t dB_t^{\dagger},  
\end{equation}
for the $U_r(\mathbb{F})$-invariant noise $t \mapsto B_t$, and 
\begin{equation}
\label{funnyQ}
  d Q_t  =    I  dt +  (\frac{1}{\beta} - 2 \mu ) Q_t dt    + (\frac{1}{\beta} -1) \mbox{diag}(Q_t) dt  + \Tr Q_t dt
                + d W_t Q_t  + Q_t d W_t^{\dagger},
\end{equation}
for the case of the particular structured $\beta=1,2,4$ noise $t \mapsto W_t$ appearing in the original spiked random matrix problem.
Here $ \mbox{diag}(Q_t) $ is the diagonal matrix with the same diagonal as $Q_t$. This corresponding term in the sde is shows up because of 
 $dW_t  Q_t dW_t^{\dagger}=(\frac{1}{\beta} -1) \mbox{diag}(Q_t) dt  + \Tr Q_t dt$.

The key observation is: 
 
\begin{proposition}
\label{Prop:commoneigs}
In either setting \eqref{flatQ} or \eqref{funnyQ}, the eigenvalues of $P_t  = Q_t^{-1}$ are Markovian with common sde:
\begin{equation}
\label{CommonEigSDE}
d p_i =   - p_i^2  dt   + ( 2 \mu + \frac{2}{\beta}) p_i dt + p_i \sum_{j \neq i} \frac{p_i + p_j}{p_i- p_j}  dt + \frac{2}{\sqrt{\beta}} p_i db_i,
\end{equation}
for $\beta=1,2,$ or $4$.
\end{proposition}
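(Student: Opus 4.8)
The plan is to reduce each of the two matrix stochastic differential equations \eqref{flatQ} and \eqref{funnyQ} to the common spectral system \eqref{CommonEigSDE} by a direct It\^o computation on the eigenvalue map. First I would note that $P_t = Q_t^{-1}$ in each case satisfies an equation of the same shape as \eqref{QInverse} (this is a routine application of It\^o's formula to $Q \mapsto Q^{-1}$, together with the identity $dB\, C\, dB^{\dagger}$ for the relevant noise; in the flat $U_r(\mathbb{F})$-invariant case one gets $dB\, C\, dB^\dagger = \tfrac{2}{\beta}(\Tr C) I\, dt$ after incorporating the $\mathbb{F}$-valued normalization, while in the structured $W_t$ case the extra $\mathrm{diag}$ term that was already recorded in the text after \eqref{funnyQ} reappears). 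The essential point is that in both models the {\em noise is $U_r(\mathbb{F})$-invariant in law}, so the eigenvector dynamics decouple and only the eigenvalues $p_1 \le \cdots \le p_r$ evolve autonomously.

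Next I would carry out the spectral It\^o calculus explicitly. Write $P_t = U_t \,\mathrm{diag}(p_1,\dots,p_r)\, U_t^{\dagger}$ and use the standard first- and second-order perturbation formulas for eigenvalues of a symmetric/Hermitian matrix: $dp_i = (dP_t)_{ii} + \sum_{j \ne i} \frac{|(dP_t)_{ij}|^2}{p_i - p_j}$, where the off-diagonal quadratic-variation term produces the level-repulsion drift $p_i \sum_{j\ne i} \frac{p_i+p_j}{p_i-p_j}\,dt$ (the $p_i+p_j$ numerator rather than a constant comes from the multiplicative structure $dB_t^{\dagger} P_t + P_t dB_t$, exactly as in the $\beta=1$ computation leading to \eqref{wishart_eigs}). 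The $-p_i^2$ drift comes from the $-P_t^2$ term in the $P_t$-equation; the linear drift coefficient is $2\mu + \tfrac{2}{\beta}$, which one should check is exactly what results from combining the $(1+2\mu)P_t$-type term, the $\Tr P_t\, I$ term restricted to the $i$-th eigenvalue, and the It\^o correction coming from the variance $\tfrac{2}{\beta}$ of the diagonal noise entries. The diffusion coefficient of $p_i$ is $\tfrac{2}{\sqrt{\beta}} p_i$, since the diagonal entry of $dB_t^{\dagger} P_t + P_t dB_t$ in the eigenbasis has mean-square $\tfrac{4}{\beta} p_i^2\, dt$; the resulting driving martingales are mutually orthogonal, hence can be written via independent standard Brownian motions $b_i$. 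One subtlety worth a sentence: in the structured case \eqref{funnyQ} the extra $(\tfrac{1}{\beta}-1)\mathrm{diag}(Q_t)$ term must be tracked through the inversion $Q \mapsto Q^{-1}$ and the eigenvector conjugation, and one verifies it combines with the diagonal part of the It\^o correction so that the {\em same} coefficient $2\mu + \tfrac{2}{\beta}$ emerges --- this is the reason the two a priori different matrix processes share one eigenvalue law.

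The main obstacle I anticipate is not conceptual but bookkeeping: keeping the $\mathbb{F}$-dependent normalizations straight across the three steps (forming the $P_t$-equation, computing $dp_i$, and collecting the It\^o corrections) so that every $\beta$-dependent constant lands exactly on $2\mu+\tfrac{2}{\beta}$, $-p_i^2$, and $\tfrac{2}{\sqrt\beta}p_i$ as claimed, and in particular confirming that the structured-noise diagonal term in \eqref{funnyQ} does not leave a residual $\mathrm{diag}$ contribution in the eigenvalue drift once one passes to the eigenbasis. A secondary point is to argue, as in the $\beta=1$ case, that \eqref{CommonEigSDE} has a unique strong solution with non-colliding paths and with coincident initial eigenvalues acting as entrance points, so that "the eigenvalues are Markovian" is a well-posed assertion; this follows from the same references (\cite{AGZ}, \cite{ESY}) cited for \eqref{wishart_eigs}, since for $\beta \ge 1$ the repulsion is at least as strong.
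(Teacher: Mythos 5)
Your overall route---It\^o calculus on the spectral decomposition $P_t = U_t^{\dagger}\Lambda_t U_t$, with the second-order perturbation formula supplying the $\frac{p_i+p_j}{p_i-p_j}$ repulsion and the remaining drifts read off termwise---is the same as the paper's, and for the isotropic model \eqref{flatQ} your plan is correct; the paper likewise treats that case as essentially self-evident.

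The gap is in your treatment of \eqref{funnyQ}. You assert that ``in both models the noise is $U_r(\mathbb{F})$-invariant in law, so the eigenvector dynamics decouple.'' That is false for the structured noise $W_t$: its diagonal entries are real Brownian motions with mean-square $\beta^{-1}t$ while its off-diagonal entries are unit $\mathbb{F}$-valued, so $U W_t U^{\dagger}$ is \emph{not} equal in law to $W_t$. Consequently the quadratic covariations of the conjugated noise $\mathsf{W}=UWU^{\dagger}$ in the eigenbasis are genuinely eigenvector-dependent---the paper computes $d\mathsf{W}_{ij}\,d\overline{\mathsf{W}}_{ij} = dt - \tfrac12\sum_{\ell}|u_{i\ell}|^2|u_{j\ell}|^2\,dt$ and a nonzero $d\mathsf{W}_{ij}\,d\mathsf{W}_{ji}$ in \eqref{newnoise1}---and the conjugated drift $U\,\mathrm{diag}(P)\,U^{\dagger}$ is eigenvector-dependent as well, as in \eqref{diagdrift}. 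The entire content of the proposition for \eqref{funnyQ} is that these two $U$-dependent contributions cancel exactly (the last term of \eqref{interaction1} against the last term of \eqref{diagdrift}); this is why the paper emphasizes that the Markov property of the eigenvalues in the structured case is only seen by going through the calculation. Under your invariance premise the quadratic variations would be $U$-independent, so the residual $U$-dependent piece of the conjugated $\mathrm{diag}$ drift would have nothing to cancel against and the eigenvalue system would not close. You do gesture at the right cancellation (``it combines with the diagonal part of the It\^o correction''), but your stated premise contradicts the mechanism that produces it; to repair the argument you must drop the invariance claim for $W_t$, compute the $U$-dependent covariances of $UWU^{\dagger}$ explicitly, and exhibit the cancellation. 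Your remarks on strong existence, non-collision, and entrance points for \eqref{CommonEigSDE} are fine and match the paper's appeal to the $\beta=1$ references.
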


Compare \eqref{wishart_eigs}, noting the overlap at $\beta=1$.
That the eigenvalues of \eqref{flatQ} are Markov is self-evident. For \eqref{funnyQ} we only see it by going through the calculation, which we defer to the end of the proof. Note that one can use \eqref{CommonEigSDE} after that fact to see that $P_t$ (and so $Q_t$) remains in $\mathcal{P}$ for all time.

The upshot is that, for the eigenvalue motion(s), the argument is now precisely the same as in the $\beta=1$ case. The corresponding 
$\beta =2$ or $4$ generator $G_{\beta,p}$ has the same form as \eqref{Peig_Generator}, with \eqref{WishartPotential} replaced by:
$$
  \phi(p_i)  = \frac{2}{\beta} p_i^2, \qquad 
   V(p)= -  \beta \sum_{i> j} \log(p_i-p_j) +  \frac{\beta}{2} \sum_{i=1}^r    p_i -   \beta  
   \left( \mu - \frac{r+ 2/\beta- 1}{2}  \right)  \sum_{i=1}^r \log p_i.
$$
It follows that $G_{\beta, p}^* (e^{-V(p)}) = 0$.  And, as it has to be,  
 $e^{-V(p)} \ind_{\Sigma_r}(p)$ is proportional to the complex/quaternion  Wishart eigenvalue density. 
For the isotropic setting one then has the full $\mathbb{F}$-Wishart law, as the eigenvector process of \eqref{flatQ} clearly has the Haar measure on $U_r(\mathbb{F})$ as its unique invariant measure.

\begin{proof}[Proof of Proposition \ref{Prop:commoneigs}]
It\^o's formula shows that if $Q$ solves \eqref{flatQ} or \eqref{funnyQ} then $P$ solves the SDE analogue to \eqref{QInverse}, with $\dagger$ in place of the transpose and an extra $(\tfrac1{\beta}-1) \textup{diag}(P) dt$ term in the second case. 

At this point we make two simplifications. For clarity we carry out the computation for $\beta=2$ only (that the $\beta=4$ case will go through in the same way will be clear). Also, we consider the simplified matrix sde:
\begin{equation}
\label{newfunny}
   d  P_t =  F(P_t) dt   + d W_t P_t  + P_t d W_t^{\dagger}, \quad F(P) = -\frac12 \mbox{diag}(P), 
\end{equation}
where again $W_t$ has independent real Brownian motions with variance $\frac12 t$ on the diagonal and independent 
unit complex Brownian motions elsewhere. The point is that \eqref{newfunny} retains everything ``non isotropic" in \eqref{funnyQ}. That 
the corresponding isotropic case  (with $B_t$ replacing $W_t$ and no $\mbox{diag}(\cdot)$ term in the drift) produces the same answer will also become clear in the course of the proof.

Either way, the strategy is standard. Write
$P_t = U_t^{\dagger} \Lambda_t U_t$  for $\Lambda_t$ the diagonal matrix of eigenvalues $(\lambda_{1,t}, \dots, \lambda_{r,t})$ and
a unitary matrix $U_t$. Also introduce the notation,
\begin{equation}
\label{rules1}
   \mathsf{W}_t = U_t  W_t U^{\dagger}_t, \qquad  \quad d U_t U_t^{\dagger} = d\Gamma_t + dG_t.
\end{equation}
Note that the former is that it is not simply a copy of $W_t$.
The latter is the Doob-Meyer decomposition, with $\Gamma_t$ a local martingale and $G_t$ of  finite variation. 
Since $d (U_t U_t^{\dagger}) = 0$ one finds that
\begin{equation}
\label{NGRules}
   d \Gamma_t^{\dagger} = - d \Gamma_t, \quad \mbox{ and } \quad dG_t+dG_t^{\dagger} = - d\Gamma_t d\Gamma_t^{\dagger}, 
\end{equation}
in particular $d\Gamma_{ii}=0$. 
With this in hand an application of It\^{o}'s formula produces
\begin{align}
\label{EigSystem}
 d \Lambda =  & \, d \mathsf{W} \Lambda   +  \Lambda d {\mathsf{W}}^{\dagger} +  U F(U^{\dagger} \Lambda U )  U^{\dagger}dt + 
 (d \Gamma  \Lambda+  \Lambda d \Gamma^\dagger)
     +  (d G  \Lambda + \Lambda d G^\dagger) \\
       & \,  + d \Gamma \Lambda d \Gamma^{\dagger} + d\Gamma d \mathsf{W} \Lambda + d\Gamma \Lambda d \mathsf{W}^{\dagger} + d \mathsf{W} \Lambda d \Gamma^{\dagger} 
             +  \Lambda d \mathsf{W}^{\dagger} d \Gamma^{\dagger}.  \nonumber
\end{align}
As the martingale part of the right hand side must vanish off the diagonal we infer that,
\begin{equation}
\label{TheN}
    d\Gamma_{ij}
    % = \frac{  p_j  d  {\mathcal{W}}_{ij}  + p_i d {\mathcal{W}}^{\dagger}_{ij}}{p_i - p_j}
                   =  \frac{  \lambda_j d  {\mathsf{W}}_{ij}  + \lambda_i d \overline{ {\mathsf{W}}}_{ji}}{\lambda_i - \lambda_j}.
\end{equation}      
for $i \neq j$. Next, we write out \eqref{EigSystem} on the diagonal:
\begin{align}
\label{EigSystemA}
    d \lambda_i   =  &   \,    \lambda_i ( d  {\mathsf{W}}_{ii} +  d  \overline{\mathsf{W}}_{ii})  + \sum_{j} u_{ij} F_{jj}(U^{\dagger} \Lambda U) \bar u_{ij} dt \\
                                  & \, - \sum_{j \neq i}  (\lambda_i - \lambda_j) d \Gamma_{ij}  d \overline{ \Gamma}_{ij} 
                                   \nonumber \\
                                    &  + \sum_{j\neq i} ( \lambda_i d  {\mathsf{W}}_{ji} + \lambda_j  d \overline{  \mathsf{W}}_{ij}) d\Gamma_{ij}
                                     \,  + \sum_{j\neq i} ( \lambda_i d \overline{  \mathsf{W}}_{ji} + \lambda_j  d { {\mathsf{W}}_{ij}} ) 
                                       d \overline{ \Gamma}_{ij}, \nonumber
\end{align}
having used \eqref{NGRules}. We also record that,
\begin{align}
\label{diagdrift}
 \sum_{j} u_{ij} F_{jj}(U^{\dagger} \Lambda U) \bar u_{ij}  & = -\frac{1}{2}
  \sum_{j = 1}^{r}  
  \sum_{\ell=1}^r 
  u_{ij} \bar u_{\ell j} \lambda_\ell u_{\ell j} \bar u_{ij}=
  -\frac{1}{2}
  \sum_{j = 1}^{r}  
  \sum_{\ell=1}^r  
   \lambda_\ell
     | u_{i j} |^2 | u_{\ell j}|^2, \\
                                     & =  - \frac12 \lambda_i + \frac{1}{2}  \sum_{j \neq i} (\lambda_i - \lambda_j)  \sum_{\ell=1}^r  
     | u_{i \ell} |^2 | u_{j \ell}|^2, \nonumber
\end{align}
where $u_{ab}$  are the entires of $U$.
 
To finish, first note that 
$
    (  {\mathsf{W}}_{ii}+\overline{\mathsf{W}}_{ii} , i=1,\dots, r) \eqd  (\sqrt{2}  b_i, i=1,\dots,r)
$
for independent standard real Brownian motions $b_i$.
 Next, one may check that for $i \neq j$ 
 \begin{align}\label{newnoise1}
 d {\mathsf{W}}_{ij}  d \overline{ {\mathsf{W}}}_{ij} 
  %=   d \tilde{\mathcal{B}}_{ki} \overline{d \tilde{\mathcal{B}}}_{ki} 
  = dt- \frac{1}{2} \sum_{\ell=1}^r 
     | u_{i \ell} |^2 | u_{j \ell}|^2 dt,      \qquad
    d {\mathsf{W}}_{ij} {d \mathsf{W}}_{ji}  =
    %=   \overline{d \tilde{\mathcal{B}}}_{ik} \overline{d \tilde{\mathcal{B}}}_{ki} = 
    \frac{1}{2} \sum_{\ell=1}^r 
     | u_{i \ell} |^2 | u_{j \ell}|^2 dt.
\end{align}
These rules  may then be used along with \eqref{TheN}  in \eqref{EigSystemA} to find that
the contribution of the final two lines 
there is equal to:
\begin{align}\nonumber
   \sum_{j \neq i} (\lambda_i - \lambda_j)^{-1}  &  \Bigl( \lambda_i^2 | d {\mathsf{W}}_{ji}|^2 + \lambda_j^2 | d {\mathsf{W}}_{ij}|^2
                                   + \lambda_i \lambda_j (  d {\mathsf{W}}_{ij} {d {\mathsf{W}}_{ji}} +   d\overline{ {\mathsf{W}}}_{ij} 
                                    d \overline{ {\mathsf{W}}}_{ji} ) \Bigr) \\
                                  =  &  \sum_{j \neq i}  \frac{ \lambda_i^2  + \lambda_j^2}{\lambda_i - \lambda_j} - \frac{1}{2} 
                                   \sum_{j \neq i} (\lambda_i - \lambda_j)  \sum_{\ell=1}^r 
     | u_{i \ell} |^2 | u_{j \ell}|^2.\label{interaction1}
\end{align} 
The last term here now cancels with the last term in \eqref{diagdrift}. This produces the system
$ d \lambda_i = \sqrt{2} \lambda_i db_i - \frac{1}{2} \lambda_i dt  + \sum_{j \neq i}  \frac{ \lambda_i^2  + \lambda_j^2}{\lambda_i - \lambda_j} dt$.
Now putting  back the additional drift terms ($\Tr P - P^2 + (\frac12 - 2 \mu) P$) will produce \eqref{CommonEigSDE} and
 complete the proof.
 
 Note also that in case of the isotropic noise the cross variations in \eqref{newnoise1} simplify as $dt$ and $0$ respectively, and thus in \eqref{interaction1} only the term $ \sum_{j \neq i}  \frac{ \lambda_i^2  + \lambda_j^2}{\lambda_i - \lambda_j} $ remains (as it should). 
\end{proof}

\subsection{Feynman-Kac proof}

We  prove Theorem \ref{thm:MatrixBessel}. 

Taking $M_t$ with the convergent choice of drift, as in \eqref{Magain} and again with $2\mu > r-1$,    the task is to show that: with $Y_t = M_t M_t^T$, 
\begin{equation}
\label{YExpectation}
   \ev \left[ e^{- \frac12 \int_0^{\infty} Y_t dt  } \,  | \, Y_0  = Y \right] = \frac{ K_{r} ( -\mu |   Y,I) }{2^{\mu-1} \Gamma_r(\mu)},
\end{equation}
via the partial differential equation that characterizes the left hand side. 
%Note we have used the relation $K_r(\mu | I,Y)  = K_r(-\mu | Y, I)$
%on the right hand side (compare the statement of Theorem \ref{thm:MatrixBessel}).

 This   relies on the Markov property of $ t \mapsto Y_t \in \mathcal{P}$. Applying It\^o's formula gives 
\begin{equation}
\label{YIto}
  d Y_t = M_t (dB_t + dB_t^T) M_t^T + (r +1  - 2 \mu) Y_t dt,
\end{equation}
 which does not appear to close. But a check of the matrix entry covariances  produces the generator,
\begin{equation*}
\label{GY}
   G_Y = \sum_{i \le j} \sum_{k \le \ell} ( y_{ik} y_{j \ell} +y_{i \ell} y_{jk})  \frac{\partial^2}{\partial y_{ij} \partial y_{k \ell}} + (r+1-2 \mu) \sum_{i\le j} y_{ij}  \frac{\partial}{\partial y_{ij}},
\end{equation*}
which can then be put into the abbreviated form \eqref{YGenerator}:
 $G_Y = 2 \Tr (Y \frac{\partial}{\partial Y} )^2-2\mu \Tr (Y  \frac{\partial}{\partial Y} ).$ 
 (One can also argue that by the rotation invariance of $dB_t$ each appearance of $M_t$ in \eqref{YIto} can be replaced by a positive square root $\sqrt{Y_t}$.)

The standard martingale argument used to derive the Feyman-Kac formula  shows that the left hand side $:= U(Y)$ of \eqref{YExpectation} solves
$ (G_Y - \frac12 \Tr Y) U(Y) = 0$. We have $U(0)=1$ since $Y_t =0$ for all time if $Y_0 = 0$.  
As for for uniqueness, we claim that if $\tilde U(Y)$ is bounded and satisfies 
$ (G_Y - \frac12 \Tr Y) \tilde U(Y) = 0$ and $\tilde U(0) =0$, then $\tilde U(Y)$ is identically zero. Consider the bounded martingale
$
  S_t^Y  = \tilde U(Y_t) e^{-\frac12 \int_0^t  \Tr(Y_s ) ds }.  
$
By Lemma \ref{normlemma} the matrix norm of $Y_t \rightarrow 0$ almost surely as $t\rightarrow \infty$. Hence $S_t^Y$ also converges to zero as $t \rightarrow \infty$. But then $S_t^Y \equiv 0$ as $S_t^Y$ is a regular martingale, and so $\tilde U(Y) \equiv 0$.

We are left to check that the $K$-Bessel function on the right hand side of \eqref{YExpectation} satisfies the PDE.
First note,
$$
   e^{\frac{1}{2} \Tr (xy)}  (2 G_y- \Tr(y)  ) e^{-\frac{1}{2} \Tr (xy)} 
     =  \Tr (xyxy) + (2 \mu-r - 1) \Tr (xy) - \Tr(y),
$$
where from here  on we revert to lower case for matrix variables.
Differentiating under the integral defining $K_{r}(- \mu |   y,I)$,  we are left to show that
\begin{equation}
\label{vanishing}
   0 = \int_{\mathcal{P}} ( \Tr (xyxy) + (2 \mu-r - 1) \Tr (xy) - \Tr(y)) 
   \det(x)^{-\mu} e^{-\frac{1}{2} \Tr  (xy + x^{-1} ) } d \mu_r(x).
\end{equation}
Here we have introduced the  invariant measure on $\mathcal{P}$:
\begin{equation}
\label{InvMeasure1}
d\mu_r(x)= (\det x)^{-\frac{r+1}{2}} dx,
\end{equation}
so called because
\begin{equation}
\label{InvMeasure2}
   z=t^T x t \leadsto  d\mu_r(z) =d\mu_r(x), \qquad z = x^{-1}\leadsto d\mu_r(z)=d\mu_r(x),
\end{equation}
with any invertible $t$ in the first relation,  see \cite{Terras}.  Unfortunately, integrating back up inside the integral  \eqref{vanishing} appears cumbersome. Instead we rely on the uniqueness of the associated Mellin  transform.

\begin{proposition} {\bf{(See Theorem 1 of \cite[\S 4.3.1]{Terras})}}
Introduce the power function,
\begin{equation}
\label{powerfunction}
    p_s(y) = \prod_{k=1}^r \det( y_k )^{s_k} \quad  s = (s_1, \dots , s_r) \in \CC^r,
\end{equation}
where $y_k \in \mathcal{P}_k$ (the $k \times k$ positive definite matrices) is the $k^{th}$ minor of $y \in \mathcal{P} =\mathcal{P}_r$.
Then the Mellin transform,
$$
  \hat{h}(s) := \int_\mathcal{P} p_s(y) h(y) \mu_r(dy),
$$
defines an invertible map on the subspace of rotation invariant functions $-$ those $h$ for which  $h(x) = h(O^T x O)$ for orthogonal $O$ $-$   of $L^2(\mathcal{P}, \mu_r)$.
\end{proposition}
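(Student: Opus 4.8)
The plan is to recognize $\hat h$, for rotation invariant $h$, as the spherical (Harish--Chandra) transform on the symmetric space $GL_r(\R)/O(r)\cong \mathcal P$, and then to quote the spherical Plancherel/inversion theorem for that space --- which is precisely the cited result of Terras. The first step is symmetrization: since $h$ is rotation invariant and $\mu_r$ is $O(r)$-invariant (recall \eqref{InvMeasure2}), substituting $y\mapsto O^T y O$ inside the defining integral and averaging over the orthogonal group gives
\[
  \hat h(s)\;=\;\int_{\mathcal P}\Bigl(\int_{O(r)}p_s(O^T y O)\,dO\Bigr)h(y)\,\mu_r(dy)\;=\;\int_{\mathcal P}\phi_s(y)\,h(y)\,\mu_r(dy),
\]
where $\phi_s(y):=\int_{O(r)}p_s(O^T y O)\,dO$ is, by Harish--Chandra's integral formula, the zonal spherical function of $GL_r(\R)/O(r)$ attached to $s$; in particular it depends only on the eigenvalues of $y$, and the normalizing constants produced by the averaging are values of Gindikin's Gamma function of the cone $\mathcal P$. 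So on the rotation invariant subspace $\hat h$ is exactly the spherical transform of $h$.

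Next I would invoke the spherical Plancherel theorem. Identifying rotation invariant functions on $\mathcal P$ with bi-$O(r)$-invariant functions on $GL_r(\R)$, the spherical transform --- read on the principal-series contour $\{\re s=\rho\}\subset\CC^r$ for the appropriate real $\rho$, which comes from the Iwasawa/Gram--Schmidt decomposition $y=t^{T}t$ realizing $p_s$ as a character of the diagonal torus in the $t_{jj}^2$ --- is a unitary isomorphism onto $L^2$ of that contour against the explicit Plancherel density $|c(s)|^{-2}\,ds$, with $c$ the Harish--Chandra $c$-function. Injectivity of this map is the uniqueness assertion $\hat h\equiv 0\Rightarrow h\equiv 0$, and surjectivity is the inversion formula, so the Proposition follows. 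Only injectivity is actually needed downstream: applied to the rotation invariant function of $y$ given by the $x$-integral in \eqref{vanishing}, it reduces the vanishing of that integral to a routine identity in the Mellin variable (a contiguity relation among the resulting $K$-Bessel/Gamma factors), which is exactly where ``integrating back up'' becomes tractable.

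The main obstacle is bookkeeping rather than concept: one must pin down the precise contour on which $\{p_s\}$ forms a complete orthogonal family --- equivalently the shift $s\mapsto s+\rho$ relating the power function \eqref{powerfunction} to a genuine unitary character of the diagonal torus --- and verify that $c(s)$, a product of ordinary Gamma factors coming from the rank-one reductions, has neither zeros nor poles on that contour, since that nonvanishing is what makes the transform invertible in the first place. All of this is carried out in \cite{Terras}; for the present paper it suffices to cite it and use the injectivity half.
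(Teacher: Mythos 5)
Your argument is essentially the paper's own: the Proposition is proved by citing Terras' Plancherel/inversion theorem for the Helgason--Fourier (spherical) transform on $\mathcal{P}$ and observing that, on rotation invariant functions, this transform reduces to the Mellin transform $\hat h(s)$ --- exactly the reduction you make via averaging $p_s(O^T y O)$ over $O(r)$. The extra detail you supply about the contour and the nonvanishing of the $c$-function is bookkeeping already contained in the cited theorem, so nothing further is needed.
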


The necessary isometry is actually stated in the cited theorem for the more general Helgason-Fourier transform on $\mathcal{P}$,
but this reduces to the above Mellin transform on rotation invariant functions. The right hand side of
\eqref{vanishing} is clearly rotation invariant in $y$ $-$ it is a function of the eigenvalues of $y$ alone.

Setting 
\begin{equation}
    V(y) =   \int_P  \det(x)^{-\mu} e^{-\frac{1}{2} \Tr  (xy +  x^{-1} ) } d \mu_r(x)= \int_P  \det(x)^{\mu} e^{-\frac{1}{2} \Tr  (x^{-1}y +  x ) } d \mu_r(x),
\end{equation}
that is, $V(y) =  2 K_r(-\mu |  y,I)=2 K_r(\mu | I, y)$, we compute first $\hat{V}(s)$ and then track the multipliers to this result produced after multiplying by, or  integrating against, the additional factors $\Tr (y)$, $\Tr (\cdot \, y)$, and   $\Tr( \cdot \, y \cdot y)$.

\medskip

{\bf Step 1: } This formula can be found in \cite{Terras}, but it guides the later computations so we record it here. The trick is to introduce the change of variables $x = t^T t$ for $t$ upper triangular with $t_{i,j}\in \R,$ $t_{ii}\in \R_+$. This results in the rules: 
\begin{equation}
\label{trick1}
   d\mu_r(x)=2^r \prod_{i=1}^r t_{ii}^{-i} \prod_{i\le j} dt_{ij}, \qquad p_s(x)=\prod_{j=1}^r t_{jj}^{r_j}, \mbox{ for } r_j=2(s_j+\dots+s_n).
\end{equation}
Using  two such changes of variables $y=t^T t$ and  $x=q^T q$ produces
\begin{align}
\label{Mellin1}
 2^{-2r} \hat{V}(s) &
 = \int\int p_s(t^T t) \det (q^Tq)^\mu e^{-\frac12  \Tr q^T q-\frac12 \Tr  q^{-T}  t^T t  q^{-1} }  \prod_j t_{jj}^{-j} q_{jj}^{-j} \,  dt \,  dq \nonumber
\\
%&=2^r\int\int p_s(q^T y q) \det (q^T q)^\mu e^{-\frac12  \Tr q^Tq-\frac12 \Tr  y}  \prod_j   t_{jj}^{-j} q_{jj}^{-j} \, dt \, dq
%\\
&=  \int\int p_s( t^T t) p_s(q^Tq) \det (q^T q)^\mu e^{-\frac12  \Tr q^Tq-\frac12 \Tr  t^T t}  \prod_j   t_{jj}^{-j}  q_{jj}^{-j} \, dt dq \nonumber
\\
& = \int \int \prod_{j=1}^r t_{jj}^{r_j-j }  q_{jj}^{r_j+2\mu - j}e^{-\frac12 \sum_{i\le j} (q_{ij}^2+t_{ij}^2)}  \, dt dq.  
\end{align}
In going from line one to line two, we replaced $y = t^Tt $ with $q^T y q$, used the first invariance in \eqref{InvMeasure2}, and then also the fact  
$ p_s(q^T y q)=p_s(y) p_s(q^Tq)$, see  Proposition 1 of \cite[\S 4.2.1]{Terras}. The final line can be computed explicitly  (each diagonal component
producing a gamma function, the Gaussian integral over each off diagonals producing a factor of $2\pi$), but for what we do here it is better to leave the answer in this form.

\medskip

{\bf{Step 2: }} Denote $V_1(y) = \Tr(y) V(y)$. By comparison to the last step we easily find that
\begin{equation}
\label{V1hat}
    2^{-2r} \hat{V}_1(s) = \int \int  \Tr(t^T t q q^T) \, \prod_{j=1}^r  t_{jj}^{r_j-j }  q_{jj}^{r_j+2\mu - j}e^{-\frac12 \sum_{i\le j} (q_{ij}^2+t_{ij}^2)} dq dt. 
\end{equation}
Now expanding out,
$
\Tr( t^T t q q^T ) =\sum_{ k \le i,j \le \ell } t_{k i} t_{k j} q_{i \ell} q_{j \ell},
$
we see that any term with $i < j$ will vanish by producing a factor of the form $\int u e^{-\tfrac12 u^2} du =0$. Hence, we can replace the trace
inside the integral \eqref{V1hat} with
$ \sum_{ k \le i \le \ell } t_{ki}^2  q_{i \ell}^2.$ Each integral in the resulting sum (over $k \le i \le \ell$) can be reduced to \eqref{Mellin1} after an integration by parts (or two),  yielding a different multiplicative factor of $\hat{V}$ for different pairings of indices ($k< i < \ell$ versus $k=i=\ell$, for example). Those multipliers are as follows.
$$
  \mbox{Multiplier: } \quad \begin{array}{l} 1, \\  r_i + 2 \mu - i +1,  \\ r - i +1,  \\ (r_i -i +1) ( r_i + 2 \mu  - i +1), \end{array} 
  \quad \mbox{ for } \quad  \begin{array}{l} k < i < \ell, \\ k < i = \ell, \\ k=i < \ell,   \\ k = i = \ell. \end{array}
$$
Summing up we find a total multiplier of 
\begin{equation}
\label{first_multiplier}
 c_1 =  \sum_{i=1}^r r_i^2 + (2 \mu + r+1) \sum_{i=1}^r r_i - 2 \sum_{i=1}^r i r_i,  
\end{equation}
that is, $\hat{V_1} = c_1 \hat{V}$.

\medskip

{\bf{Step 3}: } Now let $\hat{V_2}(s) =  \int p_s(y) \int (\det x)^{-\mu }\Tr(x y) e^{-\frac{1}{2}  \Tr (  y x  + x^{-1} )} d\mu(x) d\mu(y)$, for which we have that
\begin{equation}
\label{V2hat}
    2^{-2r} \hat{V}_2(s) = \int \int  \Tr(t^T t ) \, \prod_{j=1}^r  t_{jj}^{r_j-j }  q_{jj}^{r_j+2\mu - j}e^{-\frac12 \sum_{i\le j} (q_{ij}^2+t_{ij}^2)} dq dt. 
\end{equation}
With $\Tr  ( t^T t ) = \sum_{k \le i, k \le j} t_{ki} t_{k j}$, the considerations are even simpler than above. Comparing \eqref{V2hat} to \eqref{Mellin1}, there are just two different cases.
$$
  \mbox{Multiplier: } \quad \begin{array}{l} 1,  \\ r - i +1, \end{array} 
  \quad \mbox{ for } \quad  \begin{array}{l} i < j, \\ i = j. \end{array}
$$
Summing over all possible $i, j$ we  find that  $\hat{V_2} = c_2 \hat{V}$ with
\begin{equation}
\label{second_multiplier}
  c_2 =  \sum_{i=1}^r r_i. 
\end{equation}

\medskip

{\bf{Step 4:} } Finally set $\hat{V_3}(s) =  \int p_s(y) \int (\det x)^{-\mu }\Tr(x y x y ) e^{-\frac{1}{2}  \Tr (  y x  +  x^{-1} )} d\mu(x) d\mu(y)$, and write
\begin{equation}
\label{V3hat}
    2^{-2r} \hat{V}_3(s) = \int \int  \Tr(t^T t t^T t) \, \prod_{j=1}^r  t_{jj}^{r_j-j }  q_{jj}^{r_j+2\mu - j}e^{-\frac12 \sum_{i\le j} (q_{ij}^2+t_{ij}^2)} dq dt. 
\end{equation}
Now the expansion is
\[
\Tr(t^T t t^T t)=\sum_{ k,\ell \le i, j } t_{k i} t_{k j} t_{\ell i} t_{\ell j}, 
\]
and similar to the $ \hat{V_1}$ calculation, all terms corresponding to $ k \neq \ell < i \neq j$ terms will vanish.  The six remaining choices yield:
$$
  \mbox{Multiplier: } \quad \begin{array}{l} 2 \times 1 ,  \\ 2\times( r - i +1), \\ 3, \\  (r_i -i +3)(r_i-i +1), \end{array} 
  \quad \mbox{ for } \quad  \begin{array}{l} k =  \ell < i < j  \mbox{ or } k < \ell < i =j , \\  k = \ell = i < j \mbox{ or } k < \ell = i =j,  \\
       k = \ell < i = j, \\ k = \ell = i = j . \end{array}
$$
The additional factor of two in lines one and two count the ordering of $(i, j)$ or $(k,\ell)$.  A little algebra shows that then
$ \hat{V_3}  = c_3 \hat{V}$ with
\begin{equation}
\label{third_multiplier}
  c_3  =  \sum_{i=1}^r r_i^2 + 2 (r+1)\sum_{i=1}^r r_i  - 2 \sum_{i=1}^r i  r_i.
\end{equation}
The proof of Theorem \ref{thm:MatrixBessel} is finished by checking that
$$
    c_3 + (2 \mu - r-1) c_2 - c_1 = 0,
$$
recall \eqref{first_multiplier} and  \eqref{second_multiplier}.

\medskip

To finish the proof of Theorem  \ref{thm:MatrixBessel}, we now return to the:

\begin{proof}[Proof of Lemma \ref{normlemma}]
This calculation can basically be found in \cite{Norris}. It would be nice to have a way to control at least the matrix norm as sharply without going to eigenvalue coordinates. 

Staying in the setting just considered, we let $y_1 \le y_2  \le \cdots \le y_r$ be the eigenvalues of $Y_t$ and show that
$ \lim_{t \rightarrow \infty} \frac{1}{t} \log y_i(t) =  - 2\mu +  i -1$ with probability one for each $i = 1, \dots, r$.

With $\gamma_i = \log y_i$ we find from \eqref{YIto} and considerations similar to those behind Proposition \ref{Prop:commoneigs} that,
$$
   d \gamma_i = 2 d b_i - 2 \mu dt + \sum_{j \neq i} \frac{ e^{\gamma_i} + e^{\gamma_j} }{ e^{\gamma_i} - e^{\gamma_j}} dt.
$$
One checks that  $  \sum_{j \neq r} \frac{ e^{\gamma_r} + e^{\gamma_j} }{ e^{\gamma_r} - e^{\gamma_j}}  \ge r-1$ and
$  \sum_{j \neq 1} \frac{ e^{\gamma_1} + e^{\gamma_j} }{ e^{\gamma_1} - e^{\gamma_j}}  \le  1-r$. Moreover, if we change $i$ to $i+1$ then the interaction term will change by  at most  $2  \frac{ e^{\gamma_{i+1}} + e^{\gamma_i} }{ e^{\gamma_{i+1}} - e^{\gamma_i}}.$  Thus, $\gamma_{i+1} - \gamma_i$ is bounded above by the solution to
$$
  d  z_i = 2 d (b_{i+1} - b_{i}) + 2  \left(
  \frac{1 + e^{-z_i}}{1 - e^{-z_i}} \right) dt. 
$$
The proof is finished by remarking that $ \pr ( \lim_{t \rightarrow \infty} \frac{z_i}{t} = 2, \, i =1, \dots r-1 ) = 1$.  
\end{proof}

\section{Process level identities}
\label{sec:ProcessDufresne}

We prove Theorem \ref{thm:ProcessDufresne} and the  Burke property statements of  Theorems \ref{thm:OIOO} and \ref{thm:TITO}.

\subsection{$A_t^{(\mu)}$ and $A_t^{(-\mu)}$}

Here we again start by taking $M_t=M_t^{(-\mu)}$ defined by \eqref{Magain} for $t\ge 0$ with  driving matrix Brownian motion $t \mapsto B_t$. Throughout this section we drop the superscript on the corresponding additive functional, $A_t = A_t^{(-\mu)}$, and it is always assumed that $2 \mu > r-1$.

We need the following two facts.

\begin{proposition} 
\label{claim:enlarge}
Denote by $\mathcal{B}_t = \sigma( B_s , s \le t )$ and by $\hat{\mathcal{B}}_t$ the initial enlargement $\mathcal{B}_t \vee \sigma(A_{\infty})$.
Then,
\begin{align}
\label{BMhat}
\hat B_t&:=B_t-\int_0^t \left(2\mu I-M_s^T (A_\infty-A_s)^{-1} M_s\right) ds, 
%&=B_t-2\mu t I+\int_0^t  M_s^T (Q_\infty-Q_s)^{-1} M_s ds \nonumber
\end{align}
is a standard matrix Brownian motion with respect to $\hat{\mathcal{B}}_t$ and 
 is independent of $A_\infty$. 
%In particular if we condition on $Q_\infty=\Xi$ then 
%\[
%\tilde B_t=B_t-2\mu t I+\int_0^t  M_s^T (\Xi-Q_s)^{-1} M_s ds 
%\]
%is a standard matrix BM. 
\end{proposition}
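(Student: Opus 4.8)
\textbf{Proof proposal for Proposition \ref{claim:enlarge}.}

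The plan is to follow the classical enlargement-of-filtration recipe (as in \cite{MY4}): identify the semimartingale decomposition of $B_t$ in the enlarged filtration $\hat{\mathcal{B}}_t$ via a Jacod-type absolute continuity argument, compute the information drift explicitly, and then verify that the corrected process is a Brownian motion independent of the conditioning variable $A_\infty$. The key structural input is the multiplicative increment property \eqref{multincr}: for fixed $s$, writing $A_\infty = A_s + M_s\bigl(\int_0^\infty M_{s,s+u}M_{s,s+u}^T du\bigr) M_s^T$, and noting that $\int_0^\infty M_{s,s+u}M_{s,s+u}^T du \eqd A_\infty^{(-\mu)}$ is independent of $\mathcal{B}_s$ with the inverse Wishart law $\gamma_{2\mu}^{-1}$ supplied by Theorem \ref{thm:MatrixDufresne}. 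Hence $A_\infty$ conditioned on $\mathcal{B}_s$ has a known density: $M_s^{-1}(A_\infty - A_s)M_s^{-T}$ is $\gamma_{2\mu}^{-1}$-distributed and independent of $\mathcal{B}_s$, so the conditional density of $A_\infty$ given $\mathcal{B}_s$ is $p_s(a) = \bigl(\det(M_s M_s^T)\bigr)^{-1} f_{\gamma_{2\mu}^{-1}}\!\bigl(M_s^{-1}(a-A_s)M_s^{-T}\bigr)$ on the appropriate cone, where $f_{\gamma_{2\mu}^{-1}}$ has density proportional to $(\det x)^{-\mu-\frac{r+1}{2}} e^{-\frac12 \Tr x^{-1}}$.

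The main computational step is then to find the information drift: by Jacod's theorem the $\hat{\mathcal{B}}_t$-drift of $B_t$ is $\int_0^t \frac{d\langle B, p_\cdot(a)\rangle_s}{p_s(a)}\big|_{a = A_\infty}\, ds$, so I would compute $d_s\, p_s(a)$ by It\^o's formula, using \eqref{Magain} for $dM_s$, $dM_s^{-1} = -dB_s M_s^{-1} + (\tfrac12-\mu) M_s^{-1} ds$ (derived as in the diffusion proof), and $dA_s = M_s M_s^T ds$ (finite variation, so it contributes no martingale part). The martingale part of $d_s \log p_s(a)$ is what matters; differentiating $\log p_s(a) = -\log\det(M_s M_s^T) + \log f_{\gamma_{2\mu}^{-1}}(M_s^{-1}(a-A_s)M_s^{-T})$ and collecting the $dB_s$ terms: the first term contributes $-2\,\Tr dB_s$ (from $d\log\det M_s M_s^T = 2\Tr dB_s + \dots$); the $e^{-\frac12 \Tr x^{-1}}$ factor, with $x = M_s^{-1}(a-A_s)M_s^{-T}$, contributes via $d x = -dB_s x - x\,dB_s^T + (\text{f.v.})$ a martingale term that after the $\Tr x^{-1}$ differentiation works out to $\Tr\bigl(x^{-1}(dB_s x + x\, dB_s^T)\bigr) = 2\Tr(dB_s) $ ... here one must be careful, but the net effect of combining the $(\det x)$-power term and the determinant normalizations should reduce, after setting $a = A_\infty$ so that $x = M_s^{-1}(A_\infty - A_s)M_s^{-T}$ and hence $x^{-1} = M_s^T(A_\infty-A_s)^{-1}M_s$, to exactly $\Tr\bigl((M_s^T(A_\infty-A_s)^{-1}M_s - 2\mu I)\, dB_s\bigr)$ per unit of quadratic variation. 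That matches the asserted drift $-\int_0^t (2\mu I - M_s^T(A_\infty-A_s)^{-1}M_s)\,ds$ in \eqref{BMhat}, so $\hat B_t$ is a continuous $\hat{\mathcal{B}}_t$-local martingale with $\langle \hat B_{ij}, \hat B_{k\ell}\rangle_t = \delta_{ik}\delta_{j\ell} t$ (unchanged, since the drift correction is absolutely continuous), hence a standard matrix Brownian motion by L\'evy's characterization.

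Finally, independence of $\hat B_t$ from $A_\infty$: since $A_\infty$ is $\hat{\mathcal{B}}_0$-measurable and $\hat B$ is a Brownian motion in the filtration $\hat{\mathcal{B}}_t$ which already contains $\sigma(A_\infty)$ at time $0$, $\hat B$ is automatically independent of $\hat{\mathcal{B}}_0 \supseteq \sigma(A_\infty)$ by the independence of Brownian increments from the initial $\sigma$-field. I expect the main obstacle to be the It\^o computation of the martingale part of $d_s\log p_s(a)$ — keeping track of the matrix non-commutativity in differentiating $\log\det$, the power $(\det x)^{-\mu - \frac{r+1}{2}}$, and $\Tr x^{-1}$ simultaneously, and checking that the cross-variation and It\^o-correction (second-order) terms do not spoil the cancellation that produces exactly the stated drift. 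A secondary technical point is justifying Jacod's criterion, i.e.\ that the conditional law of $A_\infty$ given $\mathcal{B}_t$ is a.s.\ equivalent to its unconditional law uniformly enough to apply the enlargement theorem; this follows from the explicit Gaussian/Wishart densities above and the a.s.\ invertibility of $M_t$ and of $A_\infty - A_t$ for all $t$ (the latter from $M_{t,t+u}$ being an independent copy started at $I$, together with Theorem \ref{thm:MatrixDufresne}).
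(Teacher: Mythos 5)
Your overall route is the same as the paper's: use the multiplicative increment property \eqref{multincr} together with Theorem \ref{thm:MatrixDufresne} to write the conditional law of $A_\infty$ given $\cB_t$ explicitly (so that $M_t^{-1}(A_\infty-A_t)M_t^{-T}$ is $\gamma_{2\mu}^{-1}$-distributed and independent of $\cB_t$, as in \eqref{chanvar}), read the information drift off the logarithmic derivative of the resulting density martingale, and finish with L\'evy's characterization; the independence of $\hat B$ from $A_\infty$ is, as you say, automatic since $\sigma(A_\infty)\subset\hat\cB_0$. The only structural difference is that the paper does not appeal to Jacod's theorem: it forms the Radon--Nikodym martingale $R_{t,A}$ of \eqref{Radon}, computes $dR_{t,A}$ as in \eqref{Requation}, and then verifies the semimartingale decomposition of $B$ in $\hat\cB_t$ by hand through a cross-variation computation on $\lambda_t(h)=\ev[h(A_\infty)\,|\,\cB_t]$ and the monotone class theorem, which avoids having to check the hypotheses of the general enlargement theorem.

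That said, the computational core of your sketch, as written, would not produce the stated drift, and you need to redo it (you flag this yourself, but the fix is essential). Two concrete slips: (1) the Jacobian of $A\mapsto MAM^T$ on symmetric matrices is $(\det M)^{r+1}$, not $\det(MM^T)=(\det M)^2$; with your normalization the scalar part of the information drift comes out as $(2\mu+r-1)I$ instead of $2\mu I$, while the correct bookkeeping yields the factor $\det(M_t)^{2\mu}$ appearing in \eqref{Radon}. (2) In differentiating the exponential factor you wrote the martingale contribution as $\Tr\bigl(x^{-1}(dB_s\,x + x\,dB_s^T)\bigr)=2\Tr dB_s$, i.e.\ you dropped one factor of $x^{-1}$ coming from $d(x^{-1})=-x^{-1}\,dx\,x^{-1}+\cdots$; as written, the exponential term contributes only a multiple of $\Tr dB_s$, so the entire drift would be a multiple of the identity and could never contain the matrix term $M_s^T(A_\infty-A_s)^{-1}M_s$. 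The correct martingale part of $-\tfrac12\Tr x_s^{-1}$, with $x_s=M_s^{-1}(a-A_s)M_s^{-T}$, is $-\Tr\bigl(M_s^T(a-A_s)^{-1}M_s\,dB_s\bigr)$, which combined with the $2\mu\,\Tr dB_s$ from $\det(M_s)^{2\mu}$ reproduces exactly \eqref{Requation} and hence the drift in \eqref{BMhat}. Once these two corrections are made, your argument goes through and coincides with the paper's, up to the Jacod-versus-direct-verification point noted above.
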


\begin{proposition} 
\label{claim:Inverses}
Almost surely, 
\begin{equation}
\label{eq:Inverses}
   (A_t^{-1}-A_{\infty}^{-1})^{-1}=\int_0^t N_s^{} N_s^T ds, \quad  \mbox{ for } N_t := A_\infty(A_\infty-A_t)^{-1} M_t. 
\end{equation}
Furthermore, conditioned on the value of $A_{\infty}$, the process $N_t$ satisfies
\begin{equation}
\label{Heq}
dN_t=N_t d\hat B_t +  (\frac12 + \mu) N_t  dt , \qquad N_0=I,
\end{equation}
where $\hat B_t$ is as defined in \eqref{BMhat}.
\end{proposition}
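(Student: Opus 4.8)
The plan is to handle the two assertions in turn. The identity \eqref{eq:Inverses} is a pathwise fact that reduces to ordinary calculus (no It\^o correction) together with a short piece of matrix algebra, and the SDE \eqref{Heq} is then an It\^o computation which, after inserting the definition \eqref{BMhat} of $\hat B_t$, collapses to the claimed linear equation; the substantive input for the second part is Proposition \ref{claim:enlarge}.

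For the first part I would set $C_t := A_\infty - A_t = \int_t^\infty M_s^{} M_s^T\,ds$. This is continuous, symmetric and positive definite for every finite $t$ (being the integral of the continuous positive definite integrand $M_s^{} M_s^T$ over $[t,\infty)$), and it has finite variation with $dC_t = -M_t^{} M_t^T\,dt$, so $d(C_t^{-1}) = -C_t^{-1}(dC_t)C_t^{-1} = C_t^{-1} M_t^{} M_t^T C_t^{-1}\,dt$ with no quadratic-variation term. Since $N_t = A_\infty C_t^{-1} M_t$ and $A_\infty$, $C_t$ are symmetric, one has $N_s^{} N_s^T = A_\infty\,C_s^{-1} M_s^{} M_s^T C_s^{-1}\,A_\infty$, and hence
\[
  \int_0^t N_s^{} N_s^T\,ds = A_\infty\Big(\int_0^t d(C_s^{-1})\Big)A_\infty = A_\infty\big((A_\infty - A_t)^{-1} - A_\infty^{-1}\big)A_\infty = A_\infty(A_\infty - A_t)^{-1} A_t,
\]
the last equality using $A_\infty - (A_\infty - A_t) = A_t$. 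One then checks the purely algebraic identity $(A_t^{-1} - A_\infty^{-1})^{-1} = A_\infty(A_\infty - A_t)^{-1}A_t$ by factoring $A_t^{-1} - A_\infty^{-1} = A_t^{-1}(A_\infty - A_t)A_\infty^{-1}$; this is valid for $t>0$, where $A_t \in \mathcal{P}$, and the $t\downarrow 0$ case follows by continuity since both sides vanish.

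For the SDE, apply the It\^o product rule to $N_t = A_\infty\,C_t^{-1}\,M_t$. Only $M_t$ carries a martingale part and $C_t^{-1}$ is of finite variation, so there is no cross term, and using $dM_t = M_t dB_t + (\tfrac12-\mu)M_t dt$ together with $A_\infty C_t^{-1} M_t = N_t$,
\[
  dN_t = A_\infty(dC_t^{-1})M_t + A_\infty C_t^{-1} dM_t = N_t\big(M_t^T C_t^{-1} M_t\big)dt + N_t\,dB_t + (\tfrac12-\mu)N_t\,dt.
\]
From \eqref{BMhat} one has $dB_t = d\hat B_t + 2\mu I\,dt - M_t^T(A_\infty - A_t)^{-1}M_t\,dt$, and substituting cancels the $M_t^T C_t^{-1} M_t$ drift term and leaves $dN_t = N_t\,d\hat B_t + (\tfrac12+\mu)N_t\,dt$, with $N_0 = A_\infty A_\infty^{-1} I = I$. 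It remains to invoke Proposition \ref{claim:enlarge}: there $\hat B_t$ is shown to be a standard matrix Brownian motion for $\hat{\mathcal{B}}_t$ and independent of $A_\infty$, so that, conditionally on the value of $A_\infty$, $\hat B_t$ is still a standard matrix Brownian motion, and $N_t$ then solves exactly the linear SDE \eqref{Heq}, which of course no longer references $A_\infty$.

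I do not expect a serious obstacle; this proposition is essentially a corollary of Proposition \ref{claim:enlarge}. The points that need care rather than ingenuity are: (i) confirming $A_\infty - A_t \in \mathcal{P}$ for every finite $t$, so that $C_t^{-1}$ and $N_t$ are defined along the whole path; (ii) keeping the transposes straight, which is exactly what makes $N_s^{} N_s^T$ symmetrize into the telescoping form above; and (iii) phrasing the last step correctly, i.e.\ passing from ``$N_t$ solves this equation driven by $\hat B_t$'' to the stated conditional-on-$A_\infty$ assertion using the independence supplied by Proposition \ref{claim:enlarge}.
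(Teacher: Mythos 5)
Your proposal is correct and follows essentially the same route as the paper: a pathwise (finite-variation) integration identifying $\int_0^t N_s N_s^T\,ds$ with the telescoped differential of $(A_s^{-1}-A_\infty^{-1})^{-1}$, followed by the It\^o product rule for $N_t$ and substitution of \eqref{BMhat} to cancel the extra drift, with Proposition \ref{claim:enlarge} supplying the conditional Brownian motion. The only cosmetic difference is that you integrate $d(C_t^{-1})$ for $C_t=A_\infty-A_t$ and do the matrix algebra at the end, whereas the paper differentiates $(A_s^{-1}-A_\infty^{-1})^{-1}$ directly via resolvent identities; the content is identical.
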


Granted these propositions  the result is immediate:

\begin{proof}[Proof of Theorem \ref{thm:ProcessDufresne}]
By \eqref{Heq}, $ \int_0^t N_s^{} N_s^T ds$ has the same distribution as $A_t^{(\mu)}$ as a process. 
Thus  conditioned on $A_{\infty}$,  we have that
$ \{ A_t^{-1}-A_{\infty}^{-1}, \, t \ge 0 \}  $  is equal in law to  $\{  (A_t^{(\mu)})^{-1}, \,  t \ge 0 \}$. But then this is also true unconditionally, which is precisely the desired  statement. Afterward, one can read \eqref{eq:Inverses} as an almost sure version of the identity \eqref{eq:ProcessDufresne}.
 \end{proof}

 \begin{proof}[Proof of Proposition \ref{claim:enlarge}]
 We start with the representation
\[
A_\infty = A_t + M_t   \left( \int_0^{\infty} M_t^{-1} M_{s+t}^{}  M_{s+t}^T  M_t^{-T} ds  \right)  M_t^{T},
\]
noting that by the matrix Dufresne identity (Theorem \ref{thm:MatrixDufresne}) the integral on the right hand side has the
 $\gamma^{-1}_{ 2\mu}$ distribution.
Then, with $f$ denoting the corresponding density function,
\begin{align}
\label{chanvar}
\pr ( A_{\infty} \in dA \,| \, \cB_t) = f  \left( M_t^{-1} (A - A_t^{} ) M_t^{-T}   \right) \left(\det M_t \right)^{-{(r+1)}} dA.
\end{align}
Here we have used that the Jacobian of the map $A \mapsto M A M^T$ on symmetric matrices is given by $(\det M)^{r+1}$, see for example Lemma 2.2 of \cite{Letac}.

Next, by the tower property of conditional expectations the Radon-Nikodym derivative
\begin{align}
\label{Radon}
R_{t, A}&=\frac{dP ( A_{\infty} \in dA  \, | \, \cB_t)}{ dP ( A_{\infty} \in dA)} \\
&= c_A \det(A-A_t)^{- \mu  - \frac{r+1}{2}} \det (M_t)^{2\mu} \exp[-\frac12 \Tr( (A- A_t^{})^{-1} M_t^{} M_t^T) ] \nonumber
\end{align}
is a $\cB_t$-martingale. The formula in line two follows from writing out the appearance  of $f( M_t^{-1} (A - A_t^{} ) M_t^{-T}  )$ in \eqref{chanvar}. The factor $c_A$ is  then the  density $f(A)$ without the normalizing constant. As noted in \eqref{detM00}, $t \mapsto \det M_t^{(-\mu)}$ is the geometric Brownian motion  $\det M_t = e^{ \Tr B_t -\mu r t}$. Substituting this in the preceding formula we record
\begin{align}
\label{Requation}
d R_{t,A} = R_{t,A} \left( 2\mu \Tr dB_t^{}- \Tr [ dB_t M_t^T (A-A_t)^{-1} M_t^{}]  \right)
\end{align}
for later use.

%From here everything works as in the one dimensional case.
Next we will show that for any  bounded continuous test function $h$ and 
event $\Lambda_s\in \cB_s$, $s<t$:
\begin{align}\label{condexp}
\ev \left[ \ind_{\Lambda_s} h(A_\infty) (B_t-B_s)\right]=\ev \left[\ind_{\Lambda_s}   h(A_\infty) \int_s^t  
\left(2\mu-M_u^T(A_\infty-A_u)^{-1}A_u\right) du\right].
\end{align}
Granted \eqref{condexp} the monotone class theorem will imply that
\[
\ev \left[ B_t-B_s\big|  \hat\cB_s \right] =\int_s^t  \left(2\mu-M_u^T(A_\infty-A_u)^{-1}M_u\right) du,
\]
or in other words that  $\hat B_t$ defined in \eqref{BMhat} is a local martingale (with respect to the filtration $\hat\cB_t$). 
To see that it is actually a matrix Brownian motion, and so complete the proof, one checks the quadratic covariation of 
the entries and invokes  L\'evy's theorem.  

Returning now to (\ref{condexp}) we introduce $\lambda_t(h):= \ev [ h(A_\infty) | \cB_t]$ and write the left hand 
side of that equality as in 
\[
\ev \left[ \ind_{\Lambda_s} h(A_\infty ) (B_t-B_s)\right]= \ev \left[ \ind_{\Lambda_s} (\lambda_t(h)B_t-\lambda_s(h) B_s)\right],
\]
as follows by conditioning separately with respect to both $\cB_t$ and then $\cB_s$. 
By \eqref{Radon} we have that,
\begin{align*}
\lambda_t(h) &=\int_\mathcal{P} h(A) f (M_t^{-1} (A-A_t) [M_t]^T) (\det M_t)^{-2} dA\\
&=\int_\mathcal{P} h(A) R_{t, A} f(A) dA,
\end{align*}
where again $f$ is the $\gamma_{r, 2\mu}^{-1}$ density function. And then \eqref{Requation} implies that,
\[
\lambda_t(h)-\lambda_s(h) =\int_\mathcal{P} \int_s^t h(A) R_{u,A} \left( 2 \mu \Tr dB_u- \Tr [ dB_u M_u^T (A-A_u)^{-1} M_u]  \right) f(A) dA.
\]
To continue we compute the cross variation of $\lambda_t(h)$ and $[B_t]_{i,j}$,  for which
 we just need to check the coefficient of $dB_{i,j}$ in the previous integral with the result that
\[
\langle \lambda_t(h) , [B_t]_{ij}\rangle = 
\int_\mathcal{P} h(A) R_{A,u} \left( 2\mu \ind_{i=j}- \left[M_u^T (A-A_u)^{-1} M_u\right]_{i,j}  \right) f(A) dA.
\] 
Then writing $B_t - B_s = \int_s^T d B_u$  leads to 
\begin{align*}
\ev \left[  (\lambda_t( h)B_t-\lambda_s(h) B_s)\big| \cB_s \right]
&=\int_\mathcal{P} \int_s^t h(A) R_{A,u} \left(2\mu I -M_u^T(A-A_u)^{-1}M_u\right) du \, f(A) dA \\
&= \int_s^t \ev \left[ h(A_{\infty})  \left(2\mu I -M_u^T(A_{\infty}-A_u)^{-1}M_u\right) \big| \cB_u\right]du .
\end{align*}
From here we see that $\ev \left[ \ind_{\Lambda_s} (\lambda_t(h)B_t-\lambda_s(h) B_s)\right]= \ev \left[\ind_{\Lambda_s} \ev\left[  (\lambda_t(h)B_t-\lambda_s(h) B_s)\big| \cB_s\right]\right]$ is equal to the right hand side of (\ref{condexp}), as required.
 \end{proof}

 \begin{proof}[Proof of Proposition \ref{claim:Inverses}]
 Since $t \mapsto A_t^{-1}$ is almost surely once differentiable and $|| A_t - t I || = o(t) $ as $t \rightarrow 0$ (as $M_0=I$ and $M_t$ is continuous), we have that
\begin{equation}
\label{eq:Inverses1}
(A_t^{-1}-A_{\infty}^{-1})^{-1}=\int_0^t d\left[(A_s^{-1}-A_{\infty})^{-1}\right],
\end{equation}
also almost surely.
On the other hand,
\begin{align*}
d\left[(A_t^{-1}-A_{\infty}^{-1})^{-1}\right]&=dA_t (A_\infty-A_t)^{-1} A_\infty+A_t(A_\infty-A_t)^{-1}dA_t(A_\infty-A_t)^{-1}A_\infty\\
&=M_tM_t^T  (A_\infty-A_t)^{-1} A_\infty dt+A_t(A_\infty-A_t)^{-1}M_tM_t^T(A_\infty-A_t)^{-1}A_\infty dt\\
&=A_\infty (A_\infty-A_t)^{-1}M_t M_t^T(A_\infty-A_t)^{-1}A_\infty dt.
%=H_tH_t^T dt
\end{align*}
Here we have used the matrix identity $(C^{-1}-D^{-1})^{-1}=C(D-C)^{-1} D$ in line one, and 
 $I+C(D-C)^{-1}= D(D-C)^{-1}$ to go from line two to line three. 
 Substituting back into \eqref{eq:Inverses1} yields \eqref{eq:Inverses}, identifying $N_t =  A_\infty (A_\infty-A_t)^{-1}M_t $ at the same time.
 
 To see \eqref{Heq} note that $N_0 = A_{\infty}^{} A_{\infty}^{-1} M_0 = I$ and compute,
 \begin{align*}
d  &  [ A_\infty (A_\infty-A_t)^{-1}    M_t ] \\
& =  \,  A_\infty(A_\infty-A_t)^{-1} M_t M_t^T(A_\infty-A_t)^{-1}M_t dt + A_\infty (A_\infty-A_t)^{-1}M_t(dB_t+(\frac12-\mu) I dt)\\
& =  \,  N_t\left[ M_t^T(A_\infty-A_t)^{-1}M_tdt +d B_t +(\frac12-\mu)I dt  \right].  
\end{align*}
By Proposition \ref{claim:enlarge}, the quantity inside the final bracket equals $ d\hat{B}_t + (\frac12 + \mu) I dt $, as desired. \end{proof}

\subsection{Burke properties}

Both results are consequences of Propositions \ref{claim:enlarge} and \ref{claim:Inverses}. Let $N_t$ be defined as in the latter statement and introduce the temporary shorthand $\hat{A}_t = \int_{0}^t N_s N_s^T ds$, reserving  
$ A_t $ for $\int_0^t M_s M_s^T ds $. Importantly,  $M_t$ is also chosen as throughout the proofs of those same propositions. In particular
the underlying matrix Brownian motions $\hat{B}_t $ and $B_t$ stand in the same relationship.

\begin{proof}[Proof of Theorem \ref{thm:OIOO}]
 We start by rewriting the  almost sure identity \eqref{eq:Inverses} as in
  $$
        \hat{A}_t^{-1}  =  A_t^{-1} - A_{\infty}^{-1}. 
  $$
  Repeated use of this along with the resolvent identity then  produces
 \begin{align}
 \label{algebraparty}
A_\infty (A_\infty-A_t)^{-1} & = (I-(\hat A_t^{-1}+A_\infty^{-1})^{-1} A_\infty^{-1})^{-1}\\
&= (I-( I + A_\infty \hat A_t^{-1} )^{-1} )^{-1} \nonumber \\
&= \hat A_t A_\infty^{-1}(I + A_\infty \hat A_t^{-1} ) \nonumber  \\
&=(A_\infty+ \hat A_t ) A_\infty^{-1}. \nonumber
\end{align}
The important observations are:  the left hand side of \eqref{algebraparty} times $M_t$ (from the right) is the definition of $N_t$, and, on the right hand side, we have that $A_{\infty}$ is independent of $\hat{A_t}$ (on account of being independent of $\hat B_t$ and so $N_t$). 

To exploit the second point, we extend $N_t$ to $t \in (-\infty, 0)$ as in Section  \ref{sec:DufresneBasic}. 
%Simply extend the $\hat B_t$ to the negative half-line by a new (one sided) matrix Brownian motion, independent of both $B_t $ and $\hat B_t$ for $t \ge 0$, and let the new  process be defined by the solution of \eqref{Heq} over the whole line. 
By the same reasoning used in the proof of Theorem \ref{thm:MatrixDufresne}, 
 $\int_{-\infty}^0 N_s^{} N_s^T ds $ is an independent copy of $A_{\infty}$. %(as we learned ).

Now, modifying the introduced notation to let $\hat A_{(-\infty, t)} $ denote $\int_{-\infty}^t N_s^{} N_s^T ds$, we conclude from the above comments and \eqref{algebraparty} that
$$
    \hat A_{(-\infty, 0)} (\hat A_{(-\infty, t)})^{-1}  N_t   \eqd  M_t,
$$
as processes for $t\ge 0$. In order to recognize this as equivalent to the identity \eqref{eq:MatrixOIOO} announced in Theorem \ref{thm:OIOO}, take inverse-transposes throughout the above to find that,
\begin{equation}
\label{eq:MatrixOIOO1}
      (\hat A_{(-\infty, 0)})^{-1}  N_t ( N_t^{-1}  \hat A_{(-\infty, t)}  N_t^{-T} )   \eqd  M_t^{-T}.
\end{equation}
An application of It\^o's formula will then show that $M_t^{-1}$ is a copy of $N_t$ . In particular, it satisfies $d M_t^{-1} = M_t^{-1}  d (- B_t^T) 
+ (\frac12+\mu) M_t^{-1} dt$, for $t \ge 0$.

Finally, the independence statement follows from the same trick used in  \cite{OconYor}  for the one-dimensional case (see the proof of Theorem 6 there). Bringing in yet more notation, let $L_t$ denote the left hand side of \eqref{eq:MatrixOIOO1}. We will show that
\begin{equation}
\label{AnotherTimeReversalTrick}
    N_t^{-1}  \hat A_{(-\infty, t)}  N_t^{-T} = L_t^T  ( \int_t^{\infty} L_s^{-T} L_s^{-1} ds )  L_t,
\end{equation}
with probability one. The independence of $\{ L_s, s \le t \}$ and $\{  N_s^{-1}  \hat A_{(-\infty, s)}  N_s^{-T}, s > t\}$   being made clear by writing the right hand side of \eqref{AnotherTimeReversalTrick} as $\int_t^{\infty} (L_t^{-1} L_s)^{-T}  (L_t^{-1} L_s)^{-1} ds$. On account of
\eqref{eq:MatrixOIOO1} the process $L_t$ inherits the independence of multiplicative increments from $M_t$.
To verify \eqref{AnotherTimeReversalTrick} notice that: 
\begin{align*}
L_t^T  ( \int_t^{\infty} L_s^{-T} L_s^{-1} ds )  L_t^{}  & = N_t^{-1}  \hat A_{(-\infty, t)}  \left(  \int_t^{\infty} \hat A_{(-\infty, s)}^{-1} N_s N_s^T \hat A_{(-\infty, s)}^{-1}  ds  \right) 
    \hat A_{(-\infty, t)}  N_t^{-T} \\
                              & = N_t^{-1} \hat A_{(-\infty, t)}   \left( - \int_t^{\infty}    d ( \hat A_{(-\infty, s)}^{-1} )   \right)  \hat A_{(-\infty, t)}  N_t^{-T}.
\end{align*}
The proof is finished upon integrating and using  the fact that $  || \hat A_{(-\infty, t)}^{-1}   || \rightarrow  0 $ as $t \rightarrow \infty$ with probability one (which follows by the computation behind  Lemma \ref{normlemma}).
\end{proof}

%\benedek{TITO}

As for  Theorem  \ref{thm:TITO}, the identity (\ref{algebraparty}) together with the definition of $N_t$ from (\ref{eq:Inverses}) gives that
\[
M_s^T(A_\infty-A_s)^{-1} M_s=N_s^T(A_\infty+\hat A_s)^{-1}N_s=N_s^T  \hat A_{(-\infty, s)}^{-1} N_s,
\]
where we continue using the notation introduced in the previous proof, and  hence Proposition \ref{claim:enlarge} can be rewritten as in:
\[
B_t=\hat B_t+\int_0^t (2\mu I-N_s^T  \hat A_{(-\infty, s)}^{-1} N_s) ds. 
\]
Since the right hand side only depends on $\hat B_t$, this identity provides a nonlinear transformation producing one matrix  Brownian motion from another. Reversing the roles of the Brownian motions and reverting to our original notation yields:

\begin{corollary}\label{anotherOIOO}
%Let $B_t$ be  a two-sided matrix  Brownian motion, and 
Let $2\mu>r-1$ and  now take $M_t=M_t^{(\mu)}$,  
 extended to $t\in \R$ as described in Section \ref{sec:DufresneBasic}.
 Denote the (two-sided) driving matrix Brownian motion for $M_t$ by $B_t$.
  Then 
\[
\hat B_t=B_t+2\mu I t-\int_0^t M_s^T (\int_{-\infty}^s M_u^{} M_u^T du)^{-1} M_s ds\eqd B_t 
\]
as processes for $t\ge 0$. 
\end{corollary}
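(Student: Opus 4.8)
The plan is to read the identity displayed just above the corollary,
\begin{equation}
\label{eq:planreversed}
B_t=\hat B_t+\int_0^t \big(2\mu I-N_s^T \hat A_{(-\infty, s)}^{-1} N_s\big)\, ds, \qquad t\ge 0,
\end{equation}
``in the other direction''. Recall the ingredients on its right-hand side: $\hat B$ is a two-sided matrix Brownian motion (the one produced by Proposition \ref{claim:enlarge}, then extended to $t<0$ as in Section \ref{sec:DufresneBasic}); $N=N^{(\mu)}$ is the drift-$+\mu$ Brownian motion on $GL_r$ driven by $\hat B$, likewise extended to the whole line; and $\hat A_{(-\infty,s)}=\int_{-\infty}^s N_u N_u^T\,du$, which — using Theorem \ref{thm:MatrixDufresne} for the piece over $(-\infty,0)$ together with Lemma \ref{normlemma} — is a.s.\ finite and in $\mathcal{P}$ for every $s$ whenever $2\mu>r-1$. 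Identity \eqref{eq:planreversed} is exactly Proposition \ref{claim:enlarge} rewritten by means of the algebra in \eqref{algebraparty} and the definition \eqref{eq:Inverses} of $N_s$, which together give $M_s^T(A_\infty-A_s)^{-1}M_s=N_s^T\hat A_{(-\infty,s)}^{-1}N_s$; and crucially the left-hand side $B$ of \eqref{eq:planreversed} is a genuine matrix Brownian motion.

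The key point is that the right-hand side of \eqref{eq:planreversed} is a deterministic measurable functional of the two-sided path $(\hat B_t)_{t\in\R}$ alone: $N$ is the unique strong solution of a linear SDE driven by $\hat B$, hence so is $\hat A_{(-\infty,\cdot)}$, and the drift integral is then determined. Thus Proposition \ref{claim:enlarge} says precisely that the map
\[
\Phi:\ (\hat B_t)_{t\in\R}\ \longmapsto\ \Big(\hat B_t+\int_0^t \big(2\mu I-N_s^T \hat A_{(-\infty, s)}^{-1} N_s\big)\, ds\Big)_{t\ge 0}
\]
carries a matrix Brownian motion to a matrix Brownian motion; since $\Phi$ depends only on the law of its input, the same holds for any two-sided matrix Brownian motion fed into it. It remains only to relabel: writing $B$ for the input Brownian motion, $M=M^{(\mu)}$ for the associated drift-$+\mu$ process on the line (so $M$ plays the role of $N$), and $\int_{-\infty}^s M_u M_u^T\,du$ for $\hat A_{(-\infty,s)}$, the output $\Phi(B)$ is $B_t+\int_0^t\big(2\mu I-M_s^T(\int_{-\infty}^s M_u M_u^T\,du)^{-1}M_s\big)\,ds$, which, since $2\mu I t=\int_0^t 2\mu I\,ds$, coincides with the process $\hat B_t$ of the corollary; hence that process is a matrix Brownian motion for $t\ge 0$.

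The only delicate step — and the main obstacle — is this ``reversal of roles'': one must check that nothing beyond the path of $\hat B$ enters the construction of $N$ and of $\hat A_{(-\infty,\cdot)}$ in Proposition \ref{claim:enlarge}, so that substituting an arbitrary Brownian motion for $\hat B$ is legitimate. This is exactly what the whole-line extension of Section \ref{sec:DufresneBasic} provides: it builds $N$ on $(-\infty,0)$ out of fresh Brownian increments alone, via the stationary (multiplicative) independent-increment property, so that $(\hat B_t)_{t\in\R}$ is an honest two-sided Brownian motion and $N$ and $\hat A_{(-\infty,\cdot)}$ are measurable functionals of it. The hypothesis $2\mu>r-1$ re-enters here — through Theorem \ref{thm:MatrixDufresne} and Lemma \ref{normlemma} — to guarantee that $\hat A_{(-\infty,s)}$ is finite and invertible for every $s$, so that $\Phi$ is well defined; everything else is the relabelling above.
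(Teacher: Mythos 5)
Your proposal is correct and follows essentially the same route as the paper: you read the rewritten form of Proposition \ref{claim:enlarge} (obtained via \eqref{algebraparty} and \eqref{eq:Inverses}) as saying that a path functional of the two-sided Brownian motion $\hat B$ is again a matrix Brownian motion, and then relabel $\hat B \to B$, $N \to M^{(\mu)}$, which is exactly the paper's ``reversing the roles of the Brownian motions'' step. The additional care you take — that $N$ and $\hat A_{(-\infty,\cdot)}$ are measurable functionals of the two-sided path alone, and that $2\mu>r-1$ guarantees via Theorem \ref{thm:MatrixDufresne} and Lemma \ref{normlemma} that $\hat A_{(-\infty,s)}$ is finite and invertible — only makes explicit what the paper leaves implicit.
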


From this the proof of Theorem \ref{thm:TITO} is straightforward. 

%\begin{proposition}
%Suppose that $B_t, C_t$ are independent two-sided matrix valued Brownian motions with iid standard entries and $2\mu>r-1$. Consider the drifted matrix valued Brownian motion $H_t$ corresponding to $B_t+C_t$. This is defined as the strong solution of 
%\[
%dH_t=H_t(dB_t+dC_t)+(2\mu+1) H_t dt, \qquad H_0=I,
%\] 
%for $t\ge 0$, extended to negative $t$ values using the independence stationary increment property as described in Section \ref{sec:DufresneBasic}. 
%Now define the processes
%\begin{align*}
%F_t&=B_t+2\mu I t-2 \int_0^t H_s^T \left(\int_{-\infty}^s H_u H_u^T du\right) H_s ds,\\
%G_t&=C_t+2\mu I t-2 \int_0^t H_s^T \left(\int_{-\infty}^s H_u H_u^T du\right) H_s ds.
%\end{align*}
%Then $(F_t, G_t, t\ge 0)=(B_t,C_t,\ge 0)$. 
%\end{proposition}
\begin{proof}[Proof of Theorem \ref{thm:TITO}]
The introduced process $H_{t/2}$ can be equated with $M_t$, though driven by the standard matrix Brownian motion $B_{t/2}+C_{t/2}$. Then by  the corollary we have that
\begin{align*}
\hat B_t&=B_{t/2}+C_{t/2}+2\mu I t-\int_0^t M_s^T (\int_{-\infty}^s M_uM_u^T du)^{-1} M_s ds\\
&=B_{t/2}+C_{t/2}+2\mu I t- \int_0^{t/2} H_s^T (\int_{-\infty}^s H_u H_u^T du)^{-1} H_s ds
\end{align*}
is another standard matrix Brownian motion.
Checking the definitions we see that  $F_t=\frac{\hat B_{2t}}{2}+\frac{B_t-C_t}{2}$ and $G_t=\frac{\hat B_{2t}}{2}-\frac{B_t-C_t}{2}$. Since $\hat B_t$ is constructed from $B_t+C_t$, it is independent (as a process) from the process $B_t-C_t$. But this means that $\{F_t, t\ge 0\}$ and $\{G_t, t\ge 0\}$ are independent of each other and they are both standard  matrix Brownian motions. 
\end{proof}

\section{The matrix $X_t$ and $Z_t$ processes}
\label{sec:XandZ}

The process $X_t$ is actually equivalent to the $Q_t$ encountered in the proof of Theorem \ref{thm:MatrixDufresne}, and hence its sde has already been recorded in \eqref{basic_Qprocess}.
As for $Z_t$, we will again rely in part on the technology developed in the last section.  Recall $M_t$  and $N_t$ from Propositions  \ref{claim:enlarge} and \ref{claim:Inverses} and define two versions of the $Z_t$ process:
\begin{equation}
\label{twoZs}
   Z_t = M_t^{-1} \int_0^t M_s^{} M_s^{T} ds, \qquad \hat{Z}_t = N_t^{-1}  \int_0^t N_s^{} N_s^{T} ds.
\end{equation}
That is, $Z_t$ corresponds to $-\mu$ and is driven by $B_t$, $\hat{Z}_t$ to $+\mu$ and $\hat{B}_t$, and $B_t$ and $\hat B_t$ are related by Proposition \ref{claim:enlarge}.\footnote{Note that in the discussion of Section \ref{sub:IntroXandZ}, $Z_t$ was defined through $M_t$ with the  $+\mu$ drift term. The  choice to flip things around here is natural given the course of the previous arguments.}
  It\^o's formula yields
\begin{equation}
\label{Zprocesses}
  dZ_t = (\frac12 + \mu) Z_t  dt + M_t^T dt - dB_t Z_t, \quad    d \hat{Z}_t = (\frac12 - \mu) \hat{Z}_t  dt + N_t^T dt - d \hat B_t \hat Z_t.
\end{equation}
We first show how to close these equations using properties of the matrix GIG distribution (proving Theorem \ref{thm:MatrixXandZ} 
and then Corollary \ref{cor:Intertwining}). After that we consider the asymptotics of the underling eigenvalue processes 
(proving Theorem \ref{thm:Laplace}). 

\subsection{The role of the GIG}

The following rather surprising fact already 
implies the invariance in law of $Z_t$ under the map $\mu \mapsto - \mu$.

\begin{proposition}
\label{ZequalsZhat}
$Z_t = \hat Z_t$ almost surely for  $t\ge0$. 
\end{proposition}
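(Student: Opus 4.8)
The plan is to establish the identity directly, using the two facts furnished by Proposition \ref{claim:Inverses}: that $N_t = A_\infty(A_\infty - A_t)^{-1}M_t$, and that $\int_0^t N_s N_s^T\,ds = (A_t^{-1} - A_\infty^{-1})^{-1}$ almost surely. Write $\hat A_t := \int_0^t N_s N_s^T\,ds$, so that $\hat Z_t = N_t^{-1}\hat A_t$ by definition. The first step is to put both ingredients into a convenient closed form: from the definition of $N_t$ one reads off $N_t^{-1} = M_t^{-1}(A_\infty - A_t)A_\infty^{-1}$, while the elementary identity $X^{-1} - Y^{-1} = X^{-1}(Y - X)Y^{-1}$ with $X = A_t$ and $Y = A_\infty$ gives $A_t^{-1} - A_\infty^{-1} = A_t^{-1}(A_\infty - A_t)A_\infty^{-1}$, hence $\hat A_t = A_\infty(A_\infty - A_t)^{-1}A_t$. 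All the inverses appearing are almost surely well defined for $t > 0$, since $M_t \in GL_r$, the matrices $A_t$ and $A_\infty$ are positive definite, and $A_\infty - A_t = \int_t^\infty M_s M_s^T\,ds$ is positive definite as well; the case $t = 0$ is trivial because $Z_0 = \hat Z_0 = 0$.

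The second and final step is then the short computation
\begin{align*}
  \hat Z_t = N_t^{-1}\hat A_t
  &= M_t^{-1}(A_\infty - A_t)A_\infty^{-1}\cdot A_\infty(A_\infty - A_t)^{-1}A_t \\
  &= M_t^{-1}(A_\infty - A_t)(A_\infty - A_t)^{-1}A_t = M_t^{-1}A_t = Z_t,
\end{align*}
in which the two middle factors cancel in turn. As this rests on the almost sure identity of Proposition \ref{claim:Inverses}, it holds for each fixed $t$ on an event of full probability, and by continuity of both processes in $t$ it holds simultaneously for all $t \ge 0$.

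There is no genuine obstacle to overcome here: the content is carried entirely by Proposition \ref{claim:Inverses}, and the present statement is essentially a reorganization of it. The one point requiring care is the bookkeeping of non-commuting factors --- one must keep the resolvent identity in the ordered form $\hat A_t = A_\infty(A_\infty - A_t)^{-1}A_t$ (rather than commuting $A_t$ past $A_\infty - A_t$) so that the cancellation $(A_\infty - A_t)A_\infty^{-1}\cdot A_\infty(A_\infty - A_t)^{-1} = I$ is valid. As the remark preceding the proposition notes, once proved this immediately yields the invariance in law of $Z_t$ under $\mu \mapsto -\mu$, since $Z_t$ is built from the $-\mu$ process while $\hat Z_t$ is built from the $+\mu$ one.
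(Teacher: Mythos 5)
Your proof is correct and is essentially the paper's own argument: both reduce the claim to the algebraic cancellation $N_t^{-1}(A_t^{-1}-A_\infty^{-1})^{-1}=M_t^{-1}A_t$ using the two formulas supplied by Proposition \ref{claim:Inverses}, the only cosmetic difference being that you invert the resolvent identity to write $\hat A_t = A_\infty(A_\infty-A_t)^{-1}A_t$ up front, whereas the paper inserts a factor $A_t^{-1}A_t$ and cancels in place. Nothing further is needed.
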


\begin{proof}
From Proposition \ref{claim:Inverses} we have that $ \hat Z_t= N_t^{-1}  \int_0^t N_s N_s^T ds =
N_t^{-1} (A_t^{-1}-A_{\infty}^{-1})^{-1} $ as  
well as $N_t=A_\infty(A_\infty-A_t)^{-1} M_t$. Hence,
\begin{align*}
\hat Z_t & = (A_\infty(A_\infty-A_t)^{-1} M_t)^{-1} (A_t^{-1}-A_{\infty}^{-1})^{-1}\\
& = M_t^{-1} (A_\infty-A_t)A_\infty^{-1}  (A_t^{-1}-A_{\infty}^{-1})^{-1} A_t^{-1} A_t^{} \\
& = M_t^{-1}  A_t^{}, 
\end{align*}
which is the definition of $Z_t$. 
\end{proof}

For the Markov property we need:

\begin{proposition}
\label{prop:GIG}
The conditional distribution of $M_t^T A_t^{-1} M_t^{}$
%$M_t^T Z_t^{-1}$
 given $\{ Z_s, s \le t, Z_t = Z \}$ is the matrix GIG law $\eta_{-\mu,I, (ZZ^T)^{-1}}$. The conditional distribution of $Z_t(A_t^{-1}-A_{\infty}^{-1})Z_t^T$ given the same $\sigma$-field  is $\eta_{-\mu, (ZZ^T)^{-1} ,I}$.
\end{proposition}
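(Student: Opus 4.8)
The plan is to establish Proposition~\ref{prop:GIG} by first reducing both claims to a single structural statement about the joint law of $(M_t, A_t)$ and then invoking a characterization of the matrix GIG distribution. The natural route is to identify the conditional law of the pair $(M_t, Z_t)$, or equivalently of $(M_t^{} M_t^T, A_t)$, given the field $\{Z_s, s\le t\}$. I would begin by recalling from Proposition~\ref{claim:Inverses} and Proposition~\ref{ZequalsZhat} that $Z_t = \hat Z_t = M_t^{-1} A_t$, so that the two random matrices in the statement can be written as
\begin{align*}
M_t^T A_t^{-1} M_t &= Z_t^{-T} (M_t^T M_t^{}) \cdot (M_t^{-1} A_t)^{-1} \cdot \text{(rearrange)} = M_t^T (M_t Z_t)^{-1} M_t,\\
Z_t(A_t^{-1}-A_\infty^{-1})Z_t^T &= \text{(similarly, using } (A_t^{-1}-A_\infty^{-1})^{-1} = \hat A_t = N_t \hat Z_t \text{)}.
\end{align*}
The key point is that, with $V := M_t^T A_t^{-1} M_t$, a direct computation shows $V^{-1} = M_t^{-1} A_t M_t^{-T} = Z_t M_t^{-T}$, and one checks that $V (ZZ^T)^{-1} V$ has trace and determinant expressible through $Z_t$ alone when $Z_t = Z$, so the $\eta_{-\mu, I, (ZZ^T)^{-1}}$ claim is equivalent to saying $V$ is conditionally GIG. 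The second claim then follows from the first by the inversion symmetry $\eta_{p,A,B}^{-1} \eqd \eta_{-p,B,A}$ (visible from \eqref{matrixGIG} and \eqref{InvMeasure2}) together with the algebraic identity $Z_t(A_t^{-1}-A_\infty^{-1})Z_t^T = $ (a conjugate of) $V^{-1}$, using the resolvent manipulations already performed in \eqref{algebraparty}.

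Next I would set up the conditioning rigorously. Fix $t$ and consider the map $\omega \mapsto (M_t, \{Z_s\}_{s\le t})$. Because $Z_s = M_s^{-1}\int_0^s M_u M_u^T du$ satisfies the closed-form sde \eqref{Zprocesses} driven by $dB_s Z_s$, the increments of $B_s$ over $[0,t]$ decompose into a part measurable with respect to $\sigma(Z_s, s\le t)$ and an independent complementary part; schematically $M_t$ depends on the latter. The cleanest implementation is a change of variables on path space: write $dB_s = (\text{drift in } Z) + (\text{residual})$, identify the residual noise, and express $M_t$ in terms of $Z_t$ and that residual. In the scalar case \cite{MY3} this is exactly the statement that $m_t \mid \{z_s, s\le t\}$ is GIG$(\mu, 1/z_t, 1/z_t)$; here the role of the identifying input is the matrix GIG characterization of Bernadac \cite{Bernadac} and Letac--Wesolowski \cite{Letac}, which the paper has already flagged as the mechanism. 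So the structural statement I want is: \emph{the conditional law of $M_t$ given $\{Z_s, s\le t, Z_t = Z\}$ is that of $Z^T \Xi$ with $\Xi \sim \eta_{\mu, I, (ZZ^T)^{-1}}$} — which is precisely what the text announces just before Corollary~\ref{cor:Intertwining}. From $M_t = Z^T\Xi$ one gets $A_t = M_t Z_t = Z^T \Xi Z$, hence $M_t^T A_t^{-1} M_t = \Xi^T Z (Z^T\Xi Z)^{-1} Z^T \Xi = \Xi^T \Xi \cdot (\text{check}) $; more carefully $M_t^T A_t^{-1} M_t = \Xi^T Z Z^{-1}\Xi^{-1} Z^{-T} Z^T \Xi = \Xi^{} $ up to the symmetrization forced by $A_t$ being symmetric, and tracking this gives $M_t^T A_t^{-1} M_t \eqd \Xi \sim \eta_{\mu, I, (ZZ^T)^{-1}}$; finally one applies the $\mu\mapsto-\mu$ symmetry (Proposition~\ref{ZequalsZhat}, or equivalently that $\hat Z_t$ built from $N_t^{(+\mu)}$ has the same law) to replace $\mu$ by $-\mu$ in the index, matching the statement.

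To prove the structural statement itself I would follow the Letac--Wesolowski/Bernadac route: show that the pair $(U, W) := \bigl(M_t^T A_t^{-1} M_t, \ (A_t^{-1}-A_\infty^{-1})\text{-type object}\bigr)$ — or a suitably chosen pair adapted to the filtration — satisfies the independence-plus-identical-marginals property that characterizes the GIG. Concretely, using Proposition~\ref{claim:enlarge} and Proposition~\ref{claim:Inverses}, conditionally on $A_\infty$ the process $N_t$ is a $GL_r$-Brownian motion with drift $+\mu$, so $\hat Z_t = N_t^{-1}\int_0^t N_s N_s^T ds$ is (conditionally) a copy of the $Z$-process with drift $+\mu$; combined with $Z_t = \hat Z_t$ this forces the conditional law of $M_t$ given $\{Z_s,s\le t\}$ to be invariant under the swap that exchanges the two Wishart-type building blocks, and Bernadac's theorem says the only laws on $\mathcal P$ with that matrix-Matsumoto--Yor property are the GIGs. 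I expect the main obstacle to be precisely this step: making the path-space conditioning and the application of the algebraic GIG characterization fully rigorous in the non-commutative setting, since one must be careful that the ``residual noise'' after projecting onto $\sigma(Z_s,s\le t)$ is genuinely independent of the GIG factor and that the matrix identities (which I have only sketched above and which involve non-commuting $Z$, $\Xi$, and their inverses) actually close up — the symmetrization of $A_t$ is where sign/transpose bookkeeping is easy to get wrong. Everything downstream (deducing the second conditional law from the first by inversion symmetry) is routine once that is in place.
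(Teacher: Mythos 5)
Your third paragraph does land on the paper's actual method: Proposition \ref{prop:GIG} is proved by feeding an explicit pair of conditionally independent Wishart-type matrices into Bernadac's matrix Matsumoto--Yor characterization of the GIG, with the independence coming from Propositions \ref{claim:enlarge} and \ref{claim:Inverses}. But the step you leave as ``a suitably chosen pair'' is the one that carries the whole proof, and without it the independence-plus-marginals verification cannot be started. The paper's choice is
\[
X = M_t^T A_t^{-1} M_t, \qquad Y = M_t^T (A_\infty - A_t)^{-1} M_t,
\]
for which $(X+Y)^{-1} = Z_t(A_t^{-1}-A_\infty^{-1})Z_t^T$ and $X^{-1}-(X+Y)^{-1} = Z_t A_\infty^{-1} Z_t^T$. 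Here $Y$ is independent of $\sigma(B_s, s\le t)$ (hence of $X$ and of $\{Z_s, s\le t\}$) by the multiplicative-increment property of $M_t$, and is $\gamma_{2\mu,I}$-distributed by Theorem \ref{thm:MatrixDufresne}; the independence of $(X+Y)^{-1}$ and $X^{-1}-(X+Y)^{-1}$ reduces to the conditional independence of $A_t^{-1}-A_\infty^{-1}$ (measurable in $\sigma(\hat B_s,s\le t)$ by Proposition \ref{claim:Inverses}) and $A_\infty$; and $Z_tA_\infty^{-1}Z_t^T$ is conditionally $\gamma_{2\mu, ZZ^T}$. Lemma \ref{lem:Bernadac} (the form of Bernadac's theorem the paper uses) then outputs \emph{both} conclusions of the proposition at once.

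The genuine error in your plan is the claimed derivation of the second conclusion from the first. You assert $Z_t(A_t^{-1}-A_\infty^{-1})Z_t^T$ is ``a conjugate of $V^{-1}$'' with $V=M_t^TA_t^{-1}M_t$; in fact $V^{-1}=Z_tA_t^{-1}Z_t^T$, so
\[
Z_t(A_t^{-1}-A_\infty^{-1})Z_t^T = V^{-1} - Z_tA_\infty^{-1}Z_t^T,
\]
which is $V^{-1}$ minus an \emph{independent} Wishart term, not a deterministic function of $V$. Hence the inversion symmetry $\eta_{p,A,B}^{-1}\eqd\eta_{-p,B,A}$ cannot transfer the first conditional law to the second; the second law must come out of the characterization itself (as it does in Lemma \ref{lem:Bernadac}). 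The detour through the ``structural statement'' $M_t=Z^T\Xi$ in your first two paragraphs is harmless but buys nothing: since $M_t^T=(M_t^TA_t^{-1}M_t)Z_t$, it is literally a restatement of the first claim rather than a reduction of it, and the $\mu\mapsto-\mu$ adjustment you invoke there is only the bookkeeping of which drift sign defines $Z_t$ in Section \ref{sec:XandZ}, not an application of Proposition \ref{ZequalsZhat}.
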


We go ahead with the consequences of Proposition \ref{prop:GIG}, and return to its proof at the end of the section.

\begin{proof}[Proof of Theorem \ref{thm:MatrixXandZ}]
We follow the strategy of \cite{MY3}, employing Theorem 7.12 of \cite{LS} to close the equation for $Z_t$ in \eqref{Zprocesses}
as follows. Taking conditional expectation throughout with respect to $\mathcal{Z}_t = \sigma( Z_s, s \le t )$, the ideas there show that 
\begin{equation}
\label{prelimZ_1}
  dZ_t = (\frac12 + \mu) Z_t  dt + \ev [  M_t^T  \, | \, Z_s, s \le t ]  dt - d W_t Z_t,
\end{equation}
where $W_t$ is matrix Brownian motion adapted to $\mathcal{Z}_t \subsetneq \mathcal{B}_t = \sigma(B_s, s \le t)$. Next, 
since
$M_t^T A_t^{-1} M_t^{T} = M_t^T Z_t^{-1} \sim \eta_{-\mu, I, (Z_t^{}Z_t^T)^{-1}}$ by Proposition \ref{prop:GIG}, there is the formula
$\ev  [  M_t^T  \, | \, Z_s, s \le t ]  = \kappa_{-\mu}( I, (Z_t Z_t^T)^{-1}) Z_t$. We can then rewrite the above as in 
\begin{equation}
\label{prelimZ}
     dZ_t = (\frac12 + \mu) Z_t  dt +  \kappa_{-\mu}( I, (Z_t Z_t^T)^{-1}) Z_t   dt - d W_t Z_t,
\end{equation}
and believing that one can let $\mu \mapsto -\mu$,  $Z_t$ is the stated diffusion (in its own filtration).

Note that above construction entails that  
$$
W_t = B_t+\int_0^t [M_s^T Z_s^{-1}- (\frac12 + \mu) - \kappa_{-\mu}( I, (Z_s Z_s^T)^{-1}) ] ds.
$$
Similar to the case considered in \cite{LS}, one checks that this is a matrix Brownian motion by using of It\^o's formula to find an ordinary differential equation
for $t  \mapsto \ev [ e^{ i \Tr (C W_t)} ]$, $C \in GL_r$.

To verify that the formulas themselves hold up under the change of sign, consider $\hat Z_t$ for which the above procedure leads to the following analog of
\eqref{prelimZ_1}:
\begin{equation}
\label{prelimhatZ_1}
d\hat Z_t = ( \frac12 - \mu) \hat Z_t dt + \ev [ N_t^T  |  \, \hat{Z}_s^{}, s \le t ]  dt - d \hat{W}_t \hat{Z}_t  dt,
\end{equation}
with a similar considerations for the new matrix Brownian motion $\hat{W}_t$. Proposition \ref{prop:GIG} again applies after noting that 
\begin{align*}
N_t^T = M_t^T  (A_\infty-A_t)^{-1} A_\infty = M_t^T  A_t^{-1} A_t (A_\infty - A_t)^{-1} A_\infty=  Z_t^{-T} 
(A_t^{-1}-A_\infty^{-1})^{-1}. 
\end{align*}
That is, $N_t^{T} Z_t^{-1} = (Z_t  (A_t^{-1} -A_{\infty}^{-1})Z_t^T)^{-1}$ which has law $\eta_{\mu, I, (Z_t {Z_t}^{T})^{-1}}$ conditional on $\{ {Z}_s, s \le t \} $. 
Then by Proposition \ref{ZequalsZhat} we also have that $N_t^{T} \hat Z_t^{-1} $ has this same law conditional on $\{ \hat{Z}_s, s \le t \} $. Substituting into \eqref{prelimhatZ_1} gives the desired sde, that is, \eqref{prelimZ} with a sign flip on the parameter $\mu$.
\end{proof}

\begin{proof}[Proof of Corollary \ref{cor:Intertwining}]

Proposition 2.1 of \cite{CPY} provides a soft criteria for two processes $\mathsf{X}_t$ and $\mathsf{Y}_t$ defined on the same probability space  to intertwine. With $\mathsf{X}_t$ taking values in $E$ and $\mathsf{Y}_t$ taking values in $F$ (possibly separate measure spaces), it is assumed that:

\medskip

 (i) $\textsf{X}_t$ is Markovian with respect to a filtration $\mathcal{F}_t$,  and $\textsf{Y}_t$ is Markovian with respect to a filtration $\mathcal{G}_t$ such that $\mathcal{G}_t \subset \mathcal{F}_t$,

 (ii)  There exists a Markov kernel $\Lambda: E \mapsto F$ for which  
$
   \E [ h(\textsf{X}_t) | \mathcal{G}_t]  = ( \Lambda h) (\textsf{Y}_t)
$
for all $t>0$ and  integrable  $h: E \mapsto \mathbb{R}_+$.  

\medskip

\noindent
Given this the outcome is that
$T^{\textsf{Y}}_t \Lambda = \Lambda T^{\textsf{X}}_t$ as operators under additional ``mild continuity assumptions".

From what we have just shown the above applies directly  to $ \mathsf{X}_t = N_t$ and $\mathsf{Y}_t = \hat{Z}_t$. The mild continuity assumptions being easily satisfied as both choices are continuous pathed Feller processes on $\R^{r\times r}$. 
(Note since the original statement takes the positive $\mu$ drift, it is consistent to consider here $(N_t, \hat{Z}_t)$ rather than $(M_t, Z_t)$ for which there is a corresponding result.)

For (i): $N_t$ is Markovian with respect to $\hat{\mathcal{B}}_t = \sigma( \hat B_s, s \le t)$ and $\hat{Z}_t$ is Markovian with respect to its own filtration $\mathcal{Z}_t$ which the proof of Theorem \ref{thm:MatrixXandZ} shows is contained strictly inside $\hat{\mathcal{B}}_t$.
And (ii) is the second point of Proposition \eqref{prop:GIG}:  $\hat{Z}_t^{-T} N_t^{} \sim \eta_{\mu, I, (Z Z^T)^{-1}}$ and so,
$$
  \ev[ h (N_t) | \hat{\mathcal{Z}}_t, \hat{Z}_t = Z ] = \int_{GL_r} h(Z^T X) \eta(dX)  := \Lambda h(Z),
$$
where $\eta = \eta_{\mu, I, (Z Z^T)^{-1}}$,

For the second part, take $ \mathsf{X}_t = \hat{Z}_t^{} N_t^{-T}$. Then the representation of the intertwining kernel by
$
    \Lambda h(Z)  = \int_{\mathcal{P}}  h(X^{-1}) \eta(dX),
$
now viewed as from $\mathcal{P}$ into $GL_r$, follows from $ (N_t^T  \hat{Z}_t^{-1})^{-1}$ having conditional law $\eta_{-\mu, I (ZZ^T)^{-1}, I} = \eta^{-1}$.
 \end{proof}

We now return to the proof of Proposition  \ref{prop:GIG}.   
The first step, Lemma \ref{lem:Bernadac} below,  is designed to implement Bernadac's characterization of the matrix GIG  
\cite[Theorem 5.1]{Bernadac} to this end. The notation $\gamma_{p, A}$ introduced in the statement
refers to the non-central Wishart law, which has density proportional 
 $(\det X)^{\frac{p-r-1}{2}} e^{-\frac12 \Tr (A^{-1} X)} \ind_{\mathcal P}(X)$ for $A \in \mathcal{P}$. It is worth pointing out  that the posited independence structure in the statement (of $(X+Y)^{-1}$ and $X^{-1} - (X+Y)^{-1}$ given that of  $X$ and $Y$) is now commonly referred to as the Matsumoto-Yor property.
The proof of Proposition \ref{prop:GIG} is then completed by specifying our particular choice of $X$ and $Y$ in Lemma \ref{lem:ApplyBernadac}.

\begin{lemma} 
\label{lem:Bernadac}
Suppose that the $\mathcal{P}$-valued random variables  $X$ and $Y$ are independent, and that $(X+Y)^{-1}$ and  $X^{-1}-(X+Y)^{-1}$ are also independent. Suppose further that $Y\sim \gamma_{2p,A^{-1}}$ and $X^{-1}-(X+Y)^{-1} \sim \gamma_{2p,  B^{-1}}$ 
for $A, B \in \mathcal{P}$. Then $X\sim  \eta_{-p, A,B}$ and $(X+Y)^{-1}\sim \eta_{-p,B,A}$.
\end{lemma}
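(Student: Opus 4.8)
The plan is to derive Lemma~\ref{lem:Bernadac} as a direct consequence of Bernadac's characterization of the matrix GIG distribution \cite[Theorem 5.1]{Bernadac}; the real work is to verify that our hypotheses are precisely those of that theorem and to track how the parameters $p,A,B$ are distributed among the four laws in play.

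First I would record the elementary algebraic fact underlying the whole set-up, namely that the map
\[
\phi(x,y)=\bigl((x+y)^{-1},\ x^{-1}-(x+y)^{-1}\bigr)
\]
is an involution of $\mathcal{P}\times\mathcal{P}$: writing $u=(x+y)^{-1}$ and $v=x^{-1}-u$ one solves $x=(u+v)^{-1}$ and $y=u^{-1}-(u+v)^{-1}$, so $\phi\circ\phi=\mathrm{id}$. Consequently the hypotheses say exactly that both $(X,Y)$ and $\phi(X,Y)=\bigl((X+Y)^{-1},\,X^{-1}-(X+Y)^{-1}\bigr)$ have independent coordinates, i.e.\ the Matsumoto-Yor property in the form treated by Bernadac, and that in addition the two ``Wishart legs'' $Y$ and $X^{-1}-(X+Y)^{-1}$ are non-central Wishart of the common shape $2p$ with rate matrices $A$ and $B$ respectively. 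I would also note that $X$ is automatically absolutely continuous, since $X^{-1}=(X+Y)^{-1}+\bigl(X^{-1}-(X+Y)^{-1}\bigr)$ is a sum of independent variables the second of which has a density; this supplies the mild regularity that the cited theorem requires.

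Next I would quote Bernadac's theorem: under this independence structure and regularity, together with the non-degeneracy $2p>r-1$ (which is exactly $2\mu>r-1$ in the intended applications), there must exist $q>\tfrac{r-1}{2}$ and $a,b\in\mathcal{P}$ with $X\sim\eta_{-q,a,b}$ and $Y\sim\gamma_{2q,a^{-1}}$. The forward matrix Matsumoto-Yor property of Letac-Wesolowski \cite{Letac} then gives $(X+Y)^{-1}\sim\eta_{-q,b,a}$ and $X^{-1}-(X+Y)^{-1}\sim\gamma_{2q,b^{-1}}$. Comparing these Wishart laws against the hypotheses $Y\sim\gamma_{2p,A^{-1}}$ and $X^{-1}-(X+Y)^{-1}\sim\gamma_{2p,B^{-1}}$ forces $q=p$, $a=A$, $b=B$, whence $X\sim\eta_{-p,A,B}$ and $(X+Y)^{-1}\sim\eta_{-p,B,A}$, the roles of $A$ and $B$ swapping between the two GIG legs because $\phi$ interchanges the two Wishart legs. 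As a sanity check, this symmetric appearance of $A$ and $B$ is consistent with the invariance of the density \eqref{matrixGIG} under $X\mapsto X^{-1}$ combined with $A\leftrightarrow B$, and with the invariance of $\mu_r$ under inversion recorded in \eqref{InvMeasure2}.

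The hard part will be purely bookkeeping across conventions: \cite{Bernadac} and \cite{Letac} normalize the matrix GIG and the central versus non-central Wishart differently from \eqref{WishartLaw} and \eqref{matrixGIG} (placement of the $\tfrac12$ factors in the exponentials, rate versus scale parametrization, the sign in the power of $\det$), so most of the written proof consists of translating \cite[Theorem 5.1]{Bernadac} into the conventions fixed just before the lemma and confirming that $2p$ and the ordered pair $(A,B)$ land in the asserted slots. The remaining points---that the non-concentration hypotheses of that theorem hold here, and that the forward direction of the Matsumoto-Yor property is available in the form used above---are routine.
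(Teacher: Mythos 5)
Your overall plan---deduce the lemma from a black-box characterization of the matrix GIG and then match parameters---is in the right spirit, but the black box you invoke is not the one that \cite[Theorem 5.1]{Bernadac} provides, and the one you actually need carries hypotheses you have not verified. Bernadac's Theorem 5.1 is a random continued fraction (fixed-point) characterization: the distributional equation $X \eqd (Y'+(Y+X)^{-1})^{-1}$, with $X$, $Y$, $Y'$ mutually independent and $Y\sim\gamma_{2p,A^{-1}}$, $Y'\sim\gamma_{2p,B^{-1}}$, is satisfied precisely when $X\sim\eta_{-p,A,B}$. It is not the converse Matsumoto--Yor property (``independence of $(X,Y)$ and of $((X+Y)^{-1},X^{-1}-(X+Y)^{-1})$ forces $X$ to be GIG and $Y$ to be Wishart'') that your second paragraph attributes to it. That converse is a Letac--Wesolowski-type theorem, and on the cone $\mathcal{P}$ it is established under smoothness and positivity assumptions on the densities of $X$ and $Y$; your remark that $X$ is absolutely continuous because $X^{-1}$ is a convolution with a Wishart gives a density but not the required regularity, and in the lemma's eventual application ($X$ a conditional law of $M_t^TA_t^{-1}M_t$) checking such smoothness would be a genuine extra burden, not routine bookkeeping. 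As written, then, the argument rests on a misidentified theorem whose actual hypotheses are unchecked.

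The paper's proof shows how to avoid the converse direction entirely, precisely because the hypotheses already specify the laws of $Y$ and of $V=X^{-1}-(X+Y)^{-1}$. Setting $U=(X+Y)^{-1}$, the algebraic identity $X=(U+V)^{-1}$ together with the assumed independence of $U$ and $V$ lets one replace $V$ by an independent copy $Y'\sim\gamma_{2p,B^{-1}}$, chosen independent of $(X,Y)$ and hence of $U$; this produces exactly the fixed-point equation $X\eqd(Y'+(Y+X)^{-1})^{-1}$ to which Bernadac's theorem applies, with no density assumptions on $X$ whatsoever. The symmetric manipulation $U=(Y+(V+U)^{-1})^{-1}$, replacing $Y$ by an independent copy $V'\sim\gamma_{2p,A^{-1}}$, yields $(X+Y)^{-1}\sim\eta_{-p,B,A}$. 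If you wish to keep your route, you must cite the correct converse theorem and supply the regularity it demands; otherwise the substitution argument above is both shorter and hypothesis-free. (Your parameter-matching step, and the involution property of $\phi$, are fine; note only that the ``sanity check'' via $X\mapsto X^{-1}$ sends $\eta_{-p,A,B}$ to $\eta_{p,B,A}$, not $\eta_{-p,B,A}$, and concerns $X^{-1}$ rather than $(X+Y)^{-1}$ in any case.)
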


\begin{proof}
Set $U=(X+Y)^{-1}$ and $V=X^{-1}-(X+Y)^{-1}$.  Let $Y'$ be a copy of $V$ (that is, $Y' \sim  \gamma_{2p,  B^{-1}}$) independent of both  $X$ and $Y$.  Then 
\begin{align*} 
X=(U+V)^{-1}\eqd (Y'+U)^{-1}=(Y'+(Y+X)^{-1})^{-1}.
\end{align*}
Bernadac's result is that the above distributional identity holds with $X, Y, $ and $Y'$ all independent and $Y$ and $ Y'$ having the  corresponding Wishart distributions if and only if  $X \sim \eta_{-p, A,B}$.

Alternatively, if we let $V^{\prime}$ be a copy of $Y$  (so $V' \sim  \gamma_{2p, A^{-1}}$) independent of $U$ and $V$ we will have that
$$
   U = (Y + (V+ U)^{-1})^{-1}  \eqd ( V' + (V + U)^{-1})^{-1},
$$
and the result is that $U = (X+Y)^{-1} \sim \eta_{-p, B,A}$.
\end{proof}

\begin{lemma}
\label{lem:ApplyBernadac} 
Set
\[
X = M_t^T {Z_t}^{-1} = M_t^T A_t^{-1} M_t, \qquad Y = M_t^T (A_\infty - A_t) ^{-1} M_t.
\]
Then,
$$
 (X+Y)^{-1} = Z^{}_t(A_t^{-1}-A_{\infty}^{-1}) Z_t^T,
 \qquad X^{-1}-(X+Y)^{-1} = Z_t^{} A_\infty^{-1} Z_t^T.
$$
Conditioned on the $\sigma$-field generated by $\{ Z_s, s \le t \} $ the random matrices  $X$ and $Y$ are independent,
 as are  \mbox{$(X+Y)^{-1}$} and $X^{-1}-(X+Y)^{-1}$. Further,  the conditional distribution of 
$Y$ is  $\gamma_{2\mu,  A^{-1}}$ where $A=I$ and that of $X^{-1}-(X+Y)^{-1}$ is  $\gamma_{2\mu,  B^{-1}}$
where $B = (Z_t^{} Z_t^{T})^{-1}$.
\end{lemma}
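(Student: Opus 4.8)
The plan is to verify, one by one, the hypotheses that let us invoke Lemma \ref{lem:Bernadac} with $p=\mu$, $A=I$, $B=(Z_tZ_t^T)^{-1}$: the displayed algebraic identities, then the conditional independence of $X$ and $Y$ given $\mathcal{G}:=\sigma(Z_s,\,s\le t)$ together with $Y\sim\gamma_{2\mu,I}$, and finally the conditional independence of $(X+Y)^{-1}$ and $X^{-1}-(X+Y)^{-1}$ given $\mathcal{G}$ together with $X^{-1}-(X+Y)^{-1}\sim\gamma_{2\mu,Z_tZ_t^T}$. The algebra I would dispose of first: writing $A_\infty-A_t=M_t\tilde A_\infty M_t^T$ with $\tilde A_\infty:=\int_0^\infty M_{t,t+u}M_{t,t+u}^T\,du$ and $M_{t,t+u}=M_t^{-1}M_{t+u}$, one reads off $Y=M_t^T(A_\infty-A_t)^{-1}M_t=\tilde A_\infty^{-1}$, and then using $A_t^{-1}+(A_\infty-A_t)^{-1}=A_t^{-1}A_\infty(A_\infty-A_t)^{-1}$, the rewriting $(A_\infty-A_t)A_\infty^{-1}A_t=A_t(A_t^{-1}-A_\infty^{-1})A_t$, and $Z_t=M_t^{-1}A_t$, $Z_t^{T}=A_tM_t^{-T}$, a routine resolvent manipulation gives $(X+Y)^{-1}=Z_t(A_t^{-1}-A_\infty^{-1})Z_t^{T}$ and, subtracting from $X^{-1}=Z_tA_t^{-1}Z_t^{T}$, also $X^{-1}-(X+Y)^{-1}=Z_tA_\infty^{-1}Z_t^{T}$.

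For the assertions about $Y$: by the multiplicative-increment property \eqref{multincr}, $\tilde A_\infty$ is independent of $\mathcal{F}_t:=\sigma(M_s,\,s\le t)$, while Theorem \ref{thm:MatrixDufresne} gives $\tilde A_\infty\sim\gamma_{2\mu}^{-1}$; hence $Y=\tilde A_\infty^{-1}\sim\gamma_{2\mu}=\gamma_{2\mu,I}$ and $Y$ is independent of $\mathcal{F}_t\supseteq\mathcal{G}$. Since $X=M_t^TA_t^{-1}M_t$ is $\mathcal{F}_t$-measurable, the usual tower-property argument, conditioning first on $\mathcal{F}_t$ and then on $\mathcal{G}$, shows that $X$ and $Y$ are conditionally independent given $\mathcal{G}$ with $Y$ keeping its unconditional law; this identifies $A=I$.

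The crux is the remaining pair of claims, and the key point is that $\mathcal{G}$ is independent of $\sigma(A_\infty)$. Propositions \ref{claim:enlarge} and \ref{claim:Inverses} exhibit $N_t$ as the pathwise-unique strong solution of $dN_t=N_t\,d\hat B_t+(\tfrac12+\mu)N_t\,dt$, $N_0=I$, driven by the matrix Brownian motion $\hat B_t$, which is independent of $A_\infty$; so $\{N_s,\,s\le t\}$ is a measurable functional of $\hat B$ on $[0,t]$ and hence independent of $A_\infty$. By Proposition \ref{ZequalsZhat}, $Z_s=\hat Z_s=N_s^{-1}\int_0^sN_uN_u^T\,du$, whence $\mathcal{G}\subseteq\mathcal{N}_t:=\sigma(N_s,\,s\le t)$ and therefore $\mathcal{N}_t$, a fortiori $\mathcal{G}$, is independent of $\sigma(A_\infty)$. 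Now $X^{-1}-(X+Y)^{-1}=Z_tA_\infty^{-1}Z_t^{T}$ is a function of the $\mathcal{G}$-measurable $Z_t$ and of $A_\infty\sim\gamma_{2\mu}^{-1}$ independent of $\mathcal{G}$, so a change of variables in the Wishart density gives that its conditional law given $\mathcal{G}$ is $\gamma_{2\mu,Z_tZ_t^T}$, which identifies $B=(Z_tZ_t^T)^{-1}$; while $(X+Y)^{-1}=Z_t(A_t^{-1}-A_\infty^{-1})Z_t^{T}=Z_t\big(\int_0^tN_sN_s^T\,ds\big)^{-1}Z_t^{T}$ is $\mathcal{N}_t$-measurable by Proposition \ref{claim:Inverses}, so conditioning first on $\mathcal{N}_t$ and then on $\mathcal{G}$, using $A_\infty\perp\mathcal{N}_t$ and the $\mathcal{N}_t$-measurability of $Z_t$, yields that $(X+Y)^{-1}$ and $X^{-1}-(X+Y)^{-1}$ are conditionally independent given $\mathcal{G}$. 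With the list of hypotheses complete, Lemma \ref{lem:Bernadac} finishes the proof, and with it Proposition \ref{prop:GIG}.

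I expect the main obstacle to be precisely this independence $\mathcal{G}\perp\sigma(A_\infty)$: it is invisible in the original filtration $\mathcal{F}_t$ --- where $(X+Y)^{-1}$ and $X^{-1}-(X+Y)^{-1}$ are even deterministic functions of one another --- and it is only the enlargement-of-filtration construction of $N_t$ in Propositions \ref{claim:enlarge}--\ref{claim:Inverses}, together with the identity $Z_t=\hat Z_t$, that makes it available; once this is recognized, the two conditional-independence statements and the parameter identifications are routine conditioning arguments.
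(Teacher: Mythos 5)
Your proposal is correct and follows essentially the same route as the paper: the algebraic identities by resolvent manipulation, the law and independence of $Y$ from the multiplicative-increment property plus the matrix Dufresne identity, and the second conditional independence via the enlargement-of-filtration facts that $\hat B$ (hence $N$, hence $\int_0^t N_sN_s^Tds = A_t^{-1}-A_\infty^{-1}$ and $Z_t=\hat Z_t$) is independent of $A_\infty$. The extra detail you supply on the conditioning steps is a faithful expansion of the paper's terser argument, and your closing remark correctly identifies the crux.
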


\begin{proof}
The formulas for $(X+Y)^{-1}$ and $X^{-1} -(X+Y)^{-1}$ follow from some simple algebra.

Clearly $Y$ is independent of $\sigma(B_s, s\le t)$, and thus is independent of both $X$ and $\{Z_s, s \le t\}$. For the independence of $(X+Y)^{-1}$
and $X^{-1} - (X+Y)^{-1}$  we need to prove the conditional independence of $A_t^{-1}-A_{\infty}^{-1}$ and $A_\infty$.
By Proposition \ref{claim:Inverses} 
 we know that $A_t^{-1}-A_{\infty}^{-1} = \int_0^T N_s N_s^T ds  $ is measurable $\sigma(\hat B_s, s\le t)$ and so is  independent of $A_\infty$. 
 But $Z_t$ is in this $\sigma$-field as well ($Z_t = \hat Z_t$) implying there is also 
 conditional independence.
\end{proof}

\subsection{Asymptotics}

We start by identifying the underlying eigenvalue processes:

\begin{lemma}
\label{XandZspec}
Denote the (ordered, nonintersecting) eigenvalues of $X_t$ by $0 \le x_r \le x_{r-1} \le \cdots  \le x_1$. These perform the joint diffusion:
\begin{equation}
\label{Xeigs_sde}
   d x_i  = 2 x_i db_i + \left( 1+ (2-2 \mu) x_i + x_i \sum_{j \neq i} \frac{x_i + x_j}{x_i - x_j}   \right) dt. 
\end{equation}
For $Z_t$ consider instead the similarly ordered singular values $\{ z_i\}$ of  $Z_t$. This family is  also Markov, and 
is governed by
\begin{equation}
\label{Zeigs_sde}
  dz_i = z_i db_i +  \left( \big(\frac{r}{2} - \mu\big) z_i + \frac{1}{ z_i} [ \kappa_{\mu} (\Lambda_z, I) ]_{ii} + z_i \sum_{j \neq i} \frac{z_j^2}{z_i^2-z_j^2} \right) dt,
\end{equation}
where $\Lambda_z$ denotes the diagonal matrix $[ \Lambda_z ]_{ii} =  z_i^{-2}$.
\end{lemma}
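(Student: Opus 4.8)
The plan is to obtain both systems by computing the It\^o differential of the spectral representation of the relevant matrix process, using the same technique already rehearsed in the proof of Proposition \ref{Prop:commoneigs} (and again in the proof of Lemma \ref{normlemma}). For the eigenvalues of $X_t$ this is essentially immediate: $X_t$ solves \eqref{basic_Qprocess} (it equals the process $Q_t$), which is $O(r)$-invariant, so writing $X_t = U_t^T \Lambda_t U_t$ with $\Lambda_t = \mathrm{diag}(x_1,\dots,x_r)$ and carrying out the standard orthogonal-conjugation computation is routine. One introduces $\mathsf{B}_t = U_t B_t U_t^T$ and the Doob--Meyer decomposition $dU_t U_t^T = d\Gamma_t + dG_t$ with $d\Gamma_t$ skew and $dG_t$ of finite variation, forces the off-diagonal martingale part to vanish to solve for $d\Gamma_{ij}$ in terms of the noise and the $x_i$'s (exactly as in \eqref{TheN}), and then collects the diagonal terms. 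The quadratic interaction $\sum_{j\neq i}(x_i-x_j)^{-1}(\dots)$ assembles into $x_i\sum_{j\neq i}\frac{x_i+x_j}{x_i-x_j}\,dt$ just as the analogous term did in \eqref{interaction1}; the drift $I\,dt + (1-2\mu)Q_t\,dt + (\Tr Q_t) I\,dt$ contributes $1 + (2-2\mu)x_i\,dt$ on the diagonal after conjugation (the $\Tr$ term contributing nothing to the individual eigenvalues since $U(\Tr Q)I U^T = (\Tr\Lambda)I$ off-diagonal, hmm --- more precisely the $(\Tr Q_t)I\,dt$ piece shifts all eigenvalues equally, giving $(\Tr\Lambda_t)\,dt$, but this is exactly absorbed: recheck against \eqref{Xprocess}/\eqref{basic_Qprocess} shows the net diagonal drift is $1+(2-2\mu)x_i+\sum_j x_j$, and the cross-variation term $x_i\sum_{j\ne i}\frac{x_i+x_j}{x_i-x_j}$ rewrites as $\sum_{j\ne i}\frac{x_i^2+x_j^2}{x_i-x_j} - \sum_{j\ne i}x_j$... ). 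I would be careful here and simply verify the final coefficient matches \eqref{Xeigs_sde} by direct bookkeeping; that the $x_i$'s are nonintersecting and that coincident initial data are entrance points follows from the form of the generator exactly as cited for \eqref{wishart_eigs} via \cite[\S 4]{AGZ}.

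For $Z_t$ the situation is genuinely different because $Z_t$ is not a symmetric matrix, so one must work with the singular value decomposition $Z_t = U_t^T \mathrm{diag}(z_1,\dots,z_r) V_t$ with $U_t, V_t$ orthogonal, or equivalently diagonalize the symmetric positive-definite matrix $S_t := Z_t Z_t^T$ whose eigenvalues are $z_i^2$. I would take the latter route: from \eqref{Zprocesses}, $dZ_t = (\tfrac12+\mu)Z_t\,dt + M_t^T\,dt - dB_t Z_t$, hence by It\^o
\begin{align*}
dS_t = d(Z_t Z_t^T) = dZ_t\, Z_t^T + Z_t\, dZ_t^T + dZ_t\, dZ_t^T,
\end{align*}
and the cross term $dZ_t\,dZ_t^T = dB_t Z_t Z_t^T dB_t^T = (\Tr S_t) I\,dt$, while the $M_t^T\,dt$ pieces produce $M_t^T Z_t^T\,dt + Z_t M_t\,dt$. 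Using $M_t^T Z_t^{-1} = M_t^T A_t^{-1} M_t$ and Proposition \ref{prop:GIG}, the conditional law of $M_t^T Z_t^{-1}$ given $\{Z_s, s\le t\}$ is $\eta_{-\mu, I, (Z_tZ_t^T)^{-1}}$; combined with the $\mu\mapsto-\mu$ invariance from Theorem \ref{thm:MatrixXandZ} this is where the mean $\kappa_\mu$ enters. The key point is that, because of the rotational invariance of the whole construction, the eigenvector process of $S_t$ decouples from the eigenvalues in the sense that the eigenvalue SDE closes, and projecting onto $\sigma(z_1,\dots,z_r)$ (as in the proof of Theorem \ref{thm:MatrixXandZ}, citing \cite{LS}) replaces $M_t^T$ by its conditional mean. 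Writing $s_i = z_i^2$, the spectral calculation of the type in Proposition \ref{Prop:commoneigs} gives $ds_i = 2\sqrt{s_i}\cdot(\text{noise}) + (\dots)\,dt$ with interaction term $\sum_{j\neq i}\frac{\text{(something)}}{s_i - s_j}$; changing variables back to $z_i = \sqrt{s_i}$ and simplifying the repulsion into $z_i\sum_{j\neq i}\frac{z_j^2}{z_i^2 - z_j^2}$ yields \eqref{Zeigs_sde}. The appearance of $[\kappa_\mu(\Lambda_z, I)]_{ii}/z_i$ comes from the diagonal entries of $U_t(M_t^T\,\text{-conditional-mean})U_t^T$ together with the fact (noted after Theorem \ref{thm:MatrixXandZ}) that when $A,B$ are diagonal $\kappa_p(A,B)$ is diagonal, so the eigenvector rotation acts diagonally on it.

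The main obstacle I anticipate is making the projection-onto-the-singular-value-filtration argument rigorous for $Z_t$ — specifically, showing that the eigenvalue/singular-value process of $Z_t$ is Markov in its own right and that taking the conditional expectation of $M_t^T$ (equivalently of the GIG variable) given $\{z_1(s),\dots,z_r(s): s\le t\}$ rather than given the full $\{Z_s, s\le t\}$ yields the same drift. This requires knowing that the conditional law of $M_t^T Z_t^{-1}$ depends on $\{Z_s, s\le t\}$ only through the \emph{current} eigenvalues of $Z_tZ_t^T$, which follows from Proposition \ref{prop:GIG} (the law is $\eta_{-\mu, I, (Z_tZ_t^T)^{-1}}$, hence a function of $Z_tZ_t^T$ alone and, by rotation invariance after also rotating by the eigenvectors, of its spectrum), plus the observation that the noise driving the eigenvalues is measurable with respect to the eigenvalue filtration — this last point is the analogue of the subtlety flagged around \eqref{zdiffusion} in the scalar case and should be handled by the same Lévy-characterization argument used for $\bar B_t$ in the proof of Theorem \ref{thm:MatrixXandZ}. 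A secondary, purely computational nuisance is correctly tracking how the conjugation by the (time-dependent) eigenvector matrix interacts with the It\^o correction terms $d\Gamma\,d\Gamma^\dagger$ etc., but this is mechanical given the template already in the paper, so I would present it by analogy and only spell out the diagonal drift and the interaction-term simplification.
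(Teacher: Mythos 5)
Your proposal is correct and uses essentially the same machinery as the paper: the standard spectral/Dyson-type It\^o computation for the eigenvalues of $X_t$ (with the same bookkeeping you flag, namely $\sum_{j\neq i}\frac{x_i^2+x_j^2}{x_i-x_j}=x_i\sum_{j\neq i}\frac{x_i+x_j}{x_i-x_j}-\sum_{j\neq i}x_j$ cancelling against the $\Tr Q$ drift), and a diagonalization of the squared singular-value matrix of $Z_t$ with the GIG mean entering through the rotation-covariance of $\kappa_\mu$. The only (cosmetic) differences are that the paper obtains \eqref{Xeigs_sde} by simply substituting $x_i=p_i^{-1}$ into the already-derived \eqref{CommonEigSDE}, and for $Z_t$ it works with $Z_t^TZ_t$ starting directly from the closed equation \eqref{Zdiffusion} of Theorem \ref{thm:MatrixXandZ} (so the filtration/projection worry you raise is already disposed of there rather than needing to be revisited at the singular-value level).
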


\begin{proof} As $X_t$ is equivalent in law to $Q_t$ used throughout Section \ref{sec:DufresneBasic}, Proposition \ref{Prop:commoneigs}
describes the process for the inverse eigenvalues of $X_t$. That is to say that \eqref{Xeigs_sde} follows from \eqref{CommonEigSDE} upon making the substitution $x_i  = p_i^{-1}$ in the $\beta=1$ instance of that equation.

The argument for $Z_t$ requires only a couple of additional observations. Write
\begin{equation}
\label{ZZSDE}
  d (Z_t^T Z_t) = - Z_t^T ( dB_t + dB_t^T) Z_t + (1-2\mu +r) Z_t^T Z_t dt + 2 Z_t^T \kappa_{\mu}( I, (Z_t Z_t^T)^{-1}) Z_t  dt,
\end{equation}
which is now  rotation invariant. In particular, setting  $Z_t = V_t \Lambda_t^{1/2} U_t^T $ for orthogonal $U, V$ and 
$\Lambda_t$ the diagonal of square-singular values of $Z_t$, the right hand side of \eqref{ZZSDE}  equals
$$
   U_t \Bigl(  \Lambda_t^{1/2} d \mathcal{B}_t  \Lambda_t^{1/2} + (1-2\mu +r) \Lambda_t dt  + 
   2  \Lambda_t^{1/2} \kappa_{\mu}(I, \Lambda_t^{-1}) \Lambda_t^{1/2} dt \Bigr) U_t^T. 
$$
Here $\mathcal{B}_t = V_t^T ( B_t + B_t^T ) V_t $
is equal in law to twice a symmetric (or ``Dyson") Brownian motion, and we have used  that
$
    \kappa_{\mu}(I, U A U^T ) =  U {\kappa}_{\mu}(I, A) U^T,
$
for any symmetric $A$. From here the standard method used before will yield the system,
\begin{align}
\label{ZeigSDE}
  d \lambda_i  
  %& = 2 \lambda_i db_i +    [ (1+ 2\mu +r) \lambda_i + 2  \lambda_i [  {\kappa}_{-\mu}(I, \Lambda^{-1})]_{ii} 
 % + {2}\sum_{j \neq i } \frac{\lambda_i \lambda_j}{\lambda_i - \lambda_j} ] dt  \\ 
  & = 2 \lambda_i db_i +    [ (1- 2\mu +r) \lambda_i + 2  [  {\kappa}_{\mu}(\Lambda^{-1}, I)]_{ii} 
  + {2} \sum_{j \neq i } \frac{\lambda_i \lambda_j}{\lambda_i - \lambda_j} ] dt, 
\end{align}
having employed the identity $\Lambda^{1/2} \kappa_{\mu}(I, \Lambda^{-1}) \Lambda^{1/2}
   =  \kappa_{\mu}(\Lambda^{-1}, I)$ along the way.
Setting $z_i = \sqrt{\lambda_i}$ completes the proof.
\end{proof}

The proof of Theorem \ref{thm:Laplace} is now split into two parts.

\begin{proposition}\label{prop:Zasymp}
%2 \gamma is dimension discrepancy now 
Set $\mu = \frac{r-1}{2} +\gamma$ in \eqref{Xeigs_sde} and denote the maximal eigenvalue by $x_t^{\gamma}$. Then, as processes, $ \frac{1}{2c} \log x_{c^2 t}^{\gamma/c} $
converges as $c \rightarrow \infty$  to the Brownian motion with drift $-\gamma$ reflected at the origin. (The lower eigenvalues  converge to the zero process in  this scaling).
\end{proposition}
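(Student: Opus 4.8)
The plan is to work directly with the eigenvalue system \eqref{Xeigs_sde} after the time-and-drift rescaling, establish a stochastic sandwich, and then pass to the limit using soft compactness and identification of the limit law. First I would introduce the rescaled variables $\xi_i(t) = \frac{1}{2c} \log x_i^{\gamma/c}(c^2 t)$ for $i=1,\dots,r$ (so $\xi_1 \ge \xi_2 \ge \cdots \ge \xi_r$), and compute their It\^o differentials from \eqref{Xeigs_sde} with $\mu = \frac{r-1}{2} + \gamma/c$ and $x_i = e^{2c\xi_i}$. After the time change $t \mapsto c^2 t$ (which multiplies the generator by $c^2$) and applying It\^o's formula to $\frac{1}{2c}\log(\cdot)$, the diffusion coefficient of each $\xi_i$ becomes exactly $1$ (the $2x_i\,db_i$ term contributes $\frac{2x_i}{2c x_i}\,db_i$ at speed $c^2$, i.e.\ $db_i$ in rescaled time), and the drift of $\xi_i$ is
\[
  \tfrac{1}{2c^2 x_i}\cdot c^2 - \tfrac{1}{2c}\cdot c^2 \cdot \tfrac{2(2x_i)}{2(2c x_i)^2}\cdot\,\text{(quad.\ var.)} \; -\;\gamma\;+\;\Big(\tfrac{r-1}{2} - \tfrac{1}{2c}\Big)\cdot(\text{lower order}) \;+\; \tfrac{1}{2c}\sum_{j\neq i}\frac{x_i+x_j}{x_i-x_j},
\]
so that modulo $O(1/c)$ corrections the drift of $\xi_i$ is $-\gamma + \frac{1}{2c}\sum_{j\neq i}\coth\!\big(c(\xi_i - \xi_j)\big)\cdot(\text{sign factors})$ plus an $e^{-2c\xi_i}$ repulsion-from-zero term. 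I would make this bookkeeping precise; the upshot is the exact system
\[
  d\xi_i = db_i - \gamma\,dt + \tfrac{1}{2c}\,e^{-2c\xi_i}\,dt + \tfrac{1}{2c}\sum_{j\neq i}\frac{x_i+x_j}{x_i-x_j}\,dt + O(1/c)\,dt.
\]

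The second step is the sandwich for the top eigenvalue. Using the elementary bounds already exploited in the proof of Lemma \ref{normlemma} — namely $\sum_{j\neq 1}\frac{x_1+x_j}{x_1-x_j}\ge r-1$ for the largest index and the analogous upper bound $\le r-1 + 2\sum_{j\neq 1}\frac{x_j}{x_1-x_j}$ — together with $e^{-2c\xi_1}\ge 0$, I would show that $\xi_1(t)$ dominates the solution $\underline\xi$ of $d\underline\xi = db_1 - \gamma\,dt$ with an entrance/reflection behavior at $0$ coming from the $\frac{1}{2c}e^{-2c\xi_1}$ barrier, and is dominated by $\xi_1(t)$ driven by the same Brownian motion with the extra small drift $\frac{1}{2c}\sum_{j\ge 2}\frac{x_j}{x_1-x_j}\,dt$. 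The key control is that the spectral gap $\xi_1 - \xi_2$ stays bounded below (order $1$) on compact time sets, so that $\frac{x_j}{x_1 - x_j} = \frac{1}{e^{2c(\xi_1-\xi_j)}-1}$ is exponentially small in $c$; this follows from the same comparison argument used for $z_i$ in Lemma \ref{normlemma}, bounding $\xi_i - \xi_{i+1}$ below by an autonomous diffusion that does not hit $0$. Hence all the interaction drifts in the $\xi_1$ equation are $O(e^{-\delta c})$ uniformly on $[0,T]$.

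The third step is the limit passage. With the diffusion coefficient identically $1$, the drift uniformly bounded (by $|\gamma| + O(1/c)$ away from the origin) and the $e^{-2c\xi_1}$ term acting as an increasingly stiff barrier at $0$, I would invoke tightness in the Skorohod topology (Aldous' criterion, or simply that $\xi_1$ is within $o(1)$ of a reflected drifted Brownian motion) and identify every subsequential limit: by the martingale problem, any limit point $\xi_1^\infty$ has $\xi_1^\infty(t) - \xi_1^\infty(0) + \gamma\,\mathrm{sgn}(\xi_1^\infty(t))\cdot\,dt$... more precisely, one checks that $\xi_1^\infty(t) + \gamma t$ minus a local time at $0$ is a Brownian motion, i.e.\ $\xi_1^\infty$ solves the submartingale problem for $\tfrac12\frac{d^2}{dx^2} - \gamma\frac{d}{dx}$ on $(0,\infty)$ with Neumann condition at $0$ — precisely a Brownian motion with drift $-\gamma$ reflected at the origin, matching the first generator in \eqref{xzGenerators} with $\gamma \coth(\gamma\,\cdot)$ replaced by the $\gamma\,\mathrm{sgn}(\cdot)$ case. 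Standard uniqueness for this reflected SDE (Skorohod problem) then upgrades subsequential convergence to full convergence. Finally, for the lower eigenvalues $\xi_i$, $i\ge 2$: since $\frac{1}{2c}\sum_{j}\coth(\cdots)$ and the barrier term are $O(1/c)$ except near coincidences (which do not occur), and the net drift of $\xi_i$ relative to $\xi_{i-1}$ is strictly negative order $1$ divided by $c$... rather, one sees directly that $x_i^{\gamma/c}(c^2 t)$ stays bounded (it is dominated by a geometric-type process with negative Lyapunov exponent once rescaled), so $\xi_i = \frac{1}{2c}\log x_i \to 0$ in probability, locally uniformly.

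The main obstacle I anticipate is the boundary analysis at the origin for $\xi_1$: one must show that the stiff barrier $\frac{1}{2c}e^{-2c\xi_1}\,dt$ converges, in the scaling limit, to the local-time push of a reflecting (Neumann) boundary rather than producing an entrance-type or sticky behavior. This is the analog of the scalar statement in the text that ``the limiting $x$-process is reflected at the origin,'' and making it rigorous requires either a careful Tanaka/local-time estimate comparing $\int_0^t \frac{1}{2c}e^{-2c\xi_1(s)}\,ds$ with the symmetric local time $L_t^0(\xi_1)$, or a penalization/Skorohod-map argument showing the family $\{\xi_1^{(c)}\}$ is $C$-tight with all limit points solving the reflected SDE. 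The interaction-term control and the lower-eigenvalue collapse, by contrast, are routine given the comparison techniques already in place for Lemma \ref{normlemma}.
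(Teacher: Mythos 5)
Your proposal follows essentially the same route as the paper: pass to logarithmic coordinates, rescale, use the almost-sure separation of the eigenvalues (via the comparison argument of Lemma \ref{normlemma}) to reduce the top particle to a drifted Brownian motion plus an exponential penalization term, and then identify the limit of that penalization as the Skorohod reflection at the origin. The paper carries out this last step by exactly the second option you list: decomposing the path at the last visit below level $\epsilon$ to get uniform bounds on the penalization functional and on $\inf_s y_c(s)$, and then checking that every subsequential limit solves the Skorohod problem for $b_t^{-\gamma}$. One correction to your bookkeeping: after the time change $t\mapsto c^2t$ every $dt$-term acquires a factor $c^2$, so in the rescaled system the barrier and interaction terms carry coefficients of order $c$, not $1/c$ (compare \eqref{scaledXeigs}, where they appear as $\tfrac{c}{2}e^{-cy_i}$ and $c\sum_{j\neq i}e^{cy_j}/(e^{cy_i}-e^{cy_j})$); your displayed ``exact system'' is off by $c^2$ in those terms. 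This does not break your argument — the exponential eigenvalue separation you invoke is precisely what is needed to kill the genuinely $O(c)$ interaction, and the stiffer barrier still converges to a reflecting wall at $0$ — but as written the interaction would look misleadingly like an $O(1/c)$ perturbation that could be discarded without the separation estimate.
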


\begin{proof} Changing to logarithmic coordinates, $y_i = \log x_i$, and introducing the scaling as in $\gamma \mapsto \gamma/c $ (after putting $\mu = \frac{r-1}{2} + \gamma$)  and $y_i(t) \mapsto   y_i(c^2 t) /(2c)$, we can work with the 
system
\begin{equation}
\label{scaledXeigs} 
   d y_i = 2 db_i + \left( -  \gamma + \frac{c}{2} e^{-c y_i } +  c  \sum_{j \neq i} \frac{ e^{c y_j}}{e^{c y_i} -  e^{c y_j}}  \right) dt.
\end{equation}
Here the drift was simplified ahead of time by using  $-(r-1)+ \sum_{j \neq i} \frac{x_i + x_j}{x_i - x_j} =    \sum_{j \neq i} \frac{ 2  x_j}{x_i - x_j} $. 

On the other hand, back in the original coordinates we have that,
$$
     \pr \Bigl(  \lim_{t \rightarrow \infty} ( \log x_i(t) - \log x_j(t) ) = + \infty \Bigr) = 1,
$$
for any pair $i >j$. This follows again by the proof of Lemma \ref{normlemma}. From here we see that the top point in \eqref{scaledXeigs} shares whatever  $c \rightarrow \infty$ process limit it may have with that for 
$ y_c(t)$ defined by
\begin{equation}
\label{yc}
    {y}_c(t) =  b_t^{-\gamma}  +  L_c(t),  \qquad    L_c(t) =   \frac{c}{2}   \int_0^t e^{-c  {y}_c(s) }  ds, 
\end{equation}
and we want to show that $L_c(t)$ produces a local time contribution in the limit.

For  $\epsilon > 0$, decompose the path $t \mapsto y_c(t)$ at the time $s \le t$ at which it was last beneath level $\epsilon$ to
find that
$$
   y_c(t) \le \epsilon +  \max_{s\le t} (b_t^{-\gamma} - b_s^{-\gamma}) + \frac{c}{2} e^{-c \epsilon} t.
$$
Thus, 
$$
    L_c(t) \le  \epsilon +  \max_{s\le t} (b_t^{-\gamma} - b_s^{-\gamma}) + a_\epsilon t - b_t^{-\gamma},
$$
and  for each fixed $t$  it holds that $\sup_{c > 0} L_c(t) < \infty$ with probability one. Decomposing instead at the last time that the path 
exceeds $-\epsilon$ similar reasoning shows that $\liminf_{c \rightarrow \infty} $ $\inf_{0\le s \le t}$ $y_c(s) \ge 0$.   With both sequences bounded above and below, by passing to a subsequence if needed there exist $(y(t), L(t))$ with $y_c(t) \rightarrow y(t)$ and $L_c(t) \rightarrow L(t)$ at all but countably many $t$ for which $y(t) = b_t^{-\gamma} + L(t)$.  Since $\int_0^t \ind_{[\epsilon, \infty)} (y_c(s)) d L_c(s) \rightarrow 0$,  any limiting $L(t)$ is non-decreasing and increases only when $y(t)=0$. As $y(t) \ge 0$, we see that any such pair $(y(t), L(t))$ is the (unique) solution to the Skorohod problem for $b_t^{-\gamma}$. This precisely what we wanted to show.
\end{proof}

\begin{proposition} Now  set $\mu = \frac{r-1}{2} +\gamma$ in \eqref{Zeigs_sde} and denote by
 $z_t^{\gamma} $ the minimal singular value there. Then, again in the Skorohod topology,
$
   \lim_{c \rightarrow \infty} \frac{1}{c} \log z_{c^2 t}^{\gamma/c}  \Rightarrow r_t,
$
where $t \mapsto r_t$ is the diffusion on the positive half-line with generator
$ \frac{1}{2} \frac{d^2}{dr^2} + \gamma \coth (\gamma r) \frac{d}{dr}$.
\end{proposition}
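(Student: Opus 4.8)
The plan is to follow the argument behind Proposition \ref{prop:Zasymp} (its $X_t$–counterpart), the only genuinely new ingredient being the asymptotics of the matrix GIG mean appearing in the drift of \eqref{Zeigs_sde}. First I would pass to logarithmic variables $\zeta_i=\log z_i$, set $\mu=\frac{r-1}{2}+\gamma/c$, and rescale $\zeta_i(t)\mapsto\frac1c\zeta_i(c^2t)$; It\^o's formula turns the martingale part of the minimal coordinate into a standard Brownian motion, so the whole content sits in the drift. Using $\frac r2-\mu-\frac12=-\gamma/c$, the rescaled process $\eta_c(t):=\frac1c\log z_r(c^2t)$ satisfies
\[
 d\eta_c=d\tilde b+\Big(-\gamma+\tfrac{c}{z_r^2}\,[\kappa_\mu(\Lambda_z,I)]_{rr}+c\sum_{j\neq r}\tfrac{z_j^2}{z_r^2-z_j^2}\Big)\,dt ,
\]
the bracket being evaluated at time $c^2t$; I want to show this drift converges to $\gamma\coth(\gamma\eta_c)$.

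Next I would dispose of the interaction term. The comparison with one–dimensional diffusions used in the proof of Lemma \ref{normlemma} gives $\log z_j(t)-\log z_r(t)\to+\infty$ at a linear rate for every $j\neq r$, so after the time change $z_j^2/z_r^2$ is super-exponentially large; hence $\sum_{j\neq r}\big(\tfrac{z_j^2}{z_r^2-z_j^2}+1\big)$ is exponentially small and $c$ times it vanishes. (These same estimates show that the rescaled larger singular values run off to $+\infty$ at rate of order $c$, which is the remaining assertion of the theorem.) This reduces everything to proving
\[
 c\Big(\tfrac{1}{z_r^2}\,[\kappa_\mu(\Lambda_z,I)]_{rr}-(r-1)\Big)\longrightarrow\gamma\big(1+\coth(\gamma\eta)\big)
\]
in the regime where $\Lambda_z=\mathrm{diag}(z_1^{-2},\dots,z_r^{-2})$ has $z_r^{-2}=e^{-2c\eta}$ while its other entries tend to $0$ at strictly faster exponential rates, with $\mu=\frac{r-1}{2}+\gamma/c$.

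This GIG mean asymptotic is the crux, and the step I expect to be the main obstacle. Since $\kappa_\mu(A,I)$ is diagonal for diagonal $A$, with $[\kappa_\mu(A,I)]_{rr}=-2\,\partial_{A_{rr}}\log K_r(\mu\,|\,A,I)$, I would study $K_r(\mu\,|\,A,I)$ for diagonal $A$ having widely separated small entries by iterating the Cholesky change of variables $X=t^Tt$ from Step 1 of the proof of Theorem \ref{thm:MatrixBessel} (equivalently, by repeated Schur complements): each diagonal variable contributes a scalar $K$–Bessel (GIG normalizer) factor, and in this scale–separated regime the $r-1$ directions carrying the smaller parameters should decouple, leaving — up to an $A_{rr}$–independent prefactor — a one–dimensional quantity built from the Macdonald function of index $\gamma/c=\mu-\frac{r-1}{2}$ in the variable $z_r^{-1}$. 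Differentiating in $A_{rr}=z_r^{-2}$, the power of $z_r^{-2}$ generated by the decoupled directions should account exactly for the subtracted $(r-1)$, while the one–dimensional factor produces, after multiplying by $c$, the term $\gamma\big(1+\coth(\gamma\eta)\big)$ by the same mechanism as in the one–dimensional geometric Pitman theorem — the two–term small–argument expansion $K_{\gamma/c}(x)\sim\frac{c}{\gamma}\sinh\!\big(\frac{\gamma}{c}\log\frac2x\big)$, valid precisely because the index $\gamma/c\to0$, is what produces the $\sinh$, hence the $\coth$. Making this decoupling rigorous — uniform control, across the several exponential scales present, of the prefactor and of the error terms, together with the interchange with $\partial_{A_{rr}}$ — is where the real difficulty lies.

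Once the drift is identified the argument closes in the standard way: the drift of $\eta_c$ is, up to terms that vanish after rescaling, $\gamma\coth(\gamma\eta_c)$, which is locally bounded on $(0,\infty)$ and blows up like $1/(\gamma\eta)$ at the origin; tightness of $\{\eta_c\}$ in the Skorohod topology follows from controlling the drift on compact subsets of $(0,\infty)$ together with a Bessel–type comparison near $0$ preventing the paths from approaching (let alone crossing) the origin too quickly, and any subsequential limit solves the well-posed martingale problem for $\frac12\partial^2+\gamma\coth(\gamma r)\partial$. Since the physical process has $z_r(0)=0$, so that $\frac1c\log z^{\gamma/c}_{c^2t}\to0$ as $t\downarrow0$, the limit is this diffusion started from its entrance boundary at the origin, i.e.\ $r_t$, as claimed.
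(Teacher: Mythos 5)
Your outline follows essentially the same route as the paper: pass to logarithmic coordinates, use the separation of scales from Lemma \ref{normlemma} to freeze the interaction term and send the larger singular values off to infinity, reduce everything to the asymptotics of $[\kappa_\mu(\Lambda_z,I)]_{rr}$ in the regime where one entry of $\Lambda_z$ dominates, and close with the small-argument expansion $K_{\gamma/c}(x)\sim \frac{c}{\gamma}\sinh(\frac{\gamma}{c}\log\frac{2}{x})$ that produces the $\coth$ exactly as in Matsumoto--Yor. The one organizational difference is in how the GIG mean is analyzed: the paper writes $[\kappa_\mu(\Lambda,I)]_{ii}$ as a ratio of vector-index $K$-Bessel functions and applies Terras' induction formula, which peels off a single scalar Macdonald factor $K_1(\cdot\,|\,\lambda_1+\sum\lambda_ix_i^2,1)$ under an $(r-1)$-fold integral against a measure $\rho^{(i)}_\Lambda$ that concentrates at the origin; your iterated Cholesky/Schur-complement decomposition is the same mechanism run to completion. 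One point in your favor: your bookkeeping of the constant $(r-1)$ is more careful than the paper's. Starting from \eqref{Zeigs_sde} as written, the interaction term contributes $-c(r-1)$ after rescaling, and this must be cancelled by the subleading term $\frac{r-1}{A_{rr}}$ in $[\kappa_\mu]_{rr}=-2\partial_{A_{rr}}\log K_r(\mu\,|\,A,I)$ coming from the power of $A_{rr}$ generated by the decoupled directions --- exactly as you say (one can check this directly for $r=2$, where the off-diagonal Gaussian fluctuation contributes $1/A_{11}$ to the mean). The paper's displayed drift \eqref{lastZsde} absorbs this constant differently and its step (ii), read literally as a $(1+o(1))$ statement for the ratio, would miss it; so your reduction to $c(\tfrac{1}{z_r^2}[\kappa_\mu]_{rr}-(r-1))\to\gamma(1+\coth(\gamma\eta))$ is the correct target. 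Both your proposal and the paper leave the same technical core unproven --- uniform control of the decoupling errors across the several exponential scales --- so I regard your plan as a faithful (and in one respect sharper) version of the paper's argument rather than a different proof.
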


\begin{proof} In order to get a workable formula for the matrix GIG mean, we 
 bring in the more general  $K$-Bessel functions. For $\mathbf{s}=(s_1, \dots, s_r) \in \mathbb{C}^r$ recall the power
function $p_{\mathbf{s}}(X)$ from \eqref{powerfunction} and the invariant measure $\mu_r$ on $\mathcal{P}$ from \eqref{InvMeasure1}, and set
$$
   K_r( \mathbf{s} | A, B) = \frac{1}{2} \int_{\mathcal{P}} p_{\mathbf{s}}(X) e^{- \frac12 \Tr AX - \frac12 \Tr BX^{-1} } d\mu_r(X).
$$
This is actually how Terras introduces the $K$-Bessel functions from the start (see \S 4.2.2 of \cite{Terras}, though keep in mind our inclusion of various factors of $\frac12$ not used there), and reduces to our earlier defined $K_r(s | A, B)$ when $\mathbf{s} = (0,\dots, 0, s)$. 

In terms of the above we have: for any (positive) diagonal matrix $\Lambda = \mathrm{diag}(\lambda_1, \dots, \lambda_r)$,
\begin{equation}
\label{ZZmean}
       [ \kappa_{\mu} (\Lambda, I) ]_{11} = \frac{  K_r ( \mathbf{s} \, | \,  \Lambda, I)}{ K_r (  {\mathbf{s}}^{\prime} \, |  \,   \Lambda, I )},  
\end{equation}
where now $\mathbf{s} = (1, 0, \dots, 0, \mu)$ and  ${\mathbf{s}}^{\prime} = (0,0, \dots, 0, \mu)$. And likewise,
\begin{equation}
\label{ZZoffmean}
   [ \kappa_{\mu}(\Lambda, I) ]_{ii} =  \frac{  K_r ( \mathbf{s} \, | \,  \Lambda_{\sigma_i}, I)}{ K_r (  {\mathbf{s}}^{\prime} \, |  \,   \Lambda, I )},  
\end{equation}
in which $\Lambda_{\sigma_{i}}$ is the matrix arrived at from $\Lambda$ by swapping $\lambda_1$ and $\lambda_i$. 
This uses (again) that  $  \kappa_{\mu} (I,\Lambda) = U^T \kappa_{\mu} (I,U\Lambda U^T) U  $ for orthogonal $U$, here with the choice of $U$ being the corresponding permutation matrix.  Note that 
taking $U$ to be a diagonal orthogonal matrix with $\pm 1$ entries so that $U_{ii}U_{jj}=-1$ for given  $i\neq j$, yields  
$
[ \kappa_{\mu} (I,\Lambda) ]_{i,j}=-[ \kappa_{\mu} (I,\Lambda) ]_{i,j}
$,  so we get that $\kappa_{\mu} (I , \Lambda)$ is actually diagonal.

The ratios  
in \eqref{ZZmean} and \eqref{ZZoffmean} can then be expanded with the help of Terras' induction formula (see Exercise 20 of \cite[\S 4.2.2]{Terras} though note the typo, $\frac{m-n}{2}$ should be $\frac{n-m}{2}$). In the present setting this implies that
\begin{align}
\label{Kinduction}
   K_r( \mathbf{s} | \Lambda, I)  & = \int_{\R^{r-1}} K_1 \left(  \mu - \frac{r-3}{2}  \Bigl| \lambda_1 + \sum_{i=2}^{r} \lambda_i x_i^2 , 1 \right)  \\ 
   & \hspace{1.5cm} \times \, K_{r-1} 
      \left( {\mathbf{s}}^{\prime \prime} \Bigl|  \Lambda^{(1)},  I + xx^T  \right) dx_2 \dots dx_r,  \nonumber
\end{align}
with  $\Lambda^{(1)} = \mbox{diag}(\lambda_2, \dots, \lambda_r)$ and $\mathbf{s}^{\prime \prime}  = $
$(0,\dots, 0, \mu - \frac{1}{2}) \in \R^{(r-1)}$. Applied to  $K_r( \mathbf{s}' | \Lambda, I)$ the result is of course similar, the only difference that the  $K_1$ factor on the right hand side of \eqref{Kinduction} is replaced by 
$K_1 (  \mu - \frac{r-1}{2} \, | \, \lambda_1 + \sum_{i=2}^{r} \lambda_i x_i^2 , 1 )$.

Writing both occurrences of $K_1( \cdot \,  | \, \cdot, \, \cdot)$ in terms of the standard Macdonald function  we have that 
 $ K_1 (  \mu - \frac{r-3}{2}  \, |  \,\psi^2 , 1 )  =  
        \psi^{-{\mu} + \frac{r-3}{2}}  K_{ {\mu}  - \frac{r-3}{2}}  \left(   \psi  \right)  $
and  $K_1 (  \mu - \frac{r-1}{2}\, | \, \psi^2 , 1 ) = \psi^{ - {\mu} + \frac{r-1}{2}} K_{\mu- \frac{r-1}{2}} ( {\psi} ).$
So, with the shorthand,
$$
   \psi = \psi(x,\Lambda) =  \sqrt{\lambda_1 + \sum_{i=2}^{r} \lambda_i x_i^2}, \qquad \mathcal{K}(x, \Lambda^{(1)}) = 
     K_{r-1}  \left( \mathbf{s}^{\prime \prime} \Bigl|   \Lambda^{(1)}, I + x x^T  \right)
$$ 
we record the new expression for \eqref{ZZmean}: making the substitution  $\mu = \frac{r-1}{2} + \gamma$,
\begin{align}
\label{ZZmean1}
       [ \kappa_{\frac{r-1}{2} + \gamma}    (\Lambda, I) ]_{11} 
         =  &  \  \frac{ \int_{\R^{r-1}}  \psi^{- 1-\gamma }  K_{ 1+\gamma}  \left( \psi  \right) 
                                                               \mathcal{K}(x, \Lambda^{(1)}) dx }
           {  \int_{\R^{r-1}}  \psi^{ - \gamma}  K_{ \gamma}  \left( \psi  \right) 
                                                                \mathcal{K}( x, \Lambda^{(1)}) dx},
                                                                        \\                                                               
 %          =  & \,   \frac{1}{\sqrt{\lambda_1}}  \frac{ \int_{\R^{r-1}}  \psi_0^{- 1-\gamma }  K_{ \gamma+1}  \left(  \sqrt{\lambda_1} \psi_0  \right) 
 %                                                              \mathcal{K}( T_{\Lambda} x, \Lambda^{(1)}) dx }
 %          {  \int_{\R^{r-1}}  \psi_0^{ - \gamma}  K_{ \gamma}  \left(  \sqrt{\lambda_{1}} \psi_0  \right) 
  %                                                              \mathcal{K}( T_{\Lambda} x, \Lambda^{(1)}) dx}. \nonumber \\
           :=  &  \,     \frac{1}{\sqrt{\lambda_1}}  
                 \frac{ \int_{\R^{r-1}}  \psi_0^{-1}     K_{ 1+\gamma}  \left(  \sqrt{\lambda_1} \psi_0  \right)  \rho^{(1)}_{\Lambda} (d x)  }
                        { \int_{\R^{r-1}}   K_{ \gamma}  \left(  \sqrt{\lambda_1} \psi_0  \right)  \rho_{\Lambda}^{(1)} (d x) }.     \nonumber                               
\end{align}
In line two we have made the change of variables $T_{\Lambda}: x_i \mapsto  \sqrt{\frac{\lambda_1}{\lambda_i}} x_i$, and have introduced
\begin{equation}
\label{newvariables}
     \psi_0(x) = \sqrt{1  + \sum_{i=2}^{r}  x_i^2}, \qquad  \rho_\Lambda^{(1)}(dx) =
                             \psi_0(x)^{-\gamma} \mathcal{K}( T_{\Lambda} x, \Lambda^{(1)} )  dx.
\end{equation}
By way of  \eqref{ZZoffmean} there are allied expressions  for the other diagonal components of the mean.

Finally returning to \eqref{Zeigs_sde} and setting  $y_i = \log z_i$ that equation becomes
\begin{align}
\label{lastZsde}
   d y_i    =   db_i  \,  + \,  &   \left(  - \gamma    +  e^{-y_i}  \,
                 \frac{ \int_{\R^{r-1}}  \psi_0^{-1}     
                      K_{1+ \gamma}  \left(  e^{-y_i} \, \psi_0  \right)  \rho^{(i)}_{\Lambda} (d x)  }
              { \int_{\R^{r-1}}   K_{ \gamma}  \left( e^{-y_i} , \psi_0  \right)  \rho_{\Lambda}^{(i)} (d x) } \right) dt \\
              & \hspace{1cm}
              + \left( \frac{r-1}{2} + \frac{1}{2} \sum_{j\neq i} \frac{ e^{2 y_i} + e^{2 y_j}}{e^{2 y_i} - e^{2y_j}} \right) dt. 
                \nonumber
\end{align} 
Here $\Lambda$  is now the diagonal matrix  $\Lambda_{ii}  = e^{-2 y_i}$, and we have employed \eqref{ZZmean1} while being a little fluid with notation: $\rho_{\Lambda}^{(i)}$ stands for the the comparable object to $\rho_{\Lambda}^{(1)}$ defined in the same way as in \eqref{newvariables} but for the $i^{\mbox{th}}$ coordinate.

The strategy from this point is: 

\medskip

(i)  Show yet again a separation of scales. That is, for long time it holds that $ y_i \ll y_j $ for all $i < j$ with probability tending to one.  Without the presence of the Macdonald function term in the drift, the same calculation used in Lemma \ref{normlemma} would (yet again) show that the solution  of
\eqref{lastZsde} satisfies  $\frac{1}{t} \log y_i  \rightarrow (r-i)  - \gamma $ for all $i$ with probability one. The claim is that the added drift doesn't affect this appraisal too much
 
\medskip

(ii)  Show that
$$
     \frac{\int_{\R^{r-1}}    \psi_0^{-1} K_{1+ \gamma}  \left(e^{-y_i} \, \psi_0  \right)  \rho^{(1)}_{\Lambda} (d x)  }
              { \int_{\R^{r-1}}   K_{ \gamma}  \left( e^{-y_i} , \psi_0  \right)  \rho_{\Lambda}^{(1)} (d x) }  = \frac{ K_{1+\gamma}(e^{-y_1})}{ K_\gamma(e^{-y_1})} (1+ o(1))
$$
uniformly in $y_1$ as $y_2, \dots y_r  \rightarrow \infty$.     

\medskip

The estimate for (i) follows from known bounds for the $K$-Bessel function at $\infty$. For (ii), using the explicit formulas it is easy to see that the measure $\rho_{\Lambda}^1$ has Gaussian concentration at the point $x =0$ (where one notes  that $\psi_0 = 1$).

Put together, and after the required scaling, the drift in the  equation for $ y =  \frac{1}{c} y_1(c^2t, \gamma/c) $ equals 
$  - \gamma + e^{-c y} \frac{K_{1+\gamma/c} (e^{-cy})}{K_{\gamma/c}(e^{-cy})}$ up to vanishing errors as $c \rightarrow \infty$.  That is, we recover the same calculation needed by Matsumto-Yor in the one dimensional case, and so also the same limit.
 \end{proof}

\end{document}